\numberwithin{table}{section}
\theoremstyle{plain}
\newtheorem{theorem}{Theorem}[section]
\newtheorem{lemma}[theorem]{Lemma}
\newtheorem{corollary}[theorem]{Corollary}
\theoremstyle{definition} 
\newtheorem{definition}[theorem]{Definition}
\newtheorem{example}[theorem]{Example}
\renewcommand{\geq}{\geqslant}  
\renewcommand{\leq}{\leqslant}  
\newcommand{\nonsplit}[2]{#1\raisebox{0.6ex}{$\cdot$} #2}
\newenvironment{claimproof}[1]{\par\noindent\underline{Proof:}\space#1}{\hfill $\blacksquare$}
\newcommand{\AGL}{\mathrm{AGL}}
\newcommand{\Aut}{\textup{Aut}}
\newcommand{\GL}{\mathrm{GL}}
\newcommand{\GaL}{\Gamma\mathrm{L}}
\newcommand{\Out}{\textup{Out}}
\def\@adminfootnotes{%
  \let\@makefnmark\relax  \let\@thefnmark\relax
  \ifx\@empty\@date\else \@footnotetext{\@setdate}\fi
  \ifx\@empty\@subjclass\else \@footnotetext{\@setsubjclass}\fi
  \ifx\@empty\@keywords\else \@footnotetext{\@setkeywords}\fi
  \ifx\@empty\thankses\else \@footnotetext{%
    \def\par{\let\par\@par}\@setthanks}%
  \fi}\makeatother   
\begin{document}

\hyphenation{}

\title[Bounds for Finite Semiprimitive Permutation Groups]{Bounds for Finite Semiprimitive Permutation Groups: Order, Base Size, and Minimal Degree}
\author{Luke Morgan, Cheryl E. Praeger, Kyle Rosa}

\address[Morgan, Praeger, Rosa]{
Centre for Mathematics of Symmetry and Computation,
The University of Western Australia,
35 Stirling Highway,
Crawley 6009, Australia.\newline
 Email: {\tt Luke.Morgan@uwa.edu.au} \newline
 Email: {\tt Cheryl.Praeger@uwa.edu.au}; \newline
 Email: {\tt Kyle.Rosa@research.uwa.edu.au}
 }

\date{\today}

\begin{abstract}
In this paper we study finite semiprimitive permutation groups, that is, groups in which each normal subgroup is transitive or semiregular. We give bounds on the order, base size, minimal degree, fixity, and chief length of an arbitrary finite semiprimitive group in terms of its degree. To establish these bounds, we classify finite semiprimitive groups that induce the alternating or symmetric group on the set of orbits of an intransitive normal subgroup.
\end{abstract}

\maketitle
\begin{center}{{\sc\tiny MSC 2010 Classification: 20B15, 20H30, 20B05}}\end{center}


\section{Introduction}\label{S1}
Semiprimitive groups have been the topic of several recent papers, driven by their connection to the Weiss Conjecture in graph theory \cites{OGRPG, CSPGGR, LSGTW}, the structure of collapsing monoids \cite{BM}, and the fact that they naturally generalise the important classes of \textit{primitive}, \textit{quasiprimitive}, and \textit{innately transitive} groups (see Definition \ref{primdef}), each of which have been studied extensively in their own right. The \textit{O'Nan-Scott-like} classifications of primitive, quasiprimitive, and innately transitive groups partition these classes into \textit{O'Nan-Scott types} according to the action of minimal normal subgroups. Recently, Giudici and Morgan \cite{GM} established a similar classification for semiprimitive groups, and posed a series of open problems, several of which are answered here. 

Throughout the paper, we adopt the following notation and terminology:
\begin{enumerate}
\item Let $\Omega$ be a set. We denote by $S_\Omega$ the \textit{symmetric group} of all permutations on $\Omega$, and refer to a subgroup of $S_\Omega$ as a \textit{permutation group} on $\Omega$. For a group $G$ acting (possibly unfaithfully) on a set $\Omega$, we denote by $\omega{}g$ the image of $\omega\in\Omega$ under $g\in{}G$.
\item A permutation group $G\leq{}S_\Omega$ is \textit{semiregular} if only the identity of $G$ fixes a point in $\Omega$. 
\item A permutation group $G\leq{}S_\Omega$ is \textit{transitive} if for all $\alpha,\omega\in\Omega$, there exists at least one $g\in{}G$ for which $\alpha{}g=\omega$. 
\end{enumerate}

\subsection{Semiprimitive Groups}  
 
As in \cite{GM}, we make the following definition.

\begin{definition}
A transitive permutation group $G\leq{}S_\Omega$ is \textit{semiprimitive} if every normal subgroup of $G$ is  transitive or semiregular. 
\end{definition}
A useful characterisation of semiprimitivity is given by the following lemma.

\begin{lemma}[\cite{BM}*{Lemma $2.4$}]\label{SPPS}
A finite transitive permutation group is semiprimitive if and only if for every intransitive normal subgroup $N$ of $G$,  the kernel of the action of $G$ on the set $\Delta$ of $N$-orbits is $N$.
\end{lemma}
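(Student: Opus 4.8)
The plan is to prove the two implications separately. Throughout, for an intransitive normal subgroup $N$ the set $\Delta$ of $N$-orbits is a $G$-invariant partition of $\Omega$ into at least two blocks; writing $K_N$ for the kernel of the induced action of $G$ on $\Delta$, one always has $K_N\trianglelefteq G$ and $N\leq K_N$, since $N$ fixes each of its own orbits setwise.

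For the forward implication I would assume $G$ is semiprimitive and fix an intransitive normal subgroup $N$. As $K_N$ fixes each of the (at least two) $N$-orbits setwise it is itself intransitive, so semiprimitivity forces the normal subgroup $K_N$ to be semiregular. Fixing $\omega\in\Omega$ and writing $\Gamma=\omega N\in\Delta$, the inclusions $\Gamma=\omega N\subseteq\omega K_N\subseteq\Gamma$ (the last because $K_N$ preserves $\Gamma$ setwise) give $\omega K_N=\Gamma=\omega N$. Semiregularity yields $(K_N)_\omega=1$, so $|K_N|=|\omega K_N|=|\Gamma|$, while $N_\omega\leq (K_N)_\omega=1$ gives $|N|=|\omega N|=|\Gamma|$ as well. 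Hence $|N|=|K_N|$, and since $N\leq K_N$ we conclude $K_N=N$, as required.

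For the converse I would argue by contrapositive, first noting that the forward argument in fact proves more: \emph{whenever} $K_N$ is semiregular one automatically obtains $K_N=N$. The real content of the converse is therefore the opposite deduction, recovering semiregularity of a normal subgroup from the bare equality that its kernel on $\Delta$ equals itself. Assuming $G$ is not semiprimitive, I obtain a normal subgroup $M$ that is intransitive and not semiregular; if $K_M\neq M$ then $N=M$ is already the required witness, so the decisive case is $K_M=M$. Here I would attempt to produce an intransitive normal subgroup $N$ of $G$ with $N\lneq M$ but with exactly the same orbits as $M$, for then $\Delta_N=\Delta_M$ and so $K_N=K_M=M\supsetneq N$, contradicting the assumed condition. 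The natural candidates are the $G$-normal subgroups of $M$ generated by the nontrivial point stabilisers $M_\omega$, or by the pointwise block stabilisers $M_{(\Gamma')}$, which $G$ permutes among themselves.

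The step I expect to be the main obstacle — and the heart of the lemma — is exactly this last point: showing that a non-semiregular intransitive normal subgroup $M$ with $K_M=M$ must contain a proper $G$-normal subgroup that is still transitive on each $M$-orbit. Keeping such a characteristic subgroup simultaneously proper in $M$ and orbit-preserving is where the argument must do its real work, and I would expect to need a careful analysis of how $M$ acts on a single orbit $\Gamma$, together with the rigidity imposed by the hypothesis $K_M=M$, to force the point stabiliser $M_\omega$ to be trivial.
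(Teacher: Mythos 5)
Your forward implication is correct and complete: $K_N$ preserves each of the (at least two) $N$-orbits, hence is an intransitive normal subgroup of $G$, hence semiregular by semiprimitivity; combined with $\omega K_N=\omega N$ and $N\leq K_N$ this forces $|K_N|=|\omega N|=|N|$ and so $K_N=N$. (The paper offers no proof of this lemma to compare against --- it is quoted from \cite{BM} --- so your argument has to be judged on its own terms.)

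The converse, which you candidly leave open, is where the real problem lies, and the gap is not one you could have closed: the ``if'' direction is false as stated. Take $G=A_5\times C_2$ acting on $\Omega=\{1,\dots,5\}\times\{a,b\}$ in the product action. Its intransitive normal subgroups are $1$, $1\times C_2$ and $A_5\times 1$, and each one is exactly the kernel of the action of $G$ on its own orbit set (the singletons, the five pairs $\{(i,a),(i,b)\}$, and the two blocks $\{1,\dots,5\}\times\{x\}$, respectively); yet $A_5\times 1$ is intransitive and far from semiregular, so $G$ is not semiprimitive. The same example shows concretely why your proposed reduction must fail: $M=A_5\times1$ has no nontrivial proper subgroup that is normal in $G$, so no witness $N\lneq M$ with the same orbits as $M$ can exist. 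The criterion that does characterise semiprimitivity --- and, I believe, the one actually proved in \cite{BM} --- replaces ``the kernel is $N$'' by ``the kernel is semiregular'': with that wording the converse is immediate from $N\leq K_N$ (a subgroup of a semiregular group is semiregular and $M\le K_M$ for every intransitive normal $M$), and your own observation that semiregularity of $K_N$ already forces $K_N=N$ shows the forward directions of the two formulations coincide. Since the paper only ever invokes the ``only if'' direction, nothing downstream is affected, but the statement as transcribed cannot be proved.
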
 

In this paper, we study finite semiprimitive groups, and bound several important group-theoretic quantities for this family. A key feature of these groups is their collection of \textit{antiplinths}.

\begin{definition}
Let $G$ be a permutation group. An \textit{antiplinth} $N$ of $G$ is a subgroup $N$ of $G$ that is maximal by inclusion among all intransitive normal subgroups of $G$.
\end{definition}

The term antiplinth was invented as a pun on the term \textit{plinth}, defined in \cite{GM}*{Definition $3.4$} to be a minimally transitive normal subgroup. An antiplinth, being maximal among intransitive normal subgroups, is in some sense the opposite. The importance of this concept to the study of semiprimitive groups, and its appearance in Theorem \ref{bounds}, is explained by the following lemma.

\begin{lemma}
\label{SMAQP}
Let $G\leq{}S_\Omega{}$ be a semiprimitive permutation group with antiplinth $N$, and let $\Delta$ be the set of $N$-orbits in $\Omega$. Then $G/N$ acts faithfully and quasiprimitively on $\Delta$.
\end{lemma}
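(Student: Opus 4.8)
The plan is to establish three things in turn: that the action of $G$ on $\Delta$ factors through $G/N$, that this induced action is faithful, and that it is quasiprimitive. Since $N$ is normal in $G$, the group $G$ permutes the $N$-orbits, so $G$ does act on $\Delta$; and because $N$ fixes every $N$-orbit setwise, $N$ lies in the kernel of this action, so the action factors through $G/N$. For faithfulness I would invoke Lemma \ref{SPPS} directly. As an antiplinth, $N$ is an intransitive normal subgroup of $G$, so the stated characterisation of semiprimitivity tells us that the kernel of the action of $G$ on the set $\Delta$ of $N$-orbits is exactly $N$. Hence $G/N$ acts faithfully on $\Delta$. This is precisely where the semiprimitivity hypothesis does its real work, and it is essentially handed to us by Lemma \ref{SPPS}.

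For quasiprimitivity, first observe that since $G$ is transitive on $\Omega$ it is transitive on the block system $\Delta$, so $G/N$ is transitive; note also that $N$ is intransitive, so $|\Delta| > 1$ and the question of quasiprimitivity is genuine. It then remains to show that every nontrivial normal subgroup of $G/N$ is transitive on $\Delta$. By the correspondence theorem, such a subgroup has the form $M/N$ where $M$ is normal in $G$ and $N < M$. Here I would use the defining property of the antiplinth: $N$ is maximal by inclusion among intransitive normal subgroups of $G$, so a normal subgroup of $G$ strictly containing $N$ cannot be intransitive, and therefore $M$ is transitive on $\Omega$. Since $N \le M$, the $N$-orbits form a block system for $M$, and transitivity of $M$ on $\Omega$ forces transitivity of $M/N$ on $\Delta$. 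As $M/N$ was an arbitrary nontrivial normal subgroup of $G/N$, this establishes quasiprimitivity.

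The argument is short and each step is routine once the tools are lined up, so I do not expect a serious obstacle; the substantive content, namely that the kernel on $\Delta$ is precisely $N$, is already supplied by Lemma \ref{SPPS}. The one point that needs genuine care is the bookkeeping among the three groups $N \le M \le G$ and their images modulo $N$: one must confirm that the correspondence theorem identifies the nontrivial normal subgroups of $G/N$ exactly with the normal subgroups of $G$ properly containing $N$, and that the $N$-orbit block structure is respected so that transitivity transfers cleanly from $M$ on $\Omega$ to $M/N$ on $\Delta$. Everything else is a direct application of the antiplinth maximality together with Lemma \ref{SPPS}.
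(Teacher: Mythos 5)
Your proof is correct: the paper states Lemma \ref{SMAQP} without proof, and your argument is exactly the intended one — faithfulness comes straight from Lemma \ref{SPPS} applied to the intransitive normal subgroup $N$, and quasiprimitivity comes from the correspondence theorem together with the maximality of the antiplinth, which forces any normal subgroup of $G$ properly containing $N$ to be transitive on $\Omega$ and hence on the block system $\Delta$. No gaps.
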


\subsection{Bounds}
The \textit{order} $|G|$ of a group $G$ is the number of elements in it. The question of bounding this quantity for a class of groups satisfying a given property is very natural, and bounds on the orders of various kinds of permutation groups have been sought for more than a century. In fact this problem for primitive groups was the theme of the 1860 \emph{Grand Prix de Math\'ematiques} announced in 1857 by the Paris Academy (see, for example, the essay of Peter M.~Neumann in \cite[pp.3--4]{PMN}).

The base size $b(G)$, minimal degree $m(G)$, and fixed point ratio $fpr(G)$ are each important in computing applications, and also yield results about $|G|$. The \textit{chief length} of a group is an important theoretical invariant, and our work here furthers prior work on the \textit{composition length} of such groups in \cite{Me}. We give the formal definitions of each of these quantities in Definitions \ref{defc} and \ref{defb}. Our main result about semiprimitive groups is the following, answering Problems $1$, $3$, and $4a$ of \cite{GM}. 

\begin{theorem}\label{bounds}
Let $G\leq{S_\Omega}$ be a semiprimitive permutation group of degree $n$ such that $A_\Omega \nleqslant G$. The following hold:
\begin{enumerate}
	\item The order of $G$ satisfies $|G|<4^n$.
	\item There is a constant $n_1$ such that if $n\geq{}n_1$, then  $b(G)\leq{}4\sqrt{n}\log(n)$. For such groups we have $|G|\leq{}2^{4\sqrt{n}(\log_2n)^2}$.
    \item The minimun degree satisfies $m(G)\geq{}(\sqrt{n}-1)/2$.
    \item Either, for some antiplinth $M$ of $G$, the socle of $G/M$ is a product of alternating groups, or $\text{fpr}(G)\leq{}4/7$.
    \item The chief length of $G$ satisfies $l(G)\leq{}2\log_2n$.
\end{enumerate}
\end{theorem}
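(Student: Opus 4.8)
The plan is to reduce all five bounds to corresponding bounds for the faithful quasiprimitive quotient supplied by Lemma~\ref{SMAQP}. Fix an antiplinth $N$ of $G$ and let $\Delta$ be the set of $N$-orbits, with $|\Delta|=m$. Since $N$ is intransitive and $G$ is semiprimitive, $N$ is semiregular; as $G$ is transitive it permutes the $N$-orbits transitively, so these orbits share a common length $k$, and semiregularity forces $N$ to act regularly on each. Hence $|N|=k$ and $n=mk$, while $\bar G:=G/N$ is a faithful quasiprimitive group on $\Delta$ of degree $m$. This is the decomposition on which everything rests: a ``small'' semiregular bottom $N$ of order $k$ and a quasiprimitive top $\bar G$ of degree $m$.

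The second step is to record how each invariant of $G$ transfers to $\bar G$. Writing $\bar g$ for the image of $g$, I would prove the elementary transfer inequalities
\begin{align*}
|G| &= k\,|\bar G|, & b(G) &\leq b(\bar G), & m(G) &\geq k\,m(\bar G),\\
\mathrm{fpr}(G) &\leq \mathrm{fpr}(\bar G), & l(G) &\leq \log_2 k + l(\bar G). &&
\end{align*}
The order identity is immediate. For the base size, lift a minimal base of $\bar G$ to one point per chosen orbit: the pointwise stabiliser of these points lies in $N$ and fixes a point, hence is trivial by semiregularity, so these $b(\bar G)$ points already form a base of $G$. For minimal degree and fixity, note that every nonidentity element of $N$ is fixed-point-free, while for $g\notin N$ one has $\bar g\neq 1$; each $N$-orbit moved by $\bar g$ contributes $k$ moved points of $g$ (giving $m(G)\geq k\,m(\bar G)$), and only the at most $\mathrm{fpr}(\bar G)\,m$ orbits fixed by $\bar g$ can contain fixed points of $g$ (giving $\mathrm{fpr}(G)\leq\mathrm{fpr}(\bar G)$). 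The chief-length inequality follows by refining a chief series of $G$ through $N$: the factors below $N$ number at most $\log_2|N|=\log_2 k$, and those above form a chief series of $\bar G$.

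With these in hand, the third step applies the established bounds for quasiprimitive groups of degree $m$ not containing $A_m$, namely $|\bar G|<4^m$, $b(\bar G)\leq 4\sqrt m\,\log_2 m$ for $m$ large, $m(\bar G)\geq(\sqrt m-1)/2$, the dichotomy that $\mathrm{fpr}(\bar G)\leq 4/7$ unless the socle is a product of alternating groups, and $l(\bar G)\leq 2\log_2 m$; then I discharge the arithmetic. For instance $|G|=k\,|\bar G|<k\,4^m\leq 4^{mk}=4^n$ since $k\leq 4^{m(k-1)}$; the base-size estimate gives $b(G)\leq b(\bar G)\leq 4\sqrt m\,\log_2 m\leq 4\sqrt n\,\log_2 n$ once $n\geq n_1$, whence $|G|\leq n^{b(G)}\leq 2^{4\sqrt n(\log_2 n)^2}$; and $l(G)\leq\log_2 k+2\log_2 m\leq 2\log_2(mk)=2\log_2 n$. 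The minimal-degree and fixity statements transfer in the same way, the only care needed being elementary comparisons (using, for the finitely many small $(m,k)$ where the weak estimate $k(\sqrt m-1)/2\geq(\sqrt{mk}-1)/2$ fails, the genuinely larger minimal degree of a small-degree quasiprimitive quotient).

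The main obstacle is the excluded case, where $\bar G=G/M$ contains $A_m$ for every antiplinth $M$, since then the quasiprimitive order, base-size and minimal-degree bounds all fail for $\bar G$. Here I would invoke the classification (the technical core of the paper) of semiprimitive groups inducing $A_m$ or $S_m$ on the orbits of an intransitive normal subgroup, and then verify the five bounds directly on the resulting explicit list. The key phenomenon is that semiprimitivity together with faithfulness severely restricts which extensions $N.(A_m\text{ or }S_m)$ occur as permutation groups of degree $mk$: the naive large examples do not exist, since, for instance, a faithful action of a proper cover such as $2.A_m$ on $2m$ points would require a point stabiliser that is a complement to the centre in the preimage of $A_{m-1}$, which the non-split cover does not possess. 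Running through the classification one checks that the members with $A_\Omega\nleqslant G$ satisfy all five bounds, that the fixed-point-ratio alternative in part~(4) is exactly the socle-a-product-of-alternating-groups case, and that the hypothesis $A_\Omega\nleqslant G$ is precisely what removes the genuinely unbounded groups $G=A_n,S_n$ (where $M=1$ and $m=n$).
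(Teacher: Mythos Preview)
Your overall architecture---pass to the quasiprimitive quotient $G/N$ via Lemma~\ref{SMAQP}, establish the five transfer inequalities, apply the known quasiprimitive bounds, and treat the exceptional case $G/N\geq A_\Delta$ via the classification---is exactly the paper's approach, and your transfer inequalities are correct and match Lemmas~\ref{bpb} and~\ref{medlem}. Two points, however, need attention.

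First, the classification you invoke (Theorem~\ref{classcor}) is stated only for a \emph{minimal normal} subgroup $M$, not for an arbitrary antiplinth. An antiplinth need not be minimal normal, so you cannot apply the classification directly to your chosen $N$. The paper handles this by an extra reduction layer that you omit: when the antiplinth $M$ is not minimal normal, choose $N\lhd G$ maximal among normal subgroups properly contained in $M$; then $M/N$ is a minimal normal antiplinth of the semiprimitive group $G/N$ on $\Omega/N$, the classification applies to $G/N$, and the bounds then lift back to $G$ via your transfer inequalities (see the final paragraphs of the proofs of Theorems~\ref{othm}, \ref{bthm}, \ref{mthm}). Without this step your argument has a genuine gap in the excluded case.

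Second, the ``explicit list'' produced by the classification is not finite: alongside the seven sporadic groups it contains infinite families (those with $d\geq r-2$, or with $T\gtrsim A_{r-1}$). So ``verify the five bounds directly on the resulting explicit list'' is not a finite check; one needs the arithmetic Lemmas~\ref{indlem} and~\ref{numlem2} to show that in these families $|M|$ is large enough relative to $r$ that $|M|\cdot r!<4^{|M|r}$, $r-1<4\sqrt{n}\log_2 n$, and $|M|(\sqrt r-1)/2\geq(\sqrt n-1)/2$. Finally, note that the cited fixed-point-ratio bound (Theorem~\ref{gmf}(4)) is only stated for \emph{primitive} groups; the paper extends it to the quasiprimitive quotient by passing to a maximal block system inside the proof of Theorem~\ref{fthm}, a step your sketch should also include.
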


The result for minimal degree is particularly interesting, since by \cite{KPS}, groups with large minimal degree are \textit{indistinguishable}, in the sense that they prevent certain quantum algorithms from solving the \textit{hidden subgroup problem} efficiently. 

\begin{corollary}
Semiprimitive groups are \textit{indistinguishable}.
\end{corollary}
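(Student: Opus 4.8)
The plan is to read this off from part~(3) of Theorem~\ref{bounds} together with the quantum-computing criterion of \cite{KPS}, since the corollary is essentially a one-line deduction once both ingredients are in place. First I would pin down the precise meaning of \emph{indistinguishable}: following \cite{KPS}, a family of transitive permutation groups is indistinguishable for the hidden subgroup problem (that is, quantum Fourier sampling cannot efficiently recover a hidden subgroup) as soon as the minimal degree $m(G)$ of each member is bounded below by a fixed positive power of the degree $n$. The task then reduces to checking that semiprimitive groups satisfy such a lower bound.

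Second, I would invoke Theorem~\ref{bounds}(3): every semiprimitive $G \leqslant S_\Omega$ of degree $n$ with $A_\Omega \nleqslant G$ satisfies $m(G) \geqslant (\sqrt{n}-1)/2$. Since $(\sqrt{n}-1)/2$ grows like $n^{1/2}$, this meets the threshold of \cite{KPS} with exponent $1/2$, and indistinguishability follows at once.

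The main obstacle is not group-theoretic but a matter of bookkeeping: one must match the explicit bound $(\sqrt{n}-1)/2$ against the exact quantitative hypothesis imposed in \cite{KPS}, confirming that whatever threshold is demanded there (be it $m(G) \geqslant n^{c}$ for some specific $c \leqslant 1/2$, or a condition phrased through $m(G)/n$ or $\log n$) is genuinely implied by growth of order $\sqrt{n}$. A secondary point is the hypothesis $A_\Omega \nleqslant G$ built into Theorem~\ref{bounds}(3): the semiprimitive groups containing $A_\Omega$ are exactly $A_\Omega$ and $S_\Omega$ in their natural action, which have bounded minimal degree and so lie outside the minimal-degree argument. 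These should be acknowledged separately, either by restricting the statement to the groups covered by Theorem~\ref{bounds}, or by remarking that the hidden subgroup problem over the symmetric group is itself a notorious hard case and so does not threaten the indistinguishability of the family.
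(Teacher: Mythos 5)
Your proposal is correct and follows essentially the same route as the paper, whose proof is the one-line deduction ``this follows from Theorem \ref{mthm} and \cite{KPS}*{Theorem C}''; your extra care in matching the bound $(\sqrt{n}-1)/2$ against the precise quantitative threshold of \cite{KPS}, and in noting that the excluded groups containing $A_\Omega$ must be handled separately, only makes explicit what the paper leaves implicit. No substantive difference in approach.
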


Note that the composition length of finite semiprimitive groups has been bounded previously by Glasby et al.~in \cite{Me}. For the present paper, we chose to generalise results that do not rely on the Classification of Finite Simple Groups (CFSG), and as such sharper bounds are possible for some of these quantities. In several cases, similar arguments to the ones used here also work to generalise the stronger results. 
For example, Liebeck and Saxl  \cite[Corollary 3]{bettermed} establish a bound for $m(G)$ for primitive groups $G$ that is a factor of four better than the function in Theorem \ref{bounds}$(3)$. The same bound can also be shown to hold for all semiprimitive groups, except $3.S_6$ acting on 18 points and the full alternating and symmetric groups.

\subsection{Classification} In order to establish the bounds in Theorem \ref{bounds}, we require knowledge of finite semiprimitive groups that induce the alternating or symmetric groups on the set of orbits of an intransitive normal subgroup. These are exactly the groups which cause problems for reductions. The following theorem provides a classification of such groups.

\begin{theorem}\label{classcor}
Let $G\leq{}S_\Omega$ be a semiprimitive group with a minimal normal subgroup $M\cong{}T^d$, where $T$ is a simple group. Let $\Delta$ be the set of $M$-orbits in $\Omega$, and let $r:=|\Delta|$ be the size of $\Delta$. If $G^\Delta\geq{}A_\Delta$ and $r\geq{}5$, then at least one of the following holds:
\begin{enumerate}
    \item $d\geq{}r-2$, 
    \item $T\gtrsim{}A_{r-1}$, or
    \item one of the following holds:\begin{enumerate}
    		\item $|\Omega|=2^4\times{}7$, and $G\cong{}C_2^4\rtimes{}A_7$, 
            \item $|\Omega|=2^4\times{}8$, and $G\cong{}\AGL_4(2)$ or $\nonsplit{C_2^4}{A_8}$, 
            \item $|\Omega|=3\times{}6$, and $G\cong{}\nonsplit{C_3}{A_6}$ or $(\nonsplit{C_3}{A_6})\rtimes{}C_2$, or 
            \item $|\Omega|=3\times{}5=4^2-1$, and $G\cong{}\GL_2(4)$ or ${\GaL_2(4)}$. 
    \end{enumerate}
\end{enumerate} 
\end{theorem}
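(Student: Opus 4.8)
The plan is to use Lemma~\ref{SPPS} to pin down the quotient $G/M$ exactly, and then to split the analysis according to whether the simple group $T$ is abelian or not. Since $r\geq5>1$, the minimal normal subgroup $M\cong T^d$ is intransitive, hence semiregular by semiprimitivity; thus each $M$-orbit is regular of size $|M|=|T|^d$ and $|\Omega|=r|T|^d$. By Lemma~\ref{SPPS} the kernel of the action of $G$ on $\Delta$ is exactly $M$, so $G/M\cong G^\Delta$, and as $G^\Delta\geq A_\Delta$ with $|\Delta|=r$ we get $G/M\cong A_r$ or $S_r$ in its natural action on $r$ points. A point stabiliser $H=G_\omega$ meets $M$ trivially, so $HM/M$ is the natural block stabiliser $(G/M)_\delta$ of index $r$, giving $H\cong A_{r-1}$ or $S_{r-1}$. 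Finally, minimality of $M$ means $G$ permutes the $d$ simple factors $T_1,\dots,T_d$ transitively, yielding a transitive homomorphism $\pi\colon G/M\to S_d$.

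\emph{The nonabelian case ($T$ nonabelian simple).} If $\pi$ is faithful then $A_r$ embeds transitively into $S_d$; since the minimal degree of a faithful permutation representation of $A_r$ or $S_r$ is $r$ for $r\geq5$, this forces $d\geq r$ and hence conclusion~(1). Otherwise $\ker\pi\neq1$; as the only normal subgroups of $A_r$ and $S_r$ ($r\geq5$) are $1$, $A_r$, $S_r$, the kernel contains $A_r$, the image in $S_d$ has order at most $2$, and transitivity forces $d\in\{1,2\}$. I would then work with $C:=C_G(T_1)$, which meets $M$ in $Z(T_1)=1$ and so embeds into $G/M$. Semiprimitivity gives two branches. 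If $H\cap C=1$, then $H$ acts faithfully by conjugation on the block $\omega M$, so the simple group $A_{r-1}$ (the derived subgroup of $H$) embeds in $\Aut(T_1)$; composing with the map to the soluble group $\Out(T_1)$ shows $A_{r-1}\leq\mathrm{Inn}(T_1)\cong T$, which is conclusion~(2). If instead $H\leq C$, then $C_\omega\supseteq H\neq1$, so $C$ is not semiregular and hence transitive; but then $|C|=|\Omega|\,|H|=|T|\cdot|A_r|$, which exceeds $|G/M|$ because $|T|\geq 60$, contradicting $C\hookrightarrow G/M$. The case $d=2$ is entirely analogous, and the value $r=5$ (where $A_{r-1}=A_4$ is soluble) is treated directly.

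\emph{The abelian case ($T\cong C_p$).} Now $M=V$ is a nontrivial irreducible $\F_p$-module for $\overline G:=G/C_G(V)$, a quotient of $G/M$, so $\overline G\in\{A_r,S_r,C_2,1\}$ and $\overline G\in\{C_2,1\}$ exactly when $d=1$. I would invoke the classical determination---independent of CFSG---of the minimal dimension of a nontrivial irreducible $\F_p$-representation of $A_r$ and $S_r$: it equals $r-2$ when $p\mid r$ and $r-1$ otherwise, together with a short explicit list of small exceptions coming from isomorphisms such as $A_5\cong\PSL_2(4)$ and $A_8\cong\GL_4(2)$. This gives $d\geq r-2$, namely conclusion~(1), unless $(r,p,d)$ is exceptional. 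The remaining task is to decide, for each exceptional module and for the central case $d=1$, which extensions $G$ actually occur and are semiprimitive: one must combine an $H^2$-computation (Schur multipliers) distinguishing split from non-split extensions with the semiregularity of $M$---equivalent to the extension splitting over $H\cong A_{r-1}$ or $S_{r-1}$, hence controlled by $H^1(H,V)$ and by the abelianisation of $H$---and with the demand that the ``alternating part'' of $G$ be transitive or semiregular. It is this last demand that eliminates the split products $C_p\times A_r$ for $r\geq6$ and every cover $2.A_r$, while leaving exactly $C_2^4\rtimes A_7$, the pair $\AGL_4(2)$ and $\nonsplit{C_2^4}{A_8}$ from the natural $\GL_4(2)\cong A_8$ module, the cover $\nonsplit{C_3}{A_6}$ (surviving because $3\mid|\mathrm{Mult}(A_6)|$ while $3\nmid|\mathrm{Mult}(A_5)|$), and $\GL_2(4),\GaL_2(4)$ (arising since $A_4$ has a quotient $C_3$)---precisely the groups of (3)(a)--(d).

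I expect the main obstacle to be exactly this final bookkeeping in the abelian case: assembling the representation-theoretic dimension bound, the cohomological split/non-split dichotomy, and the semiprimitivity constraints into a proof that the list of surviving small configurations is \emph{exactly} (3)(a)--(d), with nothing omitted and nothing spurious; the nonabelian analysis, by contrast, I expect to be comparatively routine once the centraliser $C_G(T_1)$ is brought in.
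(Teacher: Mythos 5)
Your skeleton is genuinely different from the paper's and much of it is viable: you organise everything around the point stabiliser $G_\omega\cong A_{r-1}$ or $S_{r-1}$ and its conjugation action on a block, where the paper instead works with $H=C_G(M)$ and its derived subgroup, splitting into the four cases of Lemma~\ref{trilema} and using the sub-Cartesian decomposition (Lemmas~\ref{subcartesian} and~\ref{tricor}) to push a copy of $A_{r-1}$ into $T$. Your key inputs (semiregularity of $M$, $G/M\cong A_r$ or $S_r$, Wagner's bound $d\geq r-2$ with its two exceptions, the Schur multiplier computation, the elimination of the double covers) are the same as ours. However, there are concrete gaps in the nonabelian branch. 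First, for $d=2$ your starting point $C_G(T_1)\cap M=Z(T_1)=1$ is false, since $C_M(T_1)=T_2\neq1$; ``entirely analogous'' does not survive this, and one must instead use $C_G(M)$ or a projection argument as in Lemma~\ref{tricor}. Second, your dichotomy ``$H\cap C=1$ or $H\leq C$'' omits $H\cap C\cong A_{r-1}<H\cong S_{r-1}$ (the same order count handles it, but it must be said). Third, for $r=5$ the Schreier step genuinely fails: $A_4$ is soluble, and $A_4\leq\Aut(T)$ does not force $A_4\leq T$ (consider $\Out$ of order divisible by $3$). The repair --- showing $C_G(M)\cong A_5$ or $S_5$ is nontrivial and semiregular, that $MC_G(M)$ is transitive, and that $(MC_G(M))_\omega\cong C_\delta\cong A_4$ or $S_4$ embeds in $T$ via a coordinate projection --- is precisely the sub-Cartesian machinery you have bypassed, so ``treated directly'' conceals a real step rather than a routine check.

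The larger gap is the abelian case, which is where every group in conclusion (3) lives and which you yourself flag as unfinished. Two specific points: (i) semiregularity of $M$ amounts to the existence of a subgroup of $G$ mapping isomorphically onto a point stabiliser of $G^\Delta$, i.e.\ to the splitting of $G_\delta=M.A_{r-1}$ over $M$; for central $M$ this is governed by the restriction map $\mathbb{M}(A_r)\to\mathbb{M}(A_{r-1})$ (equivalently the restriction of the class in $H^2$), not by $H^1(G_\omega,M)$, which only counts complements once one exists --- this is exactly how Lemma~\ref{no2n} kills $\nonsplit{C_2}{A_r}$ and how $\nonsplit{C_3}{A_7}$ dies while $\nonsplit{C_3}{A_6}$ survives; (ii) when $d=1$ and $G$ acts on $M$ through $C_2$, you must pass to the index-two subgroup $C_G(M)$ and observe that its derived group ($\cong A_r$ or $\nonsplit{C_p}{A_r}$) is characteristic, hence normal in $G$ and therefore transitive or semiregular; it is this step that forces $r=5$, $p=3$ in the split case (an index-$p$ subgroup of $A_{r-1}$ exists only for $A_4$) and yields $\GL_2(4)$ and $\GaL_2(4)$. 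You name the right mechanisms, but the verification that the surviving list is exactly (3)(a)--(d) is asserted rather than proved, so as it stands the proposal establishes the nonabelian half (for $r\geq6$, $d=1$) and a plan, not a proof, for the rest.
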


\section{Preliminaries}\label{S2}

In this section we will collect the foundational material for the proofs of Theorems \ref{classcor} and \ref{bounds}. This includes all relevant definitions, theorems from the literature, and proofs for lemmas which we will later need.

\subsection{Group Theoretic Quantities}

We introduce the definitions of the group theoretic terms that we are primarily interested in, and explain the quantities that we will later bound.

\begin{definition}\label{defo}\label{defc}
Let $G$ be a finite group.
\begin{enumerate}
\item The \textit{socle} soc$(G)$ of $G$ is the subgroup of $G$ generated by the set of all minimal normal subgroups of $G$.
\item A \textit{chief series} $\{N_i\}$ for $G$ is a set of normal subgroups of $G$ with the property that, for all $i$, we have $N_i\leq{}N_{i+1}$, and $N_{i+1}/N_i$ is a minimal normal subgroup of $G/N_i$.
\item The \textit{chief length} $l(G)$ of $G$ is the length of a chief series of $G$.
\end{enumerate}
\end{definition}

By the Jordan-H\"{o}lder Theorem, the chief length of a group $G$ depends only on the isomorphism type of $G$, and is independent of the chosen series. If $G\leq{}S_\Omega$ is a permutation group, we define its stabiliser subgroups, bases, and base size. We  also define the degree and fixity of its elements, and in turn the fixed point ratio and minimal degree of $G$.

\begin{definition}\label{defb}\label{deffix}\label{deffpr}\label{defm} 
Let $G\leq{}S_\Omega$ be a finite permutation group, $\omega$ a point in $\Omega$, $\delta$ a subset of $\Omega$, and $g$ a nontrivial element of $G$.
\begin{enumerate}
\item The \textit{stabiliser} $G_{\omega}$ of $\omega$ is the subgroup $\{g\in{}G\mid{}\omega{}g=\omega\}$ of $G$ of elements which fix $\omega$.
\item The \textit{setwise stabiliser} $G_{\delta}$ of $\delta$ is the subgroup $\{g\in{}G\mid{}\delta{}g=\delta\}$ of $G$ of elements which map points in $\delta$ to points in $\delta$.
\item The \textit{pointwise stabiliser} $G_{(\delta)}$ of $\delta$ is the subgroup of $G$ of elements which fix all points in $\delta$, $$\{g\in{}G\mid{} \forall\alpha\in\delta : \alpha{}g=\alpha\} := \bigcap_{\alpha\in\delta}G_\alpha.$$
\item A \textit{base} of $G$ is a subset $B$ of $\Omega$ for which $G_{(B)}=1$.
\item The \textit{base size} $b(G)$ of $G$ is the smallest size of a base for $G$.
\item The \textit{fixity} fix$(g)$ of $g$ is the number of points in $\Omega$ that are fixed by $g$, $$\text{fix}(g) :={ } \mid\{\omega\in\Omega\mid{}\omega{}g=\omega\}\mid.$$
\item The \textit{degree} $m(g)$ of $g$ is the number of points moved by $g$, $$m(g):=n-\text{fix}(g).$$
\item The \textit{fixed point ratio} fpr$(G)$ of $G$ is the maximum proportion of points in $\Omega$ that are fixed by a nontrivial element of $G$, $$\text{fpr}(G):=\max_{g\in{}G\backslash{1}}{}\frac{\text{fix}(g)}{n}.$$
\item The \textit{minimal degree} $m(G)$ of $G$ is the smallest number of points that are moved by a nontrivial element of $G$, $$m(G):= \min_{g\in{}G\backslash{1}}{ }m(g) = n(1-\text{fpr}(G)).$$
\end{enumerate}
\end{definition}

\subsection{Primitive, Quasiprimitive, and Innately Transitive Groups}

We give the definition of primitive, quasiprimitive, and innately transitive groups, followed by a theorem collecting many of the results on these structures from the literature. We will later show how these bounds can be used to establish similar results for semiprimitive groups.

\begin{definition}\label{primdef}
Let $G\leq{}S_\Omega$ be a finite transitive permutation group. 

\begin{enumerate}
\item $G$ is \textit{primitive} if there does not exist a nontrivial $G$-invariant partition of $\Omega$.
\item $G$ is \textit{quasiprimitive} if every nontrivial normal subgroup of $G$ is transitive on $\Omega$.
\item $G$ is \textit{innately transitive} if $G$ has a minimal normal subgroup $N$ that is transitive on $\Omega$.
\end{enumerate}
\end{definition}

Note that since a transitive permutation group acts transitively on the set of orbits of its normal subgroups, every normal subgroup $N$ of a primitive group $G$ must be transitive. This is since otherwise $\Omega$ would have a nontrivial $G$-invariant partition, namely the set orbits of $N$. This shows that all primitive groups are quasiprimitive. Similarly, since every finite quasiprimitive group has some minimal normal subgroup, which by the definition of quasiprimitivity must be transitive, all quasiprimitive groups are innately transitive. In an innately transitive group $G$, every intransitive normal subgroup of $G$ centralises a transitive minimal normal subgroup of $G$. Centralisers of transitive groups are known to be semiregular \cite[Theorem 3.2]{PraegerSchneider}. In particular, every normal subgroup of $G$ must be either transitive or semiregular, and hence innately transitive groups are seen to be semiprimitive. We are now ready to present the collection of results which will be generalised in later sections.

\begin{theorem}\label{QPD}\label{BQG}\label{qprimmed}\label{PrimB}\label{gmf}
Let $G\leq{S_\Omega}$ be an innately transitive group of degree $n$. The following hold:
\begin{enumerate}
	\item Either $|G|<4^n$, or $G\geq{}A_\Omega$. 
	\item There is constant $n_0$ such that if $n\geq{}n_0$, then either $b(G)\leq{}4\sqrt{n}\log(n)$, or $G\geq{}A_\Omega$. 
    \item Either $m(G)\geq{}(\sqrt{n}-1)/2$, or $G\geq{}A_\Omega$. 
    \item If $G$ is primitive then either the socle of $G$ is a product of alternating groups, or $\text{fpr}(G)\leq{}4/7$. 
    \item If $G$ is primitive then $l(G)\leq{}2\log_2n$. 
\end{enumerate}
\end{theorem}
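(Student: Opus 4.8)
The plan is to establish each of the five parts by reducing to, or directly citing, the corresponding bound for primitive groups, so I would argue part by part. Parts (4) and (5) are stated only for primitive $G$ and are precisely the results already recorded in the literature: (4) is the known bound on the fixed point ratio of a primitive group whose socle is not a product of alternating groups, and (5) is the chief-length (equivalently composition-length) estimate for primitive groups underlying \cite{Me}. For these two I would simply invoke the cited theorems, with no further work.

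For the order bound (1), the base-size bound (2), and the minimal-degree bound (3), the primitive case is classical and CFSG-free: the $4^n$ order bound is Praeger--Saxl, while the $O(\sqrt{n}\log n)$ base size and the $(\sqrt{n}-1)/2$ minimal degree are due to Babai for uniprimitive groups, the remaining $2$-transitive primitive groups being treated separately. The task is therefore to pass from an innately transitive group to its primitive constituents. First I would dispose of the abelian-plinth case: a transitive abelian minimal normal subgroup $N$ is regular, and since $N$ is minimal normal it is an irreducible module for a point stabiliser, so $G$ is already primitive of affine type and nothing new is required. Hence I may assume the plinth is non-abelian, $N\cong T^d$.

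In the non-abelian case I would reduce along a maximal block system $\Sigma$ of size $m$, on which $G^\Sigma$ acts primitively, and combine the primitive estimate for $G^\Sigma$ with control of the kernel $K=G_{(\Sigma)}$. For the order this is multiplicative, $|G|=|G^\Sigma|\cdot|K|$, and one bounds $|K|$ via the product structure of the plinth across the $m$ blocks so that $|G^\Sigma|\cdot|K|<4^n$. For the minimal degree, an element moving a block displaces all $n/m$ points of that block, so a nontrivial $g$ with $g^\Sigma\neq 1$ already moves at least $((\sqrt{m}-1)/2)\cdot(n/m)$ points, which for every $m\le n$ exceeds $(\sqrt{n}-1)/2$; the base-size estimate is handled analogously by pulling a base of $G^\Sigma$ back through $\Sigma$ and appending a base for the action of $K$ within the blocks.

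The main obstacle is the complementary within-blocks case: a nontrivial $g\in K$ acting trivially on $\Sigma$, supported in the worst case on a single block, could a priori have tiny degree and so break both the minimal-degree and the base-size bounds; likewise the block reductions above require $G^\Sigma\not\geq A_\Sigma$, and near-symmetric quotients are exactly where the ``$G\geq A_\Omega$'' escape clause must absorb the exceptions. Resolving this is where innate transitivity is indispensable, since the transitive minimal normal subgroup $N\cong T^d$ forces the relevant normal subgroups to act as subdirect products across the blocks, preventing an element from being supported on too few blocks. Quantifying this is precisely the delicate point; for the present (preliminary) theorem, however, each part is obtained by citing the corresponding primitive or quasiprimitive result together with the standard block reductions in the literature, and the genuinely new analysis is deferred to the later sections that prove Theorems \ref{classcor} and \ref{bounds}.
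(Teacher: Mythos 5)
Your proposal ends in the same place as the paper: Theorem \ref{QPD} is a survey statement whose ``proof'' in the paper is simply a list of citations (Praeger--Saxl \cite{PrimO} and Praeger--Shalev \cite{BQG} for (1), Babai \cite{PrimB} and \cite{BQG} for (2)--(3), Guralnick--Magaard \cite{gmf} for (4), Menezes \cite{NM} for (5)), with Bamberg's thesis \cite{JB} supplying the innately transitive forms of (1)--(3) directly, so no reduction from innately transitive to primitive is actually carried out there. Your sketched block reduction --- including the genuine difficulty you flag about kernel elements supported on few blocks, and the slightly loose claim that $((\sqrt{m}-1)/2)\cdot(n/m)$ always exceeds $(\sqrt{n}-1)/2$ --- is therefore not load-bearing, and since you explicitly fall back on citing the literature, your approach is essentially the paper's.
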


\begin{proof}

We draw these results from the sources cited below:

\begin{enumerate}
\item See \cite{PrimO}*{Theorem 1} for $G$ primitive, \cite{BQG}*{Theorem $4.2$} for $G$ quasiprimitive, and \cite{JB}*{Theorem $6.1.4$} for $G$ innately transitive.
\item See \cite{PrimB}*{Theorem 0.2} for $G$ primitive, \cite{BQG}*{Theorem $5.2$} for $G$ quasiprimitive, and \cite{JB}*{Theorem $6.1.6$} for $G$ innately transitive.
\item See \cite{Wie}*{Theorem 15.1} and \cite{PrimB}*{Theorem $0.3$} for $G$ primitive, \cite{BQG}*{Theorem $7.2$} for $G$ quasiprimitive, and \cite{JB}*{Theorem $6.1.7$} for $G$ innately transitive.
\item See \cite{gmf}.
\item See \cite{NM}*{Theorem 10.0.0.6}.
\end{enumerate}
\end{proof}

\subsection{Structure Theory}

We require some results about the structure of certain group actions and group extensions.

\begin{lemma}\label{subcartesian}
Suppose $M\times{}H$ is a group which acts transitively on a set $\Omega$. Let $\omega\in\Omega$, and define $\delta:=\omega^M$, and $\sigma:=\omega^H$. The following hold:
\begin{enumerate}
    \item $(MH)_{\delta\cap{}\sigma}=M_\sigma{}H_\delta$.
    \item $M_\sigma$ and $H_\delta$  each act transitively on $\delta\cap{}\sigma$.
    \item $M_\sigma\cong{}(MH)_\omega/H_\omega$, and $H_\delta\cong{}(MH)_\omega/M_\omega$.
\end{enumerate}
\end{lemma}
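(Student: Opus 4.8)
The plan is to exploit the two features built into the hypothesis: that $M$ and $H$ commute, and that both are normal in $G:=MH=M\times H$. First I would record a structural observation and a reformulation. Since $M\trianglelefteq G$, the set of $M$-orbits is a $G$-invariant partition, so each $M$-orbit is a block for $G$; in particular $\delta=\omega^M$ is a block, and symmetrically $\sigma=\omega^H$ is a block. For the reformulation, for $m\in M$ commutativity gives
\[
\sigma m=(\omega^H)m=\{\,\omega h m: h\in H\,\}=\{\,\omega m h: h\in H\,\}=(\omega m)^H,
\]
so $\sigma m=\sigma$ if and only if $\omega m\in\sigma$. Thus $M_\sigma=\{m\in M:\omega m\in\sigma\}$, and symmetrically $H_\delta=\{h\in H:\omega h\in\delta\}$. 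This identity is the engine for all three parts.

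With it, part $(2)$ is nearly immediate. As $\delta$ is $M$-invariant and every $m\in M_\sigma$ preserves $\sigma$, the group $M_\sigma$ acts on $\delta\cap\sigma$. I would then prove $\delta\cap\sigma=\omega^{M_\sigma}$: the inclusion $\omega^{M_\sigma}\subseteq\delta\cap\sigma$ is clear from the reformulation, and conversely any $\gamma\in\delta\cap\sigma$ has the form $\gamma=\omega m$ with $m\in M$ (since $\gamma\in\delta=\omega^M$), and $\gamma\in\sigma$ forces $\omega m\in\sigma$, i.e.\ $m\in M_\sigma$. Hence $\delta\cap\sigma$ is a single $M_\sigma$-orbit, so $M_\sigma$ is transitive on it; the symmetric argument gives transitivity of $H_\delta$.

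For part $(1)$ I would first reduce to stabilisers of the blocks themselves by showing $(MH)_{\delta\cap\sigma}=G_\delta\cap G_\sigma$, where $G_\delta,G_\sigma$ denote setwise stabilisers in $G$. The inclusion $\supseteq$ is trivial. For $\subseteq$, take $g\in(MH)_{\delta\cap\sigma}$; then $\omega g\in\delta\cap\sigma\subseteq\delta$, and because $\delta$ is a block with $\delta g=(\omega g)^M$, the membership $\omega g\in\delta$ forces $\delta g=\delta$, so $g\in G_\delta$, and symmetrically $g\in G_\sigma$. Finally I would split an arbitrary $g=mh\in G_\delta\cap G_\sigma$: from $\delta g=\delta$ and $\delta m=\delta$ we get $\delta h=\delta$, so $h\in H_\delta$; from $\sigma g=\sigma$ and $\sigma h=\sigma$ we get $\sigma m=\sigma$, so $m\in M_\sigma$. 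The reverse containment $M_\sigma H_\delta\subseteq G_\delta\cap G_\sigma$ follows straight from the definitions, yielding $(MH)_{\delta\cap\sigma}=M_\sigma H_\delta$.

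Part $(3)$ I would obtain from the first isomorphism theorem applied to the projection $\pi_M\colon M\times H\to M$ restricted to $(MH)_\omega$: its kernel is $H\cap(MH)_\omega=H_\omega$, and its image is $\{m\in M:\exists\,h\in H,\ \omega mh=\omega\}=\{m:\omega m\in\sigma\}=M_\sigma$ by the same commuting computation, so $M_\sigma\cong(MH)_\omega/H_\omega$, and symmetrically $H_\delta\cong(MH)_\omega/M_\omega$. I expect the genuine content — and the main obstacle — to be the reverse inclusion in part $(1)$, precisely the step that an element merely preserving the intersection $\delta\cap\sigma$ must in fact preserve $\delta$ and $\sigma$ individually. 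This is where the block structure coming from normality of $M$ and $H$ does the real work; without it the decomposition $g=mh$ would not split cleanly into $M_\sigma$ and $H_\delta$ components.
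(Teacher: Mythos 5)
Your proof is correct and follows essentially the same route as the paper's: both rest on the observation that $\delta$, $\sigma$, and hence $\delta\cap\sigma$ are blocks for $M\times H$, on the commuting relation $\sigma m=(\omega m)^H$ (equivalently $M_\sigma=\{m:\omega m\in\sigma\}$), and on a standard isomorphism-theorem computation for part $(3)$ (the paper factors through $(MH)_\sigma=H(MH)_\omega$ where you restrict the projection $M\times H\to M$ to $(MH)_\omega$, but these are the same calculation). The minor reorganisations — passing through $G_\delta\cap G_\sigma$ in part $(1)$ and proving $\delta\cap\sigma=\omega^{M_\sigma}$ directly in part $(2)$ — are sound and do not change the substance.
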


\begin{proof}Note that since both $\delta$ and $\sigma$ are blocks for $MH$, so is $\delta\cap\sigma$. This implies that $\delta\cap{}\sigma$ is a block for each of $M$ and $H$ too. If $mh\in{}(MH)_{\delta\cap{}\sigma}$, then $\omega^{mh}\in{}(\delta\cap{}\sigma)^{mh}=\delta\cap\sigma$. Now, $h$ maps $\omega^m\in{}\sigma$ to $\omega^{mh}\in\delta\cap{}\sigma$, and in particular $\omega^m$ and $\omega^{mh}$ are in the same $H$-orbit, namely $\sigma$. Hence $m\in{}M_\sigma$. A similar argument shows that $h\in{}H_\delta$, and hence that $mh\in{}M_\sigma{}H_\delta$. So we have $(MH)_{\delta\cap{}\sigma}\leq{}M_\sigma{}H_\delta$. Conversely, since $M=M_\delta$ we have $M_{\delta\cap{}\sigma}=M_\delta\cap{}M_\sigma=M\cap{}M_\sigma=M_\sigma$, and similarly $H_\delta=H_{\delta\cap{}\sigma}$. Hence $M_\sigma{}H_\delta\leq{}(MH)_{\delta\cap\sigma}$. This shows that $M_\sigma{}H_\delta=(MH)_{\delta\cap{}\sigma}$, establishing part $(1)$ of the Lemma.

Let $\alpha$ and $\beta$ be elements of $\delta\cap{}\sigma$. Since $M$ is transitive on $\delta$, there exists some permutation $m\in{}M$ such that $\alpha^m=\beta$. As $\delta\cap{}\sigma$ is a block of imprimitivity for $G$, and $m$ maps at least one element of $\delta\cap{}\sigma$ back into $\delta\cap{}\sigma$, we conclude that $m$ fixes $\delta\cap{}\sigma$ setwise, and find that $m\in{}M_{\delta\cap{}\sigma}$. Hence $M_{\delta\cap{}\sigma}$ acts transitively on $\delta\cap{}\sigma$. As $M_{\delta\cap{}\sigma}=M_\delta\cap{}M_\sigma=M\cap{}M_\sigma=M_\sigma$, we conclude that $M_\sigma$ acts transitively on $\delta\cap{}\sigma$. An identical argument shows that $H_\delta$ also acts transitively on $\delta\cap\sigma$. This establishes part $(2)$ of the lemma.

Since $H\cap{}M=1$, we have $M_\sigma\cong{}HM_\sigma/H=(MH)_\sigma/H$. As $H$ is transitive on $\sigma$ and $\omega\in\sigma$, we have that $(MH)_\sigma=H(MH)_\omega$. Hence by the third isomorphism theorem, we have $$\frac{(MH)_\sigma}{H}=\frac{H(MH)_\omega}{H}\cong{}\frac{(MH)_\omega}{H\cap{}(MH)_\omega}=\frac{(MH)_\omega}{H_\omega}.$$ Hence $M_\sigma\cong{}(MH)_\omega/H_\omega$, as required. By symmetry, $H_\delta\cong(MH)_\omega/M_\omega$. This establishes part $(3)$ of the lemma.
\end{proof}

We will need to address the central extensions of the alternating and symmetric groups in some detail, and so we present the relevant information here.

\begin{definition}\label{schur}
If $G$ is a group, we let $\mathbb{M}(G)$ denote the \textit{Schur multiplier} $H^2(G,\mathbb{Z})$ of $G$.
\end{definition}

\begin{lemma}[]
If $H$ and $G$ are groups, and $\pi:H\to{}G$ is a surjection with the property that $\ker\pi\leq{}Z(H)\cap{}H'$, then $\ker\pi\lesssim\mathbb{M}(G)$. Such a group $H$ is called a $\textit{stem extension}$ of $G$.
\end{lemma}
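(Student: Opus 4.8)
The plan is to prove the result via Hopf's formula for the Schur multiplier, realising $\ker\pi$ as a quotient of $\mathbb{M}(G)$ and then upgrading this to an embedding using finiteness. Throughout I take $G$ to be finite, as elsewhere in the paper, so that $\mathbb{M}(G)$ is finite and the argument applies. Write $A:=\ker\pi$, and present $G$ as $F/R$ with $F$ free of finite rank. Recall that Hopf's formula identifies
\[
\mathbb{M}(G)\;\cong\;\frac{R\cap[F,F]}{[F,R]}.
\]
Since $F$ is free, I can lift each free generator along the surjection $\pi$, producing a homomorphism $\phi\colon F\to H$ with $\pi\circ\phi$ equal to the canonical projection $F\to F/R=G$. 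In particular $\ker(\pi\circ\phi)=R$, so $\phi(R)\subseteq\ker\pi=A$, and conversely any $f\in F$ with $\phi(f)\in A$ lies in $R$; hence $\phi(R)=A$.

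\textbf{The commutator bookkeeping.} First I would establish that $\phi$ is surjective. Because $\pi\circ\phi$ is onto, $H=\phi(F)\,A$; since $A\leq{}Z(H)$ is central, every commutator involving a factor from $A$ is trivial, so $[H,H]=[\phi(F),\phi(F)]=\phi([F,F])\subseteq\phi(F)$. As $A\leq H'$ by hypothesis, this gives $A\subseteq\phi(F)$, whence $H=\phi(F)A=\phi(F)$. Using surjectivity, $H'=\phi([F,F])$. Next, for $a\in A\leq H'=\phi([F,F])$ write $a=\phi(c)$ with $c\in[F,F]$; since $\pi(a)=1$ forces $c\in R$, I obtain $c\in R\cap[F,F]$, and therefore $A=\phi\bigl(R\cap[F,F]\bigr)$. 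Finally, because $A=\phi(R)$ is central in $H=\phi(F)$, I have $\phi([F,R])=[\phi(F),\phi(R)]=[H,A]=1$, so $[F,R]\subseteq\ker\phi$. Combining these, $\phi$ induces a well-defined surjection
\[
\frac{R\cap[F,F]}{[F,R]}\;\twoheadrightarrow\;A,
\]
which exhibits $A=\ker\pi$ as a quotient of $\mathbb{M}(G)$.

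\textbf{From quotient to subgroup.} The main obstacle is the discrepancy between what the construction naturally yields, namely that $A$ is a \emph{quotient} of $\mathbb{M}(G)$, and what the statement asserts, namely that $A$ \emph{embeds} in $\mathbb{M}(G)$. To close this gap I would invoke finiteness and commutativity: $A\leq{}Z(H)$ is abelian, and being a quotient of the finite group $\mathbb{M}(G)$ it is finite. For a finite abelian group every quotient is isomorphic to a subgroup (by the structure theorem, or equivalently by self-duality under $\mathrm{Hom}(-,\mathbb{Q}/\mathbb{Z})$, under which quotients of $\mathbb{M}(G)$ correspond to subgroups of $\widehat{\mathbb{M}(G)}\cong\mathbb{M}(G)$). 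Applying this to the surjection above yields $\ker\pi\lesssim\mathbb{M}(G)$, as required. The remaining verifications are the routine commutator manipulations of the second paragraph; the only genuinely non-formal point is the final finite-abelian duality step, which should be stated explicitly rather than left implicit.
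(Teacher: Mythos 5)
Your proof is correct. The paper gives no argument of its own for this lemma --- it simply cites Rotman --- and your Hopf-formula argument is the standard one: lift a free presentation $F/R$ of $G$ along $\pi$, show the lift $\phi$ is surjective (centrality of $A=\ker\pi$ lets you compute $H'=\phi([F,F])$, and $A\leq H'$ then absorbs $A$ into $\phi(F)$), check that $\phi$ maps $R\cap[F,F]$ onto $A$ and annihilates $[F,R]$, and conclude that $A$ is a quotient of $(R\cap[F,F])/[F,R]\cong\mathbb{M}(G)$; finite-abelian duality then upgrades the quotient to an embedding. Two small points. First, the assertion $\phi(R)=A$ at the end of your first paragraph is premature: at that stage you have only shown $\phi(R)=A\cap\phi(F)$, and the full equality needs the surjectivity of $\phi$ that you establish in the following paragraph, so the two steps should be reordered (or the claim weakened to $\phi(R)\subseteq A$ until surjectivity is in hand). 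Second, you are right to make the finiteness hypothesis explicit: the quotient-to-subgroup step genuinely requires $\mathbb{M}(G)$ (equivalently $A$) to be finite, and without it the argument only shows that $\ker\pi$ is a homomorphic image of $\mathbb{M}(G)$. Since the paper works exclusively with finite groups, and only ever applies the lemma to bound $|\ker\pi|$ by $|\mathbb{M}(G)|$, this restriction is harmless, but it is worth recording as you do.
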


This definition and lemma come from \cite{Rotman}*{p.205} and \cite{Rotman}*{p.208}, respectively. Finally, we bring together some facts about the action of an abstract group on a minimal normal subgroup, and present a collection of results summarising the important features.

\begin{lemma}\label{issol}
Let $G$ be a group and $M\cong{}T^d$ be a nonabelian minimal normal subgroup of $G$, where $T$ is a (possibly abelian) simple group and $d$ is a positive integer. Let $\Gamma$ be the set of simple direct factors of $M$, $\phi:G\to{}S_\Gamma$ be the induced action of $G$ on $\Gamma$, and $N$ be the kernel of $\phi$. Then $N/MC_G(M)$ is soluble.
\end{lemma}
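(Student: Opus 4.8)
The plan is to realise the quotient $N/MC_G(M)$ as a subgroup of a direct power of $\Out(T)$ and then invoke the Schreier conjecture. Since $M$ is nonabelian, $T$ is a nonabelian simple group, so each simple direct factor $T_i\in\Gamma$ is centreless and $Z(M)=1$. The factors $T_1,\dots,T_d$ are permuted among themselves by conjugation in $G$, and by definition $N=\ker\phi$ is exactly the subgroup of $G$ stabilising every factor. In particular each $T_i$ is normal in $M$, so $M\leqslant N$, and $C_G(M)$ centralises (hence normalises) each $T_i$, so $C_G(M)\leqslant N$ as well; thus $MC_G(M)\leqslant N$ and the quotient makes sense. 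For $g\in N$, conjugation by $g$ restricts to an automorphism of each $T_i$, which I would package into a homomorphism $\theta\colon N\to\prod_{i=1}^d\Aut(T_i)\cong\Aut(T)^d$.

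The next step is to compute two kernels. First, $g\in\ker\theta$ means $g$ centralises every $T_i$, hence centralises $M=\prod_i T_i$, so $\ker\theta=C_N(M)=C_G(M)$ (using $C_G(M)\leqslant N$). This gives an embedding $N/C_G(M)\lesssim\Aut(T)^d$. Second, for $m=(t_1,\dots,t_d)\in M$ conjugation acts factorwise by $t_i$, so $\theta(M)=\prod_i\mathrm{Inn}(T_i)$, and because each $T_i$ is centreless this restriction is faithful on $M$. Composing $\theta$ with the natural projection $q\colon\prod_i\Aut(T_i)\to\prod_i\Out(T_i)=\Out(T)^d$, I claim the kernel of $q\theta$ is precisely $MC_G(M)$. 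The inclusion $MC_G(M)\leqslant\ker(q\theta)$ is immediate, since $\theta(M)=\prod_i\mathrm{Inn}(T_i)=\ker q$ and $\theta(C_G(M))=1$. For the reverse inclusion, if $g\in\ker(q\theta)$ then $\theta(g)\in\ker q=\prod_i\mathrm{Inn}(T_i)=\theta(M)$, so $\theta(g)=\theta(m)$ for some $m\in M$, whence $gm^{-1}\in\ker\theta=C_G(M)$ and $g\in MC_G(M)$. Therefore $N/MC_G(M)\cong\operatorname{im}(q\theta)\lesssim\Out(T)^d$.

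Finally, the Schreier conjecture (a consequence of the Classification of Finite Simple Groups) asserts that $\Out(T)$ is soluble for every finite simple group $T$, so $\Out(T)^d$ is soluble, and hence so is its subgroup $N/MC_G(M)$. I expect the only genuinely non-elementary ingredient to be this appeal to Schreier's conjecture; everything else is routine bookkeeping with the conjugation homomorphism $\theta$. The one place warranting a careful write-up is the identification $\ker(q\theta)=MC_G(M)$, where the two-way containment must be argued cleanly using the faithfulness of $\theta$ on $M$ and the equality $\theta(M)=\ker q$.
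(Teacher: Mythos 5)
Your argument is correct and is essentially the paper's own proof: both realise $N/MC_G(M)$ inside $\Out(T)^d$ via the conjugation action on the simple direct factors, identify $C_G(M)$ and $MC_G(M)$ as the relevant kernels (the paper phrases this through $\psi\colon G\to\Aut(M)\cong\Aut(T)^d\rtimes S_\Gamma$ and the isomorphism theorems, you through the explicit composite $q\theta$), and conclude by the Schreier conjecture. No substantive difference.
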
 \begin{proof} Note that $d=|\Gamma|$. Consider the conjugation action $\psi:G\to{}\Aut(M)$ of $G$ on $M$. Now, $\Aut(M)\cong{}\Aut(T)^{d}\rtimes{S_\Gamma}$, and so we have the following inequalities:\begin{enumerate}
    \item{}$\ker\psi=C_G(M)<MC_G(M)\leq{}N$,
    \item{}$\psi(MC_G(M))=\psi(M)=\text{Inn}(T)^d$,
    \item{}$\psi(N)\leq{}\text{Aut}(T)^d.$ 
\end{enumerate} 
Now, as $C_G(M)\leq{}MC_G(M)\leq{}N$, we have by the third and first isomorphism theorems that $$\frac{N}{MC_G(M)}\cong{}\frac{N/C_G(M)}{MC_G(M)/C_G(M)}=\frac{\psi(N)}{\psi(MC_G(M))}\leq{}\frac{\Aut(T)^d}{\text{Inn}(T)^d}\cong{}\text{Out}(T)^d.$$ By the Schreier Conjecture, $\Out(T)$ is soluble, and hence so is $\Out(T)^d$. Subgroups of soluble groups are soluble, and so we conclude that $N/MC_G(M)$ is soluble, as required. \end{proof}

\begin{corollary}\label{mnsaction}
Let $M\trianglelefteq{G}$ be a minimal normal subgroup of $G$, and $H:=C_G(M)$. Then one of the following holds:
\begin{enumerate}
    \item $M$ is abelian, $M\leq{}H$, there exists a prime $p$ and positive integer $d$ such that $M\cong{}C_p^d$, and $G/H$ acts on $M$ irreducibly and faithfully,
    \item $M$ is nonabelian, there exists a nonabelian simple group $T$ and positive integer $d$ such that $M\cong{}T^d$, and $G/MH$ acts transitively on the $d$ simple direct factors of $M$ with a soluble kernel.
\end{enumerate}
\end{corollary}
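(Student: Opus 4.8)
The plan is to reduce everything to the standard structure theory of minimal normal subgroups together with Lemma~\ref{issol}. First I would recall that any minimal normal subgroup $M$ of a finite group is characteristically simple: a characteristic subgroup of $M$ is normal in $G$, so minimality of $M$ forces every characteristic subgroup of $M$ to be trivial or all of $M$. By the classification of finite characteristically simple groups, $M$ is a direct product of pairwise isomorphic simple groups, that is, $M\cong T^d$ for some simple group $T$ and some $d\ge 1$. The proof then splits according to whether $T$ is abelian.

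\emph{Abelian case.} If $T$ is abelian then $T\cong C_p$ for a prime $p$ and $M\cong C_p^d$ is elementary abelian; since $M$ is abelian it is contained in its own centraliser, so $M\le C_G(M)=H$. The conjugation action furnishes a homomorphism $G\to\Aut(M)\cong\GL_d(p)$ whose kernel is exactly $C_G(M)=H$, so $G/H$ embeds in $\GL_d(p)$ and acts faithfully on $M\cong\F_p^d$. For irreducibility I would note that any $G/H$-submodule $W$ of $M$ is a $G$-invariant subgroup of the abelian group $M$, hence normal in $G$; minimality of $M$ then gives $W=1$ or $W=M$, so the action is irreducible. This is precisely conclusion $(1)$.

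\emph{Nonabelian case.} If $T$ is nonabelian, then conclusion $(2)$ requires two things. For transitivity on the set $\Gamma$ of $d$ simple direct factors, I would take the product of the factors in a single $G$-orbit under the conjugation permutation action; this product is a normal subgroup of $M$ that is invariant under $G$, hence normal in $G$, and it is nontrivial, so minimality of $M$ forces it to equal $M$ and the orbit to be all of $\Gamma$. For the kernel, I would observe that both $M$ and $H=C_G(M)$ act trivially on $\Gamma$ (conjugation by $M$ fixes each factor setwise, and $H$ centralises $M$), so that $MH\le N$ where $N=\ker(G\to S_\Gamma)$; thus $G/MH$ acts on $\Gamma$ with kernel $N/MH=N/MC_G(M)$, which is soluble by Lemma~\ref{issol}. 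This yields conclusion $(2)$.

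I expect no serious obstacle here, since the solubility of the kernel---the one genuinely nonelementary ingredient, resting on the Schreier Conjecture---has already been isolated in Lemma~\ref{issol}, and the remaining work is bookkeeping. The point demanding the most care is the consistent identification of the relevant kernel: verifying that $MH$ lies in the kernel $N$ of the factor-permutation action so that the quotient $N/MH$ coincides with the group $N/MC_G(M)$ to which Lemma~\ref{issol} applies, and, in the abelian case, checking that $G$-invariant subgroups of $M$ really are normal in $G$ so that minimality delivers irreducibility.
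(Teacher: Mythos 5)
Your proposal is correct and follows essentially the same route as the paper's proof: split on whether $M$ is abelian, use the conjugation action on $M$ (respectively on its simple direct factors) with kernel $H$ (respectively $N\geq MH$), and invoke Lemma~\ref{issol} for solubility of $N/MC_G(M)$. The only difference is that you spell out a few standard facts (transitivity on the factors, irreducibility from minimality) that the paper states without proof.
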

\begin{proof}
If $M$ is abelian, then since $M$ is minimal normal in $G$, we have $M\cong{}C_p^d$ for some prime $p$ and positive integer $d$. The condition that $M$ is minimal normal in $G$ is equivalent to the requirement that $G$ acts irreducibly on $M$ by conjugation. Let $\rho:G\to{}\text{Aut}(M)$ be the conjugation action of $G$ on $M$. The kernel of $\rho$ is $C_G(M)=H$, and so by the first isomorphism theorem, $\rho(G)\cong{}G/H$.

If $M$ is nonabelian, then $M\cap{}H=1$, and so $MH\cong{}M\times{}H$. Since $M$ is minimal normal in $G$, $M\cong{}T^d$ for some nonabelian simple group $T$ and positive integer $d$, and $G$ acts transitively by conjugation on the set $\Gamma:=\{T_1, ..., T_d\}$ of simple direct factors of $M$. Let $\phi:G\to{}S_\Gamma$ be this action. Let $N$ be the kernel of $\phi$. By Lemma \ref{issol}, $MH\trianglelefteq{N}$, and $N/MH$ is soluble. As $MH\trianglelefteq{N}$, $\phi$ induces an action $\overline{\phi}$ of $G/MH$ on $\Gamma$, given by $\overline{\phi}(MHg):=\phi(g)\in{}S_\Gamma$. Note that this map is well defined since $MH\leq{}N$. 

Now, $MHg\in{}\ker\overline\phi$ if and only if $1=\overline\phi(MHg)=\phi(g)$, or equivalently if $g\in{}\ker\phi=N$. Hence $\ker\overline\phi=N/MH$. As $N/MH\cong{}\overline\phi(N/MH)$ is soluble, this establishes part $(2.)$ of the theorem.
\end{proof}

\subsection{Properties of $A_\Omega$ and $S_\Omega$}

We present a collection of results on the actions and extensions of the alternating and symmetric groups that will be used in the classification of their semiprimitive covers.

\begin{lemma}\label{ansnrep}
Let $G=A_n$ or $S_n$. If $G$ acts transitively and faithfully on $d$ points, then $d\geq{}n$. If $G$ acts irreducibly and faithfully on $C_p^d$, with $p$ a prime, then either $(C_p^d, G)=(C_2^4,A_7)$ or $(C_2^4, A_8)$, or $d\geq{}n-2$.
\end{lemma}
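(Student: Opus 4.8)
The first claim is a minimal-degree estimate, for which the slickest argument uses only group orders and does not even need transitivity. A faithful action on $d$ points is an embedding $G\hookrightarrow S_d$, so $|G|\leq d!$. With $G=S_n$ this forces $n!\leq d!$, hence $d\geq n$; with $G=A_n$ it forces $n!/2\leq d!$, and since $(n-1)!=n!/n<n!/2$ for $n\geq3$ we again get $d\geq n$ (the cases $n\leq2$ being trivial). Alternatively one may simply quote the classical fact that the smallest index of a proper subgroup of $A_n$ or $S_n$ equals $n$.

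For the second claim I seek a lower bound on $d=\dim_{\F_p}V$ for a faithful irreducible $\F_p G$-module $V=C_p^d$, and I may assume $n\geq5$ (for $n\leq4$ we have $n-2\leq2$, which is immediate since $G$ is nonabelian). First I reduce $S_n$ to $A_n$: the only normal subgroups of $S_n$ are $1,A_n,S_n$, so $V|_{A_n}$ is faithful, and since $A_n$ is simple every nontrivial composition factor of $V|_{A_n}$ is faithful; thus $d$ is at least the dimension of any such factor. Extending scalars, I write $V\otimes_{\F_p}\overline{\F}_p$ as a direct sum of the Galois conjugates of an absolutely irreducible faithful $\overline{\F}_p A_n$-module $W$, so that $d=e\cdot\dim_{\overline{\F}_p}W$, where $e$ is the number of conjugates (the degree of the field of definition of $W$). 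In particular $d\geq R_p(A_n)$, the minimal dimension of a faithful absolutely irreducible $\overline{\F}_p A_n$-module, and moreover $d$ is a multiple of $\dim_{\overline{\F}_p}W$.

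The main input is then the known bound $R_p(A_n)\geq n-2$, attained by the fully deleted permutation module (of dimension $n-2$ when $p\mid n$, and $n-1$ otherwise). For $n\geq9$ this module is the smallest faithful irreducible in every characteristic, by the modular representation theory of symmetric groups (James), so $d\geq R_p(A_n)\geq n-2$ at once, for both $A_n$ and $S_n$. This leaves the finite range $5\leq n\leq8$, which I would dispatch using the known Brauer character tables together with the order bound $|G|\leq|\GL_d(p)|$. The factor $e$ is decisive here: the anomalously small absolutely irreducible modules of $A_5$ and $A_6$ in characteristics $2$ and $3$ are defined only over $\F_4$ or $\F_9$, so $e\geq2$ and the resulting $\F_p$-dimension is still at least $n-2$. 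What remains are precisely $(C_2^4,A_7)$ and $(C_2^4,A_8)$, coming from the embeddings $A_7,A_8\leq\GL_4(2)=A_8$ with $W$ defined over $\F_2$, so $e=1$ and $d=4<n-2$.

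The crux, and the only genuine obstacle, is this finite check --- in particular verifying that the two $A_n$-exceptions do not propagate to $S_n$. For $G=S_7$ or $S_8$ in characteristic $2$ the reduction yields only $d\geq R_2(A_n)=4$, so the small dimensions must be excluded by hand: a faithful $4$-dimensional $\F_2$-module would embed $S_7$ or $S_8$ into $\GL_4(2)=A_8$, which is impossible since $A_8$ has no subgroup of index $4$ and $|S_8|>|A_8|$; the remaining value $d=5$ for $S_8$ is ruled out because its smallest faithful irreducible $\F_2$-module has dimension $6$. Carrying the field-of-definition bookkeeping correctly through these small cases, rather than any single difficult step, is where the attention is required.
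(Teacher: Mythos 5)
Your proof of the first assertion is the one the paper gives: faithfulness yields an embedding $G\hookrightarrow S_d$, so $|G|\leq d!$, and $n!\leq 2\,d!$ forces $d\geq n$ for $n\geq 3$. For the second assertion the routes diverge. The paper disposes of it in one line by citing Wagner's determination of the minimal degrees of faithful linear representations of $A_n$ and $S_n$ over fields of each characteristic; you instead sketch a proof of that result --- restrict to $A_n$, pass to a faithful composition factor, extend scalars and track the field of definition, invoke the lower bound $n-2$ coming from the (fully deleted) permutation module and James's work for $n\geq 9$, and settle $5\leq n\leq 8$ from the Brauer character tables. Your outline is sound, and the bookkeeping you emphasise is exactly where the content lies: the sub-$(n-2)$-dimensional absolutely irreducible modules for $n\leq 8$ are the $2$-dimensional $\F_4$-modules of $A_5$ and the $3$-dimensional $\F_9$-modules of $A_6$, which Galois-conjugate doubling pushes back up to $\F_p$-dimension at least $n-2$, together with the $4$-dimensional $\F_2$-modules of $A_7$ and $A_8=\GL_4(2)$, which survive as the two listed exceptions; your exclusion of $S_7,S_8\leq\GL_4(2)$ by the index-four and order arguments is also correct, as is the degree-$6$ minimum for faithful irreducible $\F_2 S_8$-modules. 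What the paper's route buys is brevity and a clean attribution; what yours buys is a transparent explanation of where the exceptions come from. But be aware that your argument quietly outsources its hardest ingredients (the $n\geq 9$ bound and the completeness of the small-case tables) to the same literature, so in substance it is a re-derivation of the theorem of Wagner that the paper cites rather than an independent shortcut, and a referee would expect the finite check for $5\leq n\leq 8$ to be carried out explicitly rather than asserted.
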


\begin{proof}
Note that if $\rho:A_n\to{}S_d$ is a faithful action of $A_n$ on $d$ points, then $|A_n|\leq|S_d|$, which implies that $n!\leq{}2d!$. This implies that $n\leq{}d$, as required. A similar argument shows that the same bound holds for faithful actions of $S_n$.
If $G$ acts faithfully and irreducibly on $C_p^d$, then by the results of \cites{mrd2, mrd1}, either $(C_p^d, G)=(C_2^4,A_7)$, $(C_2^4, A_8)$, or $d\geq{}n-2$.
\end{proof}

We now wish to establish which central covers of $A_n$ or $S_n$ have faithful actions that induce $A_n$ or $S_n$ on the orbits of a normal subgroup. Recall from Definition \ref{schur} that $\mathbb{M}(G)$ is the Schur multiplier of $G$, and classifies the isomorphism types of stem extensions of $G$. See \cite{TFSG}*{Chapter 2.7.2} for a proof of the following two lemmas.

\begin{lemma}\label{ansnmult}
Let $G=A_n$ or $S_n$. Then $\mathbb{M}(G)=C_2$, except if $n\leq3$, in which case $\mathbb{M}(G)=1$, or if $G=A_6$ or $A_7$, in which case $\mathbb{M}(G)=C_6$.\end{lemma}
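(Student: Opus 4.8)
The plan is to determine $\mathbb{M}(G)$ through the characterisation of stem extensions already recorded above: any stem extension $\pi\colon H\to G$ has $\ker\pi\lesssim\mathbb{M}(G)$, and a covering group realises equality, so it suffices to find the largest kernel occurring among stem extensions of $G$ (equivalently, to compute $H_2(G,\mathbb{Z})$). I would first dispatch the small cases by direct inspection: for $n\leq 3$ the groups $A_n$ and $S_n$ are trivial, cyclic, or isomorphic to $S_3$, and each has trivial multiplier by direct computation, while the base cases $n=4,5$ can be settled from explicit presentations or from their known covering groups. The general claim then splits into establishing matching lower and upper bounds on $\mathbb{M}(G)$ for $n\geq 4$.

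For the lower bound I would exhibit explicit central extensions. For $S_n$ with $n\geq 4$, Schur's spin double covers $\tilde S_n^\pm$ — presented on lifts $t_i$ of the Coxeter generators with $z$ central of order $2$, $t_i^2=z$ (or $1$), $(t_it_{i+1})^3=z$, and $(t_it_j)^2=z$ for $|i-j|\geq 2$ — are stem extensions with kernel $\langle z\rangle\cong C_2$, giving $C_2\lesssim\mathbb{M}(S_n)$; restricting to the even part yields $2.A_n$ and hence $C_2\lesssim\mathbb{M}(A_n)$. For the two exceptional cases I would additionally produce the triple covers $3.A_6$ and $3.A_7$ (for example via the faithful $3$-dimensional representation of $3.A_6$ coming from $A_6\cong\PSL_2(9)$, and the corresponding exceptional cover of $A_7$), giving the extra $C_3$ and so $C_6\lesssim\mathbb{M}(A_6),\mathbb{M}(A_7)$.

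The upper bound is the crux, and I would argue one prime at a time using the detection of the multiplier on a Sylow $p$-subgroup $P$: since $\mathrm{cor}\circ\mathrm{res}$ is multiplication by $[G:P]$, the $p$-part $\mathbb{M}(G)_{(p)}$ injects into $\mathbb{M}(P)$ as the subgroup of stable elements. The Sylow subgroups of $A_n$ and $S_n$ are direct products of iterated wreath products of cyclic groups of order $p$, whose multipliers are known by standard wreath-product formulas, and one checks that the stable part is $C_2$ at $p=2$ for $n\geq 4$ and trivial at every odd prime, with the sole exceptions of $p=3$ and $n\in\{6,7\}$, where it is $C_3$. For $S_n$ alone one could instead feed the Coxeter presentation into the Hopf formula $\mathbb{M}(G)\cong(R\cap[F,F])/[F,R]$ to bound $|\mathbb{M}(S_n)|\leq 2$, and then recover the $2$-part of $\mathbb{M}(A_n)$ from the index-$2$ inclusion, treating the odd part of $\mathbb{M}(A_n)$ by the Sylow analysis above.

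I expect the upper bound to be the main obstacle: writing down covers only shows $\mathbb{M}(G)$ is at least as large as claimed, whereas ruling out anything bigger — and in particular showing the $3$-part of $\mathbb{M}(A_n)$ is trivial for $n\neq 6,7$ yet exactly $C_3$ (not $C_9$ or larger) for $n=6,7$, despite $A_8$ having the same Sylow $3$-subgroup as $A_7$ — hinges on the delicate fusion/stability conditions and is where the real content lies. This is of course Schur's classical computation, which the paper simply quotes from \cite{TFSG}.
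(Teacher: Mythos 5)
The paper does not prove this lemma at all: it is stated as a quotation of Schur's classical computation, with a pointer to \cite{TFSG}*{Chapter 2.7.2}. Your outline, by contrast, reconstructs the standard proof, and the architecture is the right one: explicit stem extensions (the spin double covers $\tilde S_n^{\pm}$ and their even parts, plus the exceptional triple covers of $A_6$ and $A_7$) for the lower bound, and either the stable-element/Sylow detection argument or Schur's Hopf-formula count from the Coxeter presentation for the upper bound. You also correctly isolate where the content lies -- the upper bound, and in particular the fusion computation at $p=3$ that separates $A_7$ from $A_8$ even though they share a Sylow $3$-subgroup $C_3\times C_3$ with multiplier $C_3$. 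What the paper's citation buys is brevity and reliability for a deep classical fact that is peripheral to its main results; what your route would buy, if executed, is self-containment, at the cost of several pages of genuinely hard computation.

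As written, though, your proposal is a roadmap rather than a proof, and the two hardest steps are only named. First, on the lower bound: writing down the presentation of $\tilde S_n^{\pm}$ with $z$ central of order $2$ does not by itself show $z\neq 1$ in the presented group; one must exhibit a faithful representation (the spin representation inside the Clifford algebra, or an explicit projective representation of $S_n$ with nontrivial cocycle) to rule out collapse, and likewise for $3.A_6$ and $3.A_7$. (Your parenthetical deriving the $3$-dimensional representation of $3.A_6$ ``from $A_6\cong\PSL_2(9)$'' is not quite right -- the Valentiner group $3.A_6\leq\SL_3(\mathbb{C})$ is not obtained from the $\PSL_2(9)$ isomorphism -- though a correct construction exists.) Second, the upper bound via wreath-product multipliers and stability conditions is the entire theorem; asserting that ``one checks'' the stable part is $C_2$ at $p=2$ and trivial at odd primes outside the exceptions is precisely the claim to be proved. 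One further small imprecision: the exceptional $C_3$ occurs only for $A_6$ and $A_7$, not for $S_6$ or $S_7$ (whose multipliers are still $C_2$), so the sentence lumping $A_n$ and $S_n$ together in the Sylow analysis should be split. None of this makes the approach wrong -- it is Schur's proof -- but to stand as a proof it would need these steps carried out, which is exactly why the paper delegates the whole lemma to the literature.
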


Since we are only interested in minimal extensions, this lemma implies we need only consider covers with a kernel of size $2$ or $3$. In order to further examine the double-covers, we use the following lemma.

\begin{lemma}[]\label{2ansn} Let $G=A_n$ or $S_n$ and $n\geq{}5$. There exists a central extension $\nonsplit{C_2}{G}$ that may be thought of as a set of $\pm$-signed permutations. This group is uniquely determined for $A_n$, but there exist two non-isomorphic such groups for $S_n$. We denote these groups by $\nonsplit{C_2}{S_n}^+$ and $\nonsplit{C_2}{S_n}^-$. There exist natural projections $\pi:\nonsplit{C_2}{G}\to{}G$ with kernel $Z:=Z(\nonsplit{C_2}{G})$ that simply ``forget the sign" of a given element. For a given $n$, each of $\nonsplit{C_2}{A_n}$, $\nonsplit{C_2}{S_n^+}$, and $\nonsplit{C_2}{S_n^-}$ have isomorphic derived subgroups, namely $(\nonsplit{C_2}{G})'=\nonsplit{C_2}{A_n}$. Finally, any signed double transposition $z\in{}\nonsplit{C_2}{A_n}=(\nonsplit{C_2}{G)'}$ satisfies $z^2=-1\in{}Z$.\end{lemma}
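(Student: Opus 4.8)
The plan is to realise $\nonsplit{C_2}{G}$ explicitly as a central extension of $G$ by $Z=\langle-1\rangle\cong C_2$ and then read off each assertion from the construction. As a set I take $\{\pm1\}\times G$ with multiplication twisted by a $2$-cocycle $c\colon G\times G\to\{\pm1\}$, so that $\pi(\eps,g)=g$ is the sign-forgetting map; it is a surjection with kernel $\langle-1\rangle$, and since $Z(A_n)=Z(S_n)=1$ for $n\geq5$ any central element maps to $1$ under $\pi$ and hence lies in $\langle-1\rangle$, giving $\langle-1\rangle=Z(\nonsplit{C_2}{G})$. To exhibit the extension concretely as signed permutations I would use Schur's construction (equivalently, the even Clifford algebra): this produces lifts $\tilde s_1,\dots,\tilde s_{n-1}$ of the adjacent transpositions subject to the braid relations up to sign, with $\tilde s_i^2=\pm1$ and the crucial relation that lifts of disjoint transpositions anticommute, $\tilde s_i\tilde s_j=-\tilde s_j\tilde s_i$ for $|i-j|\geq2$. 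Checking that these signed relations are consistent, so that they define a group of order $2|G|$ projecting onto $G$, is the technical core of the argument, and I expect it to be the main obstacle.

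To count the covers I pass to cohomology, since equivalence classes of central extensions of $G$ by $C_2$ are classified by $H^2(G,C_2)\cong\mathrm{Hom}(\mathbb{M}(G),C_2)\oplus\mathrm{Ext}^1(G^{\mathrm{ab}},C_2)$. For $G=A_n$ with $n\geq5$ the group is perfect, so the second summand vanishes; by Lemma \ref{ansnmult} the multiplier $\mathbb{M}(A_n)$ is $C_2$ or $C_6$, and in either case $\mathrm{Hom}(\mathbb{M}(A_n),C_2)\cong C_2$, so there is a unique nontrivial class and hence a unique double cover $\nonsplit{C_2}{A_n}$. For $G=S_n$ we have $S_n^{\mathrm{ab}}=C_2$ and $\mathbb{M}(S_n)=C_2$, so $H^2(S_n,C_2)\cong C_2\times C_2$, giving four extensions. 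Restriction to $A_n$ kills the $\mathrm{Ext}^1$-summand and is an isomorphism on the $\mathrm{Hom}$-summand, so exactly two of the four classes restrict to the nontrivial cover of $A_n$; these are the two whose central $-1$ lies in the derived subgroup, that is, the stem extensions realisable as signed permutations, namely $\nonsplit{C_2}{S_n}^+$ and $\nonsplit{C_2}{S_n}^-$, distinguished by whether $\tilde s_i^2=1$ or $\tilde s_i^2=-1$.

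For the derived-subgroup claim I use perfectness. Since the covers are stem extensions, $-1\in(\nonsplit{C_2}{A_n})'$, and as $A_n$ is perfect we have $(\nonsplit{C_2}{A_n})'\langle-1\rangle=\nonsplit{C_2}{A_n}$; together these force $(\nonsplit{C_2}{A_n})'=\nonsplit{C_2}{A_n}$. For $S_n$ the quotient $\nonsplit{C_2}{S_n}/\nonsplit{C_2}{A_n}\cong C_2$ is abelian, so $(\nonsplit{C_2}{S_n})'\leq\nonsplit{C_2}{A_n}$, while $\nonsplit{C_2}{A_n}=(\nonsplit{C_2}{A_n})'\leq(\nonsplit{C_2}{S_n})'$; hence $(\nonsplit{C_2}{G})'=\nonsplit{C_2}{A_n}$ in all three cases, and this single group serves as the common derived subgroup. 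To separate $\nonsplit{C_2}{S_n}^+$ and $\nonsplit{C_2}{S_n}^-$ I would use element orders: any isomorphism fixes the unique central involution $-1$ and so descends to $\Aut(S_n)$, which for $n\neq6$ is inner and preserves the class of transpositions, whose preimages have order $2$ in one cover and order $4$ in the other --- a contradiction; equivalently one compares the number of involutions. (The case $n=6$ must be handled with care because of the exceptional outer automorphism of $S_6$; in all cases, though, the two covers are inequivalent as central extensions, which is the content recorded by the superscripts $\pm$.)

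Finally, the square of a signed double transposition follows at once from the anticommuting relation. Such an element is a lift $\tilde u\tilde v$ of a product $(ab)(cd)$ of two disjoint transpositions, and since $\tilde u\tilde v=-\tilde v\tilde u$ we get $(\tilde u\tilde v)^2=\tilde u\tilde v\tilde u\tilde v=-\tilde u^2\tilde v^2=-(\pm1)(\pm1)=-1$, the value being independent of the cover and of the chosen lift. This yields $z^2=-1\in Z$, as required.
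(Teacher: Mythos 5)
The paper does not actually prove this lemma: both it and Lemma \ref{ansnmult} are quoted from \cite{TFSG}*{Chapter 2.7.2}, so there is no in-paper argument to compare yours against. Your outline is the standard one and most of it is sound: universal coefficients gives $H^2(A_n,C_2)\cong C_2$ and $H^2(S_n,C_2)\cong C_2\times C_2$; the two stem classes for $S_n$ are precisely the ones restricting nontrivially to $A_n$; perfectness of $A_n$ yields the derived-subgroup statements; and the anticommutation relation gives $z^2=-1$ in one line. The genuine gap is the one you flag yourself and then do not fill: nothing in the cohomological count tells you that the nontrivial extensions admit lifts $\tilde s_i$ of the adjacent transpositions satisfying $\tilde s_i\tilde s_j=-\tilde s_j\tilde s_i$ for $|i-j|\geq 2$ (nor, more generally, that lifts of \emph{arbitrary} disjoint transpositions anticommute, which is what your final computation for a double transposition $(ab)(cd)$ actually needs). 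Cohomology tells you how many central extensions exist, not which relations hold in them; the identity $z^2=-1$ for signed double transpositions --- the only part of the lemma the paper uses downstream, in Lemma \ref{no2n} --- therefore rests entirely on the explicit Schur/Clifford construction that you defer. That construction is exactly what the cited source supplies, so your proposal is an accurate reduction of the lemma to the literature rather than a self-contained proof.

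Your caution about $n=6$ is well placed, and in fact the lemma as stated is too strong there. The outer automorphism of $S_6$ swaps transpositions with triple transpositions, and the computation $(\tilde u\tilde v\tilde w)^2=-\tilde u^2\tilde v^2\tilde w^2$ shows that lifts of these two classes have orders $2$ and $4$ in opposite patterns in the two covers; pulling back along the outer automorphism therefore interchanges the two stem classes, so $\nonsplit{C_2}{S_6}^+$ and $\nonsplit{C_2}{S_6}^-$ are isomorphic as abstract groups even though they are inequivalent as central extensions. Your involution-counting argument correctly settles $n\neq 6$, and your fallback --- that the superscripts record inequivalent extensions rather than non-isomorphic groups --- is the correct reading at $n=6$. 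None of this affects the rest of the paper, which only uses $\nonsplit{C_2}{A_n}$ and the relation $z^2=-1$.
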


Using the above result, we can prove the following.

\begin{lemma}\label{no2n}
Let $G\cong{}\nonsplit{C_2}{A_n}$ or $\nonsplit{C_2}{S_n^\pm}$, for some $n\geq{}5$. There is no faithful action of $G$ on $2n$ points.\end{lemma}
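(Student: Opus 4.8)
The plan is to argue by contradiction: suppose $G$ acts faithfully on a set $\Omega$ with $|\Omega|=2n$, write $Z=\langle-1\rangle=Z(G)$, and let $\pi\colon G\to{}G/Z$ be the projection onto $A_n$ or $S_n$ as in Lemma~\ref{2ansn}. The engine of the whole argument is the observation from Lemma~\ref{2ansn} that every signed double transposition squares to $-1$, and so has order $4$. Since $n\geq5$ the permutation $(12)(34)$ lies in $A_n$, so every preimage of it in $\nonsplit{C_2}{A_n}=G'$ has order $4$, whereas in any subgroup of $G$ mapping isomorphically onto a subgroup of $G/Z$ that contains $(12)(34)$ the corresponding element would have order $2$. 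This already shows that $\nonsplit{C_2}{A_n}$ admits no complement to $Z$, and I intend to reuse this ``order-$4$ obstruction'' at every splitting that arises.

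First I would reduce to a transitive action. Because $-1$ is central and nontrivial, it acts nontrivially on some $G$-orbit $\Delta$; fixing $\omega\in\Delta$, centrality of $-1$ together with transitivity on $\Delta$ forces $Z\cap{}G_\omega=1$. The kernel $K$ of the action of $G$ on $\Delta$ is a normal subgroup of $G$ contained in $G_\omega$, so $K\cap{}Z=1$. If $K\neq1$, then $KZ/Z\cong{}K$ is a nontrivial normal subgroup of $G/Z$, which, using simplicity of $A_n$ and the normal subgroup structure of $S_n$, forces $KZ$ to be $G$ or $\nonsplit{C_2}{A_n}$ with $K$ a complement to $Z$ — contradicting the order-$4$ obstruction. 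Hence $K=1$, so $G$ already acts faithfully and transitively on $\Delta$, and $|\Delta|\leq|\Omega|=2n$.

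Next I would bound $|\Delta|$ from below. Writing $H=G_\omega$ and $\bar{H}=\pi(H)$, the condition $H\cap{}Z=1$ gives $H\cong\bar{H}$ and $|\Delta|=|G:H|=2\,|G/Z:\bar{H}|$. The subgroup $\bar{H}$ is proper (otherwise $H$ is a complement to $Z$ in $G$), and in the $S_n$ case $\bar{H}\neq{}A_n$ (otherwise $H$ is a complement to $Z$ in $\nonsplit{C_2}{A_n}$); both possibilities are excluded by the order-$4$ obstruction. A proper subgroup of $A_n$ is core-free by simplicity, and a proper subgroup of $S_n$ other than $A_n$ is core-free as well, so Lemma~\ref{ansnrep} gives $|G/Z:\bar{H}|\geq{}n$ and thus $|\Delta|\geq2n$. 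Combining with $|\Delta|\leq2n$ forces $|\Delta|=2n$ and $|G/Z:\bar{H}|=n$.

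Finally, I would invoke the classification of index-$n$ subgroups of $A_n$ and $S_n$: for $n\geq5$ these are exactly the point stabilisers $A_{n-1}$, respectively $S_{n-1}$ (together with the two exotic classes isomorphic to $A_5$, respectively $S_5$, when $n=6$). Each such $\bar{H}$ contains the double transposition $(12)(34)$, so $\pi^{-1}(\bar{H})$ contains a signed double transposition and is therefore a non-split central extension of $\bar{H}$ by $Z$. But $H\leq\pi^{-1}(\bar{H})$ maps isomorphically onto $\bar{H}$, that is, $H$ is a complement to $Z$ in $\pi^{-1}(\bar{H})$ — the final contradiction. I expect the main obstacle to be exactly this last identification: one must ensure the relevant index-$n$ subgroup genuinely contains a double transposition so that the order-$4$ obstruction applies, which is why the classification of minimal-index subgroups (including the exotic $n=6$ cases) is needed; every other step reduces to the single mechanism that signed double transpositions have order $4$.
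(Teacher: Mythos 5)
Your proof is correct, and it runs on the same engine as the paper's: Lemma~\ref{2ansn}'s fact that every signed double transposition squares to $-1$, so that a point stabiliser meeting $Z$ trivially cannot map isomorphically onto a subgroup of $A_n$ or $S_n$ containing a double transposition. The packaging, however, is genuinely different. The paper passes to the induced action on the $n$ orbits of $Z$ (using that $Z$ is semiregular), identifies that degree-$n$ action with the natural one via uniqueness of the faithful degree-$n$ representation when $n\neq6$, and then disposes of $n=6$ by a separate inspection showing that the minimal faithful degree of $\nonsplit{C_2}{A_6}$ and $\nonsplit{C_2}{S_6^\pm}$ is $80$. You instead work downstairs with the stabiliser: the order-$4$ obstruction rules out $\bar{H}=G/Z$ and $\bar{H}=A_n$, core-freeness plus Lemma~\ref{ansnrep} then forces $|G/Z:\bar{H}|\geq n$, hence $=n$, and the classification of index-$n$ subgroups (point stabilisers, plus the exotic transitive $A_5\leq A_6$ and $S_5\leq S_6$) shows $\bar{H}$ always contains an element of cycle type $(2,2)$, so the same obstruction finishes uniformly. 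This buys two things: the $n=6$ case needs no computation, since the exotic index-$6$ subgroups still contain double transpositions; and your initial reduction to a faithful transitive constituent covers intransitive actions, which the lemma's statement nominally allows but which the paper's semiregularity argument tacitly assumes away. Two cosmetic points: for the exotic $n=6$ subgroups you should say ``a double transposition'' rather than ``the double transposition $(12)(34)$'' (only the cycle type matters for the order-$4$ property), and in the step where $\bar{H}=G/Z=S_n$ or $KZ=G$ you should note explicitly that one intersects with $\nonsplit{C_2}{A_n}$ to produce the forbidden complement there.
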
 \begin{proof}
We adopt the notation of Lemma \ref{2ansn}. Namely, we let $Z$ be the center of $G$, $z$ be a generator of $Z$, and $\pi:G\to{}G/Z\leq{}{}S_n$ is the natural projection of $G$ onto $S_n$. We proceed by assuming $\rho$ is a faithful permutation representation of $G$ on $2n$ points, and establishing a contradiction. Note that since the natural projection $\pi$ is a mapping into $S_n$, we may view $\pi$ as an unfaithful permutation action of $G$ on $n$ points. As $Z$ is a normal subgroup, any faithful action $\rho:G\to{}S_{2n}$, induces an action $\overline{\rho}:G\to{}S_n$ on the orbits of $Z\cong{}C_2$, with kernel $Z$. Note that since $Z$ is cyclic of prime order, it must be semiregular, and hence

$$\overline{\rho}(G_\omega)\cong{}\frac{ZG_\omega{}}{Z}\cong{}\frac{G_\omega}{G_\omega\cap{Z}}=\frac{G_\omega}{1}.$$

If $n\ne{}6$, then there exist unique faithful actions of $A_n$ and $S_n$ on $n$ points, and so $\overline\rho:G\to{}G/Z\leq{}S_n$ must be $S_n$-conjugate to the natural projection $\pi$. That is, there exists some $\sigma\in{}S_n$ for which $\overline{\rho}(g)=\pi(g)^\sigma$ for all $g\in{}G$. This establishes a permutational isomorphism between $\overline{\rho}$ and $\pi$. Let $\omega$ be a point in the $2n$-element set $G$ acts on. As $\overline{\rho}$ is permutationally isomorphic to $\pi$, and $r\geq{}5$, we may assume that $\overline{\rho}(G_\omega)\cong{}A_{r-1}\geq{}A_4$, and hence contains a double transposition. This implies that $G_\omega$ contains a signed double transposition $z$. By Lemma \ref{2ansn}, the square of any signed double transposition is the central involution $-1$. As $z\in{}G_\omega$ and $z^2=-1$, we have that $Z\leq{}G_\omega$. But then since $Z$ is normal, it must stabilise every point, contradicting that $\rho$ was faithful. Hence, $n=6$.

By inspection, the minimal degree of a faithful representation of $\nonsplit{C_2}{A_6}$ or $\nonsplit{C_2}{S_6^\pm}$ is  $80$, and hence these groups cannot induce an action of degree $6$ on blocks of size $2$. Thus no faithful action of $G$ on $2n$ points exists.
\end{proof}

\subsection{Numerical Lemmas}

Finally, we need two simple numerical results, which we establish now.

\begin{lemma}\label{indlem}
If $m>r\geq{}5$, then $mr!<4^{mr}$.
\end{lemma}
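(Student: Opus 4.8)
The plan is to take logarithms to base $4$ and reduce the claim to an elementary polynomial inequality in $m$ and $r$. Since $mr! < 4^{mr}$ is equivalent to $\log_4(mr!) < mr$, and $\log_4(mr!) = \log_4 m + \log_4(r!)$, it suffices to prove
\[
\log_4 m + \log_4(r!) < mr.
\]
I would bound the two summands on the left separately, and then show that the product $mr$ on the right has enough room to absorb both once we exploit $m > r \ge 5$.

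For the factorial term I would use the crude bound $r! < r^r$, valid for every integer $r \ge 2$ since each of the $r$ factors is at most $r$ while at least one is strictly smaller; this gives $\log_4(r!) < r\log_4 r$. For the remaining term I would use the trivial estimate $\log_4 m \le m$, which holds for all $m \ge 1$ because $4^m \ge m$. Summing the two bounds reduces the claim to showing
\[
m + r\log_4 r < mr.
\]

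To finish, I would first note that $\log_4 r < r - 1$ for every integer $r \ge 5$: this is equivalent to $r < 4^{r-1}$, which already holds at $r = 5$ (as $4^4 = 256 > 5$) and a fortiori for larger $r$. Hence $r\log_4 r < r(r-1)$, and it is enough to verify $m + r(r-1) \le mr$. But
\[
mr - m - r(r-1) = m(r-1) - r(r-1) = (m-r)(r-1),
\]
which is strictly positive since $m > r \ge 5$. Thus $m + r(r-1) < mr$, and reading the chain of inequalities back through the logarithm yields $mr! < 4^{mr}$.

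I do not anticipate a genuine obstacle here: every step is an elementary estimate, and the factorial is tamed by the generous exponential on the right. The only mild subtlety is the additive term $\log_4 m$, which cannot be folded into the factorial bound and must be carried along separately; it is controlled by the slack $(m-r)(r-1)$ in the reduced inequality, which is positive precisely because $m$ strictly exceeds $r$.
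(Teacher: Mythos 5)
Your proof is correct, and it takes a genuinely different route from the paper. The paper argues by induction on $r$: the base case $r=5$ reduces to the numerical check $120m < 1024^m$ for $m>5$, and the inductive step absorbs the extra factor $r+1$ in $m(r+1)! = mr!\,(r+1)$ into the factor $4^m$ gained on the right, using $m>r$ to get $(r+1)/4^m \le (r+1)/4^r < 1$. You instead give a direct, non-inductive estimate: taking $\log_4$ and using the crude bounds $r!<r^r$, $\log_4 m\le m$, and $\log_4 r< r-1$, you reduce everything to the polynomial identity $mr-m-r(r-1)=(m-r)(r-1)>0$. All three auxiliary bounds are valid in the stated range and the final chain of inequalities is strict where it needs to be, so the argument is complete. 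What your approach buys is transparency about where the slack comes from (the term $(m-r)(r-1)$ quantifies it explicitly), and it in fact establishes the inequality under weaker hypotheses than $r\ge 5$ (only $m>r\ge 2$ is really used); what the paper's induction buys is brevity, since each step is a one-line manipulation with no logarithms. Either proof serves the lemma's role in the paper equally well.
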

\begin{proof}
We proceed by induction on $r$. If $r=5$ then $mr!<4^{mr}$.
Here, $mr!=120m$ and $4^{mr}=(4^5)^m=1024^m$. Clearly $1024^m>120m$ for all $m>5$, and hence the base case is established. For the inductive step, we assume $mr!<4^{mr}$ and show $m(r+1)!<4^{m(r+1)}$. Now, by the inductive hypothesis, 
\begin{align*} m(r+1!)&=mr!(r+1)<4^{mr}(r+1)=4^{m(r+1)}\left(\frac{r+1}{4^m}\right),\end{align*}
and since $m>r$, we have $$4^{m(r+1)}\frac{r+1}{4^m}\leq{}4^{m(r+1)}\left(\frac{r+1}{4^r}\right)<4^{m(r+1)},$$
and so by induction the lemma holds for all $m>r\geq{}5$.
\end{proof}

\begin{lemma}\label{numlem2}
If $a,b\geq{}2$, then either ${a(\sqrt{b}-1)\geq{}\sqrt{ab}-1}$, or $b=2$ and $a\leq{}5$. 
\end{lemma}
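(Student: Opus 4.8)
The plan is to clear the square roots by the substitution $x=\sqrt{a}$, $y=\sqrt{b}$, and then factor. Writing the target inequality $a(\sqrt{b}-1)\geq\sqrt{ab}-1$ in these variables gives $x^2y-x^2-xy+1\geq 0$, and the left-hand side factors cleanly as $(x-1)(xy-x-1)$ (one checks $(x-1)(xy-x-1)=x^2y-x^2-xy+1$ by direct expansion). Since $a\geq 2$ forces $x=\sqrt{a}\geq\sqrt{2}>1$, the factor $x-1$ is strictly positive, so dividing through by it shows that the desired inequality is \emph{equivalent} to the single-variable condition $xy-x-1\geq 0$, that is, to $\sqrt{a}(\sqrt{b}-1)\geq 1$. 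This reduction to a one-variable threshold is the conceptual heart of the argument; everything after it is elementary.

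With this reformulation in hand I would split on the value of $b$. If $b\geq 3$, then $\sqrt{b}-1\geq\sqrt{3}-1$ and $\sqrt{a}\geq\sqrt{2}$, so $\sqrt{a}(\sqrt{b}-1)\geq\sqrt{2}(\sqrt{3}-1)>1$ (numerically $\sqrt{2}(\sqrt{3}-1)\approx 1.035$), and the inequality holds for every admissible $a$. If instead $b=2$, the condition $\sqrt{a}(\sqrt{2}-1)\geq 1$ rearranges to $\sqrt{a}\geq(\sqrt{2}-1)^{-1}=\sqrt{2}+1$, i.e. $a\geq(\sqrt{2}+1)^2=3+2\sqrt{2}\approx 5.83$. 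As $a$ is a positive integer this holds exactly when $a\geq 6$ and fails exactly when $2\leq a\leq 5$, which is precisely the stated exceptional case. Thus either the inequality holds, or $b=2$ and $a\leq 5$.

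The one point I would be careful about — and the only place the statement could go wrong — is that it is correct only because $a$ is an \emph{integer}: for a real number $a$ with $5<a<3+2\sqrt{2}$ and $b=2$ the inequality genuinely fails while $a\leq 5$ does not hold. So the argument depends on integrality, and the threshold $3+2\sqrt{2}\approx 5.83$ is exactly what lets the crude cutoff $a\leq 5$ capture all the failures. Beyond getting the factorization right, I do not anticipate any real obstacle; the remainder is a two-case analysis resting on the single numerical comparison $\sqrt{2}(\sqrt{3}-1)>1$.
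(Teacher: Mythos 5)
Your proof is correct and follows essentially the same route as the paper: both reduce the inequality to the equivalent threshold condition $\sqrt{b}\geq 1+\tfrac{1}{\sqrt{a}}$ (the paper by rearranging and dividing by $a-\sqrt{a}>0$, you by factoring $(x-1)(xy-x-1)$ and cancelling $x-1>0$) and then settle the cases $b\geq 3$ and $b=2,\ a\geq 6$ by the same elementary numerical comparisons. Your explicit observation that the exceptional case $b=2,\ a\leq 5$ is correct only because $a$ is an integer (the true threshold being $3+2\sqrt{2}\approx 5.83$) is a point the paper leaves implicit when it asserts that its Claim~2 is ``logically equivalent'' to the lemma.
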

\begin{proof} We proceed by solving the inequality for $b$, and then bounding the values of $a$ and $b$ which do not satisfy the inequality.

\noindent\underline{Claim 1:}
$a\sqrt{b}-a\geq{}\sqrt{ab}-1$ if and only if $b\geq{}\left(1+\frac{1}{\sqrt{a}}\right)^2.$
\begin{claimproof}
Assume that $a\sqrt{b}-a\geq{}\sqrt{ab}-1$. Rearranging, we have $\sqrt{b}(a-\sqrt{a})\geq{}a-1$, and so
$$\sqrt{b}\geq{}\frac{a-1}{a-\sqrt{a}}=\frac{(\sqrt{a}+1)(\sqrt{a}-1)}{\sqrt{a}(\sqrt{a}-1)}=\frac{\sqrt{a}+1}{\sqrt{a}}=1+\frac{1}{\sqrt{a}}.$$ Hence $b\geq{}(1+\frac{1}{\sqrt{a}})^2$, as required. Each implication in this demonstration is bidirectional, and so the converse also holds, establishing the claim.
\end{claimproof}

\noindent\underline{Claim 2:}
If $a\geq{}6$ or $b\geq{}3$, then $a\sqrt{b}-a\geq{}\sqrt{ab}-1$.
\begin{claimproof}
If $a\geq{}6$, then since $\left(1+\frac{1}{\sqrt{a}}\right)^2$ is decreasing we have $$\left(1+\frac{1}{\sqrt{a}}\right)^2\leq{}\left(1+\frac{1}{\sqrt{6}}\right)^2<2\leq{}b,$$ and so by Claim $1$ we have that $a\sqrt{b}-a\geq{}\sqrt{ab}-1$, as required. If $b\geq{}3$, then since $a\geq{}2$ and $\left(1+\frac{1}{\sqrt{a}}\right)^2$ is decreasing, we have $$\left(1+\frac{1}{\sqrt{a}}\right)^2\leq{}\left(1+\frac{1}{\sqrt{2}}\right)^2<3\leq{}b,$$ and so by Claim $1$ we have that $a\sqrt{b}-a\geq{}\sqrt{ab}-1$, as required.\end{claimproof} \\

Note that Claim $2$ is logically equivalent to the statement of the lemma, and so the result is proven.\end{proof}

\section{Covers}\label{S3}

The goal of this section is to establish a classification of semiprimitive groups which induce the alternating or symmetric group on the set of orbits of a minimal normal subgroup. In order to do this, we first prove a lemma about the structure of semiprimitive groups in terms of the centralisers of their antiplinths.

\subsection{Covers of Semiprimitive Groups}\label{spn}

For the remainder of this section, we adopt the following notation, which applies to an arbitrary semiprimitive group with a minimal normal antiplinth:

\begin{enumerate}
    \item $G\leq{}S_\Omega$ is a finite semiprimitive group with minimal normal antiplinth $M$,
    \item $M\cong{}T^d$ for some simple group $T$ and positive integer $d$,
    \item $H:=C_G(M)$ is the centraliser of $M$,
    \item $\Delta:=\Omega/M$ and $\Sigma:=\Omega/H'$ are the sets of orbits of $M$ and $H'$, respectively, 
    \item $\omega\in\Omega$, and $\delta:=\omega^M\in\Delta$ and $\sigma:=\omega^{H'}\in\Sigma$ are the $M$- and $H'$-orbits of $\omega$,
    \item $m:=|M|$ is the order of $M$, and
    \item $r:=|\Delta|$ is the number of $M$-orbits.
\end{enumerate}

Note that as $G$ is semiprimitive and $M$ is an intransitive normal subgroup of $G$, it follows from Lemma \ref{SPPS} that $G^\Delta\cong{}G/M$.

\begin{lemma}\label{trilema}
Suppose $MH>{}M$. Then, exactly one of the following holds:
\begin{enumerate}
    \item $M\leq{}Z(H)\cap{}H'$, $H'$ is transitive, and $M\lesssim{}{\mathbb{M}}(H^\Delta)$,
    \item $M\leq{}Z(H)\cap{}H'$, $H'$ is intransitive, $M\lesssim{}{\mathbb{M}}(H^\Delta)$, and $H^\Delta$ is an elementary abelian $q$-group for some prime $q$,
    \item $M\cap{}H'=1$, $H'$ is transitive, and $M\cong{}(H')_\delta/(H')_\omega$, or
    \item $M\cap{}H'=1$, $H'$ is intransitive, and $M_\sigma\cong{}(H')_\delta$.
\end{enumerate}
\end{lemma}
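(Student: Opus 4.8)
The plan is to run a single case split driven by the normal subgroup $M\cap H'$, using the minimality of $M$ to pin it down, and then to read off the four alternatives according to the transitivity of $H'$. First I would record the standing facts. Since $M$ is an intransitive normal subgroup of the semiprimitive group $G$, it is semiregular, so $M_\omega=1$ and $M$ is regular on each orbit $\delta\in\Delta$. As $M\trianglelefteq G$, the centraliser $H=C_G(M)$ is normal in $G$, hence so is its characteristic subgroup $H'$; in particular $H'$ is transitive or semiregular. By Lemma \ref{SPPS} the kernel of the $G$-action on $\Delta$ is $M$, so $G^\Delta\cong G/M$, and by Lemma \ref{SMAQP} this group is quasiprimitive on $\Delta$. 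The crucial observation is that $M\cap H'$ is a normal subgroup of $G$ lying inside the minimal normal subgroup $M$, so either $M\cap H'=1$ or $M\le H'$. In the latter case $M\le H'\le H=C_G(M)$ forces every element of $M$ to centralise $M$, so $M$ is abelian and $M\le Z(H)$; thus $M\le Z(H)\cap H'$. This is exactly the dichotomy separating $(1),(2)$ from $(3),(4)$, and since $M\neq1$ the alternatives $M\le H'$ and $M\cap H'=1$ are incompatible, so together with the transitive/intransitive split for $H'$ the four cases are pairwise exclusive and exhaustive.

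For the branch $M\le Z(H)\cap H'$, the relation $M\le H$ gives $H\cap M=M$, whence $H^\Delta\cong H/M$ and the quotient map $H\to H^\Delta$ has kernel $M\le Z(H)\cap H'$; the stem extension lemma then yields $M\lesssim\mathbb{M}(H^\Delta)$, supplying the remaining content of both $(1)$ and $(2)$. In case $(2)$ I must additionally show $H^\Delta$ is an elementary abelian $q$-group. Since $MH>M$ and $M\le H$, we have $H^\Delta\cong H/M\neq1$, and being a nontrivial normal subgroup of the quasiprimitive group $G^\Delta$ it is transitive on $\Delta$. Its derived subgroup $(H^\Delta)'$ is the image of $H'$ acting on $\Delta$; if that image were transitive then, because $M\le H'$ acts regularly inside each block, $H'$ would be transitive on all of $\Omega$, contrary to the case $(2)$ hypothesis. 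Hence $(H^\Delta)'$ is a non-transitive normal subgroup of the quasiprimitive $G^\Delta$, so it is trivial, $H^\Delta$ is abelian and therefore regular on $\Delta$; a minimal normal subgroup of $G^\Delta$ contained in $H^\Delta$ is then transitive and so equals $H^\Delta$, making $H^\Delta$ minimal normal and abelian, i.e. elementary abelian of prime exponent $q$. I expect this propagation-of-transitivity step, together with correctly invoking quasiprimitivity for $(H^\Delta)'$, to be the main obstacle.

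For the branch $M\cap H'=1$, the subgroups $M$ and $H'\le C_G(M)$ commute and intersect trivially, so $MH'=M\times H'$. I would pass to the orbit $\Omega_0:=\omega^{M\times H'}$, on which $M\times H'$ is transitive, and apply Lemma \ref{subcartesian} with its ``$H$'' taken to be $H'$. Part $(3)$ gives $M_\sigma\cong(MH')_\omega/(H')_\omega$ and $(H')_\delta\cong(MH')_\omega/M_\omega$, with $M_\omega=1$ by semiregularity of $M$. If $H'$ is transitive (case $(3)$) then $\sigma=\Omega$ and $M_\sigma=M$, and tracking the isomorphism $(MH')_\omega\cong(H')_\delta$ supplied by the proof of Lemma \ref{subcartesian} (which sends $ab\mapsto b$) carries $(H')_\omega$ to $(H')_\omega$, yielding $M\cong(H')_\delta/(H')_\omega$. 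If $H'$ is intransitive (case $(4)$) then it is semiregular, so $(H')_\omega=1$ as well, and the two displayed isomorphisms collapse to $M_\sigma\cong(MH')_\omega\cong(H')_\delta$, as required. Throughout, the hypothesis $MH>M$ excludes the degenerate possibilities $H=1$ and $H=M$, guaranteeing the nontriviality used in the case $(2)$ argument and in passing to $\Omega_0$.
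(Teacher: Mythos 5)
Your proof is correct and follows the same overall strategy as the paper's: split on whether $M\cap H'$ is $1$ or all of $M$ using minimality of $M$, then on the transitivity of $H'$, invoking the stem extension lemma for the first two cases and Lemma \ref{subcartesian} for the last two. You deviate in two sub-cases, both times defensibly. In case $(2)$ the paper first uses the antiplinth property to force $H'=M$ and then appeals to the O'Nan--Scott theorem for quasiprimitive groups to identify $G^\Delta$ as affine with socle $H^\Delta$; you instead observe that $(H^\Delta)'=(H')^\Delta$ is intransitive (transitivity of $(H')^\Delta$ together with the regularity of $M\leq H'$ on each block would make $H'$ transitive on $\Omega$), hence trivial by quasiprimitivity, so that $H^\Delta$ is abelian, regular, and coincides with a minimal normal subgroup of $G^\Delta$ --- giving elementary abelian without any appeal to O'Nan--Scott. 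In case $(4)$ the paper needs $MH'$ transitive on $\Omega$ in order to apply Lemma \ref{subcartesian}, which it extracts from the antiplinth property (with a separate degenerate treatment of $H'=1$); you instead restrict to the orbit $\omega^{MH'}$, on which $M\times H'$ is automatically transitive and which contains both $\delta$ and $\sigma$, so the same stabiliser computations go through and the case $H'=1$ is absorbed. The one small inaccuracy is your closing remark: $MH>M$ excludes $H=1$ and $H=M$ but does not exclude $H'=1$; fortunately your orbit-restriction argument handles $H'=1$ anyway, so nothing is lost.
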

\begin{proof}
As $H'$ is a characteristic subgroup of $H$, which in turn is a normal subgroup of $G$, $H'$ is normal in $G$. Since $M$ is a minimal normal subgroup of $G$, and $H'$ is a normal subgroup of $G$, we have that $M\cap{}H'=M$ or $1$.

If $M\cap{}H'=M$, then as $H=C_G(M)$, we have $M\leq{}Z(H)$, and hence $M\leq{}Z(H)\cap{}H'$. Hence $G$ is a stem extension of $M$, and so $M\lesssim{}\mathbb{M}(H/M)=\mathbb{M}(H^\Delta)$. Note in particular that $M$ is abelian. If $H'$ is transitive we are done, and this is case $(1.)$ of the Lemma. If $H'$ is intransitive, then since $M$ is an antiplinth and $M\leq{}H'$, we have $M=H'$. Now $H^\Delta\cong{}H/M=H/H'$, and hence $H^\Delta$ is abelian. By Lemma \ref{SMAQP}, $G^\Delta$ is quasiprimitive. As $H^\Delta$ is a normal subgroup of $G^\Delta$, it must be that $H^\Delta$ is either transitive or trivial. If $H^\Delta$ is trivial, then $H\leq{}M$, and hence $H$ is abelian. This implies that $M=H'=1$, contradicting that $M$ is nontrivial. Hence we may assume that $H^\Delta$ is transitive, and since $H^{\Delta}$ is abelian, it is hence regular. By the O'Nan-Scott Theorem \cite{OnanScott}, this implies that $G^\Delta$ is an affine quasiprimitive group with socle $H^\Delta$, and hence $G^\Delta\cong{}{C_q^b}\rtimes{}(G^\Delta)_\delta$ for some prime $q$, integer $b$, and block $\delta\in\Delta$. In particular, $H^\Delta\cong{}C_q^b$ is an elementary abelian $q$-group. This establishes case $(2.)$ of the Lemma.

If $M\cap{}H'=1$, then as $M$ and $H'$ centralise each other, we have $MH'\cong{}M\times{}H'$. If $H'$ is transitive, then $\Omega=\omega^{H'}=\sigma$. This implies that $M_\sigma=M$, and so by part $(3.)$ of Lemma \ref{subcartesian} applied to $M\times{}H'$, we have $M=M_\sigma\cong{}(MH')_\omega/(H')_\omega={(H')_\delta}/{(H')_\omega}$. This is case $(3.)$ of the Lemma. If $H'$ is intransitive, then as a normal subgroup of a semiprimitive group, it is semiregular. If $H'$ is nontrivial, then since $M$ is an antiplinth, $MH'$ must be transitive. Now, $M_\omega=1=(H')_\omega$ and so by part $(3.)$ of Lemma \ref{subcartesian} we have $M_\sigma\cong{}(MH')_\omega\cong{}(H')_\delta.$ If $H'$ is trivial, then $H'=1$, so $(H')_\delta=1$ and $\sigma=\omega^{H'}=\{\omega\}$. Hence $M_\sigma=M_\omega=1=(H')_\delta$. This establishes case $(4.)$ of the Lemma.
\end{proof}

If $(H')_\delta$ has a unique minimal normal subgroup, as will turn out to be the case when we restrict our attention to the covers of alternating and symmetric groups in the next section, then we can say more about the isomorphism class of $T$. 

\begin{definition}
A group is \textit{monolithic} if it has a unique minimal normal subgroup.
\end{definition}

\begin{lemma}\label{tricor}
In case $(4.)$ of Lemma \ref{trilema}, if $(H')_\delta$ is monolithic then $T$ contains a subgroup isomorphic to $(H')_\delta$.
\end{lemma}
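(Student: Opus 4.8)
The plan is to reduce everything to an embedding question inside $M$ and then exploit the monolithic hypothesis via coordinate projections. Since we are in case $(4.)$ of Lemma \ref{trilema}, we have the isomorphism $M_\sigma\cong(H')_\delta$; thus it suffices to produce a copy of $M_\sigma$ inside $T$. Recall $M\cong T^d=T_1\times\cdots\times T_d$ with each $T_i\cong T$, and $M_\sigma\leq M$, so the natural object to study is the family of coordinate projections $\pi_i:M\to T_i$ restricted to $M_\sigma$.

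First I would record the elementary structural facts about these projections. Each restriction $\pi_i|_{M_\sigma}\colon M_\sigma\to T_i$ is a homomorphism whose kernel $K_i:=M_\sigma\cap\prod_{j\neq i}T_j$ is a \emph{normal} subgroup of $M_\sigma$. Moreover $\bigcap_{i=1}^{d}K_i=M_\sigma\cap\{1\}=1$, because any element of $M$ lying in every $K_i$ projects trivially to each simple direct factor and is therefore the identity of $M$.

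The crucial step is to bring in the hypothesis that $(H')_\delta$, and hence $M_\sigma$, is monolithic. Let $R$ denote its unique minimal normal subgroup. In a finite monolithic group every nontrivial normal subgroup contains the unique minimal normal subgroup (a minimal element among the nontrivial normal subgroups contained in a given one is itself minimal normal, hence equals $R$). Applying this to the normal subgroups $K_i$: if all of them were nontrivial, then each would contain $R$, and so would their intersection $\bigcap_i K_i$, contradicting $\bigcap_i K_i=1$ together with $R\neq 1$. Consequently some $K_j=1$, which means $\pi_j|_{M_\sigma}$ is injective and gives $M_\sigma\cong\pi_j(M_\sigma)\leq T_j\cong T$. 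Composing with $M_\sigma\cong(H')_\delta$ then exhibits $(H')_\delta$ as a subgroup of $T$, as required.

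The only place that needs genuine care, and what I would regard as the main (if modest) obstacle, is the monolithic-to-injective deduction: one must apply the defining property of monolithicity to the specific normal subgroups $K_i$ rather than to arbitrary subgroups, and be explicit that a unique minimal normal subgroup sits inside \emph{every} nontrivial normal subgroup of the finite group $M_\sigma$. The degenerate possibility $M_\sigma=1$ (equivalently $(H')_\delta=1$) is harmless, since the trivial group embeds in $T$ automatically and the argument is vacuous there. Note also that the reasoning is insensitive to whether $T$ is abelian or nonabelian, so no case split on the isomorphism type of $T$ is needed.
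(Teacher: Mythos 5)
Your proposal is correct and follows essentially the same route as the paper: both use the coordinate projections $\pi_i:M\to T_i$, the fact that the intersection of their kernels restricted to $M_\sigma$ is trivial, and the monolithic hypothesis to force some $\pi_i|_{M_\sigma}$ to be injective, then transport the embedding through $M_\sigma\cong(H')_\delta$. The only cosmetic difference is that the paper picks a nontrivial element of the unique minimal normal subgroup and finds a coordinate where it survives, whereas you argue by contradiction on the kernels $K_i$; these are logically equivalent.
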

\begin{proof}
Consider the projection mappings $\pi_i:M\to{}T_i$ from $M$ to its simple direct factors $\{T_1, ..., T_d\}$. Note that $M_\sigma=\{(\pi_1(g), ..., \pi_d(g))\mid{}g\in{}M_\sigma\}$. In particular, if, for some $g\in{}M$, $\pi_i(g)=1$ for all $i=1, ..., d$, then it must be that $g=1$. To state the contrapositive, if $g\ne{}1$, then there exists some $i$ for which $\pi_i(g)\ne{}1$. Since $M_\sigma\cong{}(H')_\delta$ in case $(4.)$ of Lemma \ref{trilema}, $M_\sigma$ has a unique minimal normal subgroup. Choose a nontrivial element $g$ in the unique minimal normal subgroup of $M_\sigma$. As $\pi_i(g)\ne1$ for some $i$, $\pi_i|_{M_\sigma}$ must be faithful, since the unique minimal normal subgroup of $M_\sigma$ is not in the kernel. Hence $M_\sigma\cong\pi_i(M_\sigma)\leq{}T_i\cong{}T$. As $M_\sigma\cong{}(H')_\delta$, we have $(H')_\delta\lesssim{}T$.
\end{proof}

\subsection{Extensions of $A_\Omega$ and $S_\Omega$}

We now use Lemma \ref{trilema} to classify the groups which induce $A_\Delta$ or $S_\Delta$ on the set $\Delta$ of orbits of a minimal normal subgroup. We restrict our attention to the case where $|\Delta|\geq{}5$. This is due to the fact that under this assumption $A_{\Delta}$ is simple and $S_\Delta$ is almost simple, which simplifies the analysis. Even with this restriction, many interesting low-degree phenomena occur. The non-simplicity of ${A_5}$'s point stabiliser, the exceptional triple covers of $A_6$ and $S_6$, and the exceptional isomorphism $A_8\cong{}\GL_2(4)$ each give rise to interesting examples of semiprimitive groups. There are also several infinite families of covers, as detailed in the following theorem. 

\begin{theorem}
Suppose $G^\Delta\geq{}A_\Delta$ and $r\geq{}5$. Then, one of the following holds:

\begin{enumerate}\label{ansnclassification}
    \item $H^\Delta=1$, and one of the following hold:
    \begin{enumerate}
        \item $M$ is abelian, and either $d\geq{}r-2$, or $(M,G^\Delta)=(C_2^4,A_7)$ or $(C_2^4,A_8)$, 
        \item $M$ is nonabelian, and $d\geq{}r$.
    \end{enumerate}
    \item $H^\Delta=G^\Delta$, and one of the following holds:
    \begin{enumerate}
        \item $G\cong{}\nonsplit{C_3}{A_6}$,
        \item $G\cong{}A_{r-1}\times{}G^\Delta$ or $C_3\times{}A_5$, and $G$ is innately transitive,
        \item $G\cong{}T\times{}G^\Delta$, and $T$ contains a subgroup isomorphic to $A_{r-1}$.
    \end{enumerate}
    \item $H^\Delta=A_\Delta$ and $G^\Delta=S_\Delta$, and one of the following holds: 
    \begin{enumerate}
        \item $G\cong{}(\nonsplit{C_3}{A_6})\rtimes{}C_2$,
        \item $G\cong{}(A_{r-1}\times{}A_r)\rtimes{}C_2$ or $(C_3\times{}A_5)\rtimes{}C_2$, and $G$ is innately transitive,
        \item $G\cong{}(T\times{}A_r)\rtimes{}C_2$ or $(T^2\times{}A_r)\rtimes{}C_2$, and $T$ contains a subgroup isomorphic to $A_{r-1}$.
    \end{enumerate}
\end{enumerate}
\end{theorem}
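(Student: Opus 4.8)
The plan is to read off the three main cases from the normal-subgroup lattice of $G^\Delta$ and then run each through Lemma \ref{trilema}. Since $H=C_G(M)$ is normal in $G$ and $M$ is the kernel of the action of $G$ on $\Delta$ (Lemma \ref{SPPS}), the image $H^\Delta=HM/M$ is a normal subgroup of $G^\Delta\cong G/M$. As $r\geq 5$, the group $G^\Delta$ is $A_\Delta$ or $S_\Delta$, whose only normal subgroups are $1$, $A_\Delta$, and (when $G^\Delta=S_\Delta$) $S_\Delta$. Hence $H^\Delta\in\{1,\,A_\Delta,\,G^\Delta\}$, and these three possibilities become cases $(1)$, $(3)$, and $(2)$ of the statement respectively.

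First I would dispose of $H^\Delta=1$. Here $H\leq M$, so $C_G(M)=Z(M)$. If $M$ is abelian then $M=H\cong C_p^d$, and the conjugation action, having kernel $C_G(M)=M$, makes $G^\Delta=G/M$ act faithfully and irreducibly on $M$; Lemma \ref{ansnrep} then yields $(1a)$. If $M$ is nonabelian then $Z(M)=1$, so $H=1$, and Corollary \ref{mnsaction} says $G^\Delta$ acts transitively on the $d$ simple factors of $M$ with soluble kernel. As $G^\Delta$ has no nontrivial soluble normal subgroup this action is faithful, and the transitive half of Lemma \ref{ansnrep} gives $d\geq r$, which is $(1b)$.

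For the remaining cases $H^\Delta\neq 1$ we have $MH>M$, so Lemma \ref{trilema} applies; its case $(2)$ is excluded at once because $H^\Delta\geq A_\Delta$ is not an elementary abelian $q$-group. In case $(1)$ of that lemma $M$ is abelian with $M\lesssim\mathbb{M}(H^\Delta)$, so by Lemma \ref{ansnmult} either $M\cong C_2$ or $M\cong C_3$; the first is impossible, since it would give a faithful action of $\nonsplit{C_2}{A_r}$ or $\nonsplit{C_2}{S_r^\pm}$ on $2r$ points, forbidden by Lemma \ref{no2n}, while the second forces $H^\Delta\in\{A_6,A_7\}$, and a short check that the preimage of a point stabiliser in $\nonsplit{C_3}{A_7}$ is the nonsplit cover $\nonsplit{C_3}{A_6}$ (hence has no complement) eliminates $A_7$; what survives is $\nonsplit{C_3}{A_6}$, giving $(2a)$ or $(3a)$ according as $H^\Delta=G^\Delta$ or $A_\Delta$. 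In cases $(3)$ and $(4)$ of Lemma \ref{trilema} we have $M\cap H'=1$, so $H'$ acts faithfully on $\Delta$ with image $(H^\Delta)'=A_\Delta$, whence $H'\cong A_r$. In case $(3)$, $H'$ is transitive; being a normal simple subgroup it is a minimal normal subgroup, so $G$ is innately transitive, and $M\cong (H')_\delta/(H')_\omega$ is a nontrivial section of $A_{r-1}$, forcing $M\cong A_{r-1}$ when $r\geq 6$ and $M\cong C_3$ (from $A_4/V_4$) when $r=5$: this is $(2b)/(3b)$. In case $(4)$, $M_\sigma\cong(H')_\delta\cong A_{r-1}$ is monolithic, so Lemma \ref{tricor} gives $T\gtrsim A_{r-1}$: this is $(2c)/(3c)$.

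Finally, I would reconstruct $G$ itself. For nonabelian $M$ one has $M\cap H=Z(M)=1$, so $HM=M\times H$ with $H\cong H^\Delta$. When $H^\Delta=G^\Delta$ this gives $G=M\times G^\Delta\cong T^d\times G^\Delta$, and since nothing in a direct product conjugates the simple factors of $M$ among themselves, minimality of $M$ forces $d=1$, producing $(2b)$ and $(2c)$. When $H^\Delta=A_\Delta<G^\Delta=S_\Delta$, the subgroup $M\times H'$ has index $2$ in $G$, and an odd lift may interchange two isomorphic factors of $M$; this is exactly what permits $M\cong T^2$ beside $M\cong T$ in $(3c)$, while the passage from $A_\Delta$ to $S_\Delta$ supplies the $\rtimes C_2$ throughout case $(3)$ (the abelian subcases $\nonsplit{C_3}{A_6}$, $C_3\times A_5$ and their $C_2$-extensions having already been pinned down above). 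The main obstacle is precisely this last step: eliminating the double covers and the triple cover $\nonsplit{C_3}{A_7}$, handling the exceptional $r=5$ behaviour where $A_{r-1}=A_4$ is non-simple, and, hardest of all, showing that the index-$2$ extension in case $(3)$ splits and realises exactly the $T$ versus $T^2$ dichotomy rather than some other amalgam.
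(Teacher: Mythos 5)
Your proposal follows essentially the same route as the paper's proof: the same case division on $H^\Delta\in\{1,A_\Delta,G^\Delta\}$, the same reduction of the trivial-centraliser case via Corollary \ref{mnsaction} and Lemma \ref{ansnrep}, the same application of Lemma \ref{trilema} (with its case (2) excluded for the same reason), the Schur multiplier together with Lemma \ref{no2n} and an inspection of $\nonsplit{C_3}{A_7}$ for the central case, and Lemma \ref{tricor} for the $T\gtrsim A_{r-1}$ conclusion. The obstacles you flag at the end --- the double covers, $\nonsplit{C_3}{A_7}$, the non-simplicity of $A_4$ at $r=5$, and the $T$ versus $T^2$ dichotomy via $d\in\{1,2\}$ --- are disposed of in the paper by precisely the arguments you sketch (and the splitting of the index-$2$ extension is asserted there no more rigorously than you do), so there is no substantive divergence.
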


\begin{proof}
Note that $r\geq{}5$, and hence that $A_\Delta$ is simple. As $H^\Delta$ is a normal subgroup of $G^\Delta$, the possibilities for $(H^\Delta, G^\Delta)$ are $(1, A_r)$, $(1, S_r)$, $(A_r, S_r)$, $(A_r, A_r)$, and $(S_r, S_r)$. These possibilities can be partitioned according to whether $G^\Delta/H^\Delta$ is equal to $G^\Delta$, $C_2$, or $1$. We analyse each case and study the implications of Lemma \ref{mnsaction} and Lemma \ref{trilema}.

To begin, note that since $M$ is minimal normal and $H'$ is normal, either $M\leq{}H'$ or $M\cap{}H'=1$. If $M\leq{}H'$, then since $M$ is an antiplinth, either $M=H'$, or $M<H'$ and $H'$ is transitive. Also, as $H=C_G(M)$, we must have that $M\leq{}Z(H)$, and hence $M\leq{}Z(H)\cap{}H'$. If $M\cap{}H'=1$, then $H'$ may be either transitive (which is the case if $G$ is innately transitive, for example), or intransitive.

\noindent\underline{Claim 1:}
If $M\cap{}H'=1$, then $(H')_\delta\cong{}((H^\Delta)')_\delta$.

\begin{claimproof}
Since $M\cap{}H'=1$, we have that $(H')_\delta\cong{}(H')_\delta{}M/M\cong{}((H')_\delta)^\Delta$. Clearly $((H')_\delta)^\Delta=((H')^\Delta)_\delta$, and since $h\mapsto{}h^\Delta$ is a homomorphism, we also have $(H')^\Delta=(H^\Delta)'$. Putting everything together, we find that $(H')_\delta\cong{}(H^\Delta)'_\delta$, as required.
\end{claimproof}

\noindent\underline{Claim 2:} If $H^\Delta=1$, then Part $(1.)$ of the theorem holds.
\begin{claimproof}
If $H^\Delta=1$, then $H\leq{}M$, and so $MH=M$ and $G/MH=G/M\cong{}G^\Delta$. By Corollary \ref{mnsaction}, $G/H$ acts faithfully on $M$. If $M$ is abelian, then since $M$ is minimal normal in $G$, we have $M\cong{}C_p^d$ for some prime $p$, and $G$ acts irreducibly on $M$ with kernel $H$. Since $M$ is abelian, $M\leq{}H$, and as $H\leq{}M$ we must have that $H=M$. Hence $G^\Delta\cong{}G/M=G/H$ acts irreducibly and faithfully on $M$. Now, by Lemma \ref{ansnrep}, either $(M,G^\Delta)=(C_2^4,A_7)$ or $(C_2^4,A_8)$, or $d\geq{}r-2$, as required. Suppose now that $M$ is nonabelian. By Corollary \ref{mnsaction}, $G^\Delta$ acts transitively and with a soluble kernel on the set $\Gamma$ of $d$ simple direct factors of $M$. As $r=|\Delta|\geq{}5$, $G^\Delta$ has no nontrivial soluble normal subgroups, and hence $G^\Delta$ acts faithfully on $\Gamma$, and by Lemma \ref{ansnrep}, $d\geq{}r$. 
\end{claimproof}

\noindent\underline{Claim 3:} If $H^\Delta=G^\Delta$, then Part $(2.)$ of the theorem holds.

\begin{claimproof}
If $H^\Delta=G^\Delta$, then by the third and first isomorphism theorems, $$\frac{G}{MH}\cong{}\frac{G/M}{MH/M}=\frac{G^\Delta}{H^\Delta}\cong1,$$ and so $G=MH$. If $K$ is a normal subgroup of $M$, then it is normalised by both $M$ and $H=C_G(M)$, and hence is normal in $G=MH$. As $M$ is minimal normal, this implies that $K=M$. Hence $M$ is simple, which is to say that $d=1$. Since $M$ is intransitive, this implies that $G=MH>M$, and so Lemma \ref{trilema} applies. We consider each of the cases of Lemma \ref{trilema} separately. Note that case $(2.)$ of Lemma \ref{trilema} cannot hold, as $G^\Delta$ has no nontrivial elementary abelian normal subgroups.

Suppose case $(1.)$ of Lemma \ref{trilema} holds. Then $M\lesssim{\mathbb{M}(H/M)}=\mathbb{M}(H^\Delta)$. Since $r\geq{}5$, we have by Lemma \ref{ansnmult} that either $\mathbb{M}(H^\Delta)=2$, or $r=6$ or $7$ and $\mathbb{M}(H^\Delta)=6$. As $n\geq{}5$, Lemma \ref{no2n} implies that the double covers of $A_r$ and $S_r$ do not have faithful actions on $2r$ points. That is, if $M\cong{}C_2$ then $G$ could not induce $A_r$ or $S_r$ on the set of $M$-orbits. This leaves only the possibilities $r=6$ or $7$, in which case $G\cong{}\nonsplit{C_3}{A_6}$ or $\nonsplit{C_3}{A_7}$, respectively. By inspection, we find that $\nonsplit{C_3}{A_6}$ has a subgroup isomorphic to $A_5$, and that $\nonsplit{C_3}{A_7}$ has no subgroup isomorphic to $A_6$. Hence, $\nonsplit{C_3}{A_6}$ has an action on $3r$ points, and $\nonsplit{C_3}{A_7}$ has no action on $3r$ points. As $\nonsplit{C_3}{A_6}$ is transitive in this action, and all of its proper normal subgroups are abelian (and hence semiregular), this action is semiprimitive. Thus Part $(2.a)$ holds.

Suppose case $(3.)$ of Lemma \ref{trilema} holds. Then Claim $1$ shows that $M$ is isomorphic to $(H')_\delta/(H')_\omega$. Now, since $G^\Delta\cong{}G/M$ and $M\cap{}H'=1$, we have $(H')^\Delta\cong{}H'$. Hence $(H')_\delta\cong{}((H')^\Delta)_\delta\cong{}A_{r-1}$, and so $(H')_\delta$ is either simple or isomorphic to $A_4$, in which case it has a unique nontrivial proper normal subgroup $V\cong{}C_2^2$. As $M$ is isomorphic to a quotient of $(H')_\delta$, this means that either $M\cong{}A_{r-1}$, or $r=5$ and $M\cong{}C_3$, and hence that $G$ is isomorphic to either $A_{r-1}\times{}G^\Delta$, or $C_3\times{}A_r$. Thus Part $(2.b)$ holds.

Suppose case $(4.)$ of Lemma \ref{trilema} holds. Then $M_\sigma\cong{}(H')_\delta$. As $(H')_\delta\cong{}((H^\Delta)')_\delta$ and $((H^\Delta)')_\delta\cong{}A_{r-1}$, we have that $M_\sigma$ is isomorphic to $A_{r-1}$, and so is monolithic. Hence by Lemma \ref{tricor}, $A_{r-1}\lesssim{}T$. In particular, $T$ is nonabelian and simple, so $M\cap{}H=1$ and $[M,H]=1$, and we have that $MH\cong{}M\times{}H\cong{}M\times{}H^\Delta=M\times{}G^\Delta$. Hence Part $(2.c)$ holds, as required.\end{claimproof}\\

\noindent\underline{Claim 4:}
If $H^\Delta=A_\Delta$ and $G^\Delta=S_\Delta$, then Part $(3.)$ of the theorem holds.

\begin{claimproof}
As $G^\Delta/H^\Delta\cong{}C_2$, we must have that $d=1$ if $M$ is abelian, and $d\in\{1,2\}$ if $M$ is nonabelian. As above, we apply Lemma \ref{trilema} to $G^\Delta$. Note that case $(2.)$ cannot occur, as $G^\Delta$ has no elementary abelian normal subgroups.

Suppose case $(1.)$ of Lemma \ref{trilema} holds. Then $M\lesssim{\mathbb{M}(H/M)}=\mathbb{M}(H^\Delta)$, and since $G^\Delta$ has no elementary abelian normal subgroups, case $(1.)$ of Lemma \ref{trilema} holds. Here, $M$ is the unique minimal normal subgroup of $G$. Now $H'=H\cong{}\nonsplit{C_p}{A_\Delta}$ is transitive, and $M\cong{}C_p$ is semiregular. Since $H/M\cong{}H^\Delta\cong{}A_\Delta$ is simple, $M$ is a maximal normal subgroup of $H$. Moreover, $M$ is the unique such subgroup. Any proper normal subgroup of $H$ is contained in $M$, and hence semiregular. Hence every normal subgroup of $H$ is transitive or semiregular, and so $H$ is a semiprimitive group with minimal normal antiplinth $M$, and $H^\Delta=A_\Delta$. As $M\leq{}Z(H)\cap{}H'$, $H$ satisfies all the conditions of Part $(2.a)$ of this theorem, which has already been established in Claim $3$. Hence we may conclude that $H\cong{}\nonsplit{C_3}{A_6}$, and hence that $G\cong{}(\nonsplit{C_3}{A_6})\rtimes{C_2}$.

Suppose case $(3.)$ of Lemma \ref{trilema} holds. Then $M\cong{}(H')_\delta/(H')_\omega$, and in particular that $(H')_\delta\ne{}1$. Hence $(H')_\delta\cong{}A_{r-1}$, is either simple or isomorphic to $A_4$, and by extension $M$ is isomorphic to either $A_{r-1}$, or $C_3$. Hence $G$ is isomorphic to either $(A_{r-1}\times{}A_r)\rtimes{}C_2$, or $(C_3\times{}A_5)\rtimes{}C_2$.

Suppose case $(4.)$ of Lemma \ref{trilema} holds. Then $M_\sigma\cong{}(H')_\delta$. Since $(H')_\delta\cong{}((H^\Delta)')_\delta$ and $((H^\Delta)')_\delta\cong{}A_{r-1}$, we have that $M_\sigma$ is isomorphic to $A_{r-1}$, which is monolithic. Hence $G\cong{}(M\times{}H)\rtimes{}C_2$ is isomorphic to either $(T\times{}A_r)\rtimes{}C_2$, or $(T^2\times{}A_r)\rtimes{}C_2$, and by Lemma \ref{tricor}, $T\gtrsim{}A_{r-1}$.
\end{claimproof}

Now, since $G^\Delta=A_\Delta$ or $S_\Delta$, and $H^\Delta = 1, A_\Delta$, or $S_\Delta$ the theorem follows from Claims $2$, $3$, and $4$. \end{proof} 
The outcomes of Theorem \ref{ansnclassification} can be partitioned into $6$ infinite families and $7$ exceptional groups. We detail these in the tables below:

\begin{center}
\begin{table}
\caption{Infinite Families}
    \begin{tabular}{ | c | c | c | l | c | c |}
    \hline
    $M$                         & $G^\Delta$                & $H^\Delta$        &  Notes                &Innately Transitive                &Case in Theorem \ref{ansnclassification}\\\hline\hline  
    $C_p^d$                     & $A_\Delta$ or $S_\Delta$  & $1$               &  $d\geq{}r-2$         & No                &$1.(a)$\\\hline
    $T^d$                       & $A_\Delta$ or $S_\Delta$  & $1$               &  $d\geq{}r$           & No                &$1.(b)$\\\hline 
    $A_{r-1}$                   & $A_\Delta$ or $S_\Delta$  & $G^\Delta$        &                       & Yes                &$2.(b)$\\\hline
    $A_{r-1}$                   & $S_\Delta$                & $A_\Delta$        &                       & Yes               &$3.(b)$\\\hline
    $T$                         & $A_\Delta$ or $S_\Delta$  & $G^\Delta$        &  $A_{r-1}\lesssim{T}$ & No               &$3.(c)$\\\hline
    $T^2$                       & $S_\Delta$                & $A_\Delta$        &  $A_{r-1}\lesssim{T}$ & No               &$3.(c)$\\\hline
    \end{tabular}
 \end{table}
 \end{center}

\begin{center}
\begin{table}
\label{table2}
\caption{Exceptional Groups}
    \begin{tabular}{ | c | c | c | l | c | c |}
    \hline
    $M$                         & $G^\Delta$                & $H^\Delta$        &  Notes                  & Innately Transitive &Case in Theorem \ref{ansnclassification}\\ \hline\hline
    $C_2^4$                     & $A_7$                     & $1$               &  $d=r-3$                & No          &$1.(a)$\\ \hline
    $C_2^4$                     & $A_8$                     & $1$               &  $d=r-4$, split               & No          &$1.(a)$\\ \hline
    $C_2^4$                     & $A_8$                     & $1$               &  $d=r-4$, nonsplit                & No          &$1.(a)$\\ \hline
    $C_3$                       & $A_6$                     & $A_6$             &  $H$ transitive         & No          &$2.(a)$\\ \hline
    $C_3$                       & $S_6$                     & $A_6$             &  $H$ transitive         & No          &$3.(a)$\\ \hline
    $C_3$                       & $A_5$                     & $A_5$             &  $H$ transitive         & Yes         &$2.(b)$\\ \hline
    $C_3$                       & $S_5$                     & $A_5$             &  $H$ transitive         & Yes         &$3.(b)$\\ \hline
    \end{tabular} 
    \end{table}
\end{center}


\section{Examples}

We will explore the structure of the groups that appear as low-degree exceptions to our results from the previous section. Namely, we construct all groups satisfying the conditions in Table~\ref{table2}, above.

\begin{example}
\label{ex1}
If $G\cong{}C_2^4.A_7$ or $C_2^4.A_8$, then isomorphism types of $G$ are classified by the cohomology groups $H^2(G^\Delta,C_2^4)$, relative to the exceptional irreducible actions of $A_7$ and $A_8$ on $C_2^4$. The isomorphism classes of $H^2(A_7,C_2^4)$ and $H^2(A_8,C_2^4)$ are $1$ and $C_2$, respectively.

If $G\cong{}C_2^4.A_7$, then since $H^2(A_7, C_2^4)\cong{}1$, there exists only one extension, $C_2^4\rtimes{}A_7$. This group contains a subgroup isomorphic to $A_6$, and hence has an action on $2^4\cdot{}7=112$ points, which is readily seen to be semiprimitive.

If $G\cong{}C_2^4.A_8$, then since $H^2(A_8,C_2^4)\cong{}C_2$, there exist two distinct extensions, one of the form $\text{AGL}_4(2)=C_2^4\rtimes{}\text{GL}_4(2)$, and another of the form $\nonsplit{C_2^4}{A_8}$. The second group does not split over $C_2^4$, but does induce $\text{GL}_4(2)$ when it acts on $C_2^4$ by conjugation. On inspection, both the split and nonsplit extensions contain a subgroup isomorphic to $A_7$, and hence each have an action on $2^4\cdot{}8=128$ points, which is semiprimitive.
\end{example}

\begin{center}
\begin{table}
\caption{Semiprimitive actions from low degree linear representations of $A_7$ and $A_8$}
    \begin{tabular}{ | c | c | c | c | c | c | c |}
    \hline
    $G$                                     &  $|G|$                         &  $\text{deg}(G)$       & $b(G)$   & $m(G)$\\\hline\hline  
    $C_2^4\rtimes{}A_7$                     & $2^4\cdot{}7!/2$               &  $2^4\cdot{}7$           &  5        & $100$      \\\hline 
    $C_2^4\rtimes{}A_8\cong\AGL_4(2)$                     & $2^4\cdot{}8!/2$              &  $2^4\cdot{}8$           &   6       & $112$       \\\hline
    $\nonsplit{C_2^4}{A_8}$                 & $2^4\cdot{}8!/2$              &  $2^4\cdot{}8$           &    6      & $112$       \\\hline
    \end{tabular}\\\vspace{2mm}
\end{table}
\end{center}

 \begin{example}
\label{ex2}
If $G=\nonsplit{C_3}{A_6}$, then by the correspondence theorem, $G$ contains a subgroup $H$ for which $Z(G)\leq{}H$ and $H/Z(G)\cong{}A_{5}$. Note that $\mathbb{M}(A_5)=C_2$, and so $H$ must split over $Z(G)\cong{}C_3$. In particular, $G$ contains a subgroup isomorphic to $A_5$, and so has a transitive action on $18$ points, which is semiprimitive.

If $G\cong(\nonsplit{C_3}{A_6})\rtimes{}C_2$, then as $\mathbb{M}(S_6)=C_2$, the minimal normal $C_3$ subgroup must not be central. Hence the involution $C_2$ acts by inversion on $Z(\nonsplit{C_3}{A_6})$. Now, $G$ acts on the $18$ cosets of a subgroup isomorphic to $S_5$, which trivially intersects $Z(G)$. This action is faithful and semiprimitive.
\end{example}

\begin{center}
\begin{table}
\caption{ Semiprimitive actions of groups related to $A_6$ and $S_6$}
    \begin{tabular}{ | c | c | c | c | c | c | c |}
    \hline
    $G$                                         &  $|G|$                        &  $\text{deg}(G)$       & $b(G)$   & $m(G)$\\\hline\hline  
    $\nonsplit{C_3}{A_6}$                       & $3\cdot{}6!/2$                 &  $3\cdot{}6$              & $4$      & $12$      \\\hline 
    $(\nonsplit{C_3}{A_6})\rtimes{}C_2$         & $3\cdot{}6!$                   &  $3\cdot{}6$              & $5$      & $12$     \\\hline
    \end{tabular}\\\vspace{2mm} 
\end{table}
\end{center}

\medskip

\begin{example}\label{ex3}
If $G\cong{}C_3\times{}A_5$, we may consider the action of $G$ on the cosets of $H=\{(\phi(g),g)\mid{}g\in{}A_4\}\leq{}G$, where $\phi:A_4\to{}C_3$ is a homomorphism with kernel $C_2^2$. The action of $G$ on the $15$ cosets of $H$ is semiprimitive. In fact, the subgroup isomorphic to $A_5$ isa  transitive  minimal normal subgroup of $G$, and so $G$ is innately transitive.  Similarly, we may form the group $G\rtimes{}\langle{}\tau\rangle$, where $\tau$ acts on $C_3$ by inversion and on $A_5$ as conjugation by $(1,2)$, and let $G\rtimes{}\langle{}\tau\rangle$ act on the cosets of $H\rtimes{}\langle{}\tau\rangle$. This action is faithful and innately transitive. Viewed another way, these groups are the general and semilinear groups of degree $2$ over $\mathbb{F}_4$, in their natural action on the set of nonzero vectors of $V=\mathbb{F}_4^2$. The base size and minimum degree of these permutation groups can be calculated using {\sc Magma} \cite{MAGMA}, for example.
\end{example}

\begin{center}
\begin{table}
\caption{Semiprimitive groups constructed from $\mathrm{GL}(2,4)$}
    \begin{tabular}{ | c | c | c | c | c | c | c |}
    \hline
    $G$                                     &  $|G|$                         &  $\text{deg}(G)$       & $b(G)$   & $m(G)$\\\hline\hline  
    $C_3\times{}A_5=\GL_2(4)$                        & $3\cdot{}5!/2=(4^2-1)(4^2-4)$                &  $3\cdot{}5=4^2-1$                & $2$        & $12$       \\\hline 
    $(C_3\times{}A_5)\rtimes{}C_2=\GaL_2(4)$          & $3\cdot{}5!=2(4^2-1)(4^2-4)$               &  $3\cdot{}5=4^2-1$                &  $3$        & $12$       \\\hline
    \end{tabular}\\\vspace{2mm}
\end{table}
\end{center}

\section{Bounds}

We will now prove Theorem \ref{bounds}. It will follow directly from Theorems \ref{othm}, \ref{bthm}, \ref{mthm}, \ref{fthm}, and \ref{lthm}, and Corollary \ref{indcor} which establish each establish one part of Theorem \ref{bounds}.

\subsection{Order}
Our first numerical bound is for the order of a semiprimitive group, as a function of its degree.

\begin{theorem}\label{othm}
Let $G\leq{}S_\Omega$ be a semiprimitive permutation group of degree $n$. Then either $|G|<4^n$, or $G\geq{}A_\Omega$.
\end{theorem}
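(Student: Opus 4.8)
The plan is to induct on the degree $n$ and reduce to the quasiprimitive or innately transitive case covered by Theorem \ref{QPD}(1). First I would dispose of the base and trivial cases: if $G$ is quasiprimitive (in particular if $G$ is innately transitive, or if $G$ has no nontrivial intransitive normal subgroup), then Theorem \ref{QPD}(1) gives $|G|<4^n$ or $G\geq A_\Omega$ directly, so I may assume $G$ is not quasiprimitive. Then $G$ has an antiplinth $M$; choose $M$ to be a \emph{minimal} normal antiplinth so that the classification machinery of Section \ref{S3} is available. Let $\Delta$ be the set of $M$-orbits, $r:=|\Delta|$ and $m:=|M|$, so that $n=mr$ and, by the paragraph following the notation, $G/M\cong G^\Delta$ acts faithfully and quasiprimitively on $\Delta$ (Lemmas \ref{SPPS} and \ref{SMAQP}). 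The order factorises as $|G|=|M|\cdot|G^\Delta|=m\cdot|G^\Delta|$.

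The heart of the argument is to bound each factor. For $G^\Delta$ acting quasiprimitively on $r$ points I would apply Theorem \ref{QPD}(1): either $|G^\Delta|<4^r$, or $G^\Delta\geq A_\Delta$. In the first case, provided $m<4^{m(r-1)}$ (which holds comfortably for $m\geq 2$, $r\geq 2$), I get $|G|=m\cdot|G^\Delta|<m\cdot 4^r\leq 4^{mr}=4^n$, so I would just verify the small exceptional degrees $(m,r)$ where this crude estimate could fail, together with the degenerate case $m=1$ (which cannot occur since $M$ is a nontrivial antiplinth). The genuinely delicate case is $G^\Delta\geq A_\Delta$, because then $|G^\Delta|$ is as large as $r!/2$ or $r!$ and the naive bound $m\cdot r!$ need not be below $4^n=4^{mr}$ unless $m$ is large relative to $r$. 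This is precisely where I expect the main obstacle to lie, and where Theorem \ref{ansnclassification} and the numerical Lemma \ref{indlem} are needed.

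So in the remaining case $G^\Delta\geq A_\Delta$ with $r\geq 5$, I would invoke the classification Theorem \ref{ansnclassification} to pin down the structure of $M$ and hence control $m=|M|$. In each family of that theorem either $d$ (the number of simple factors of $M$) is large, forcing $m=|T|^d$ to be large enough that $m>r$ and Lemma \ref{indlem} yields $mr!<4^{mr}=4^n$; or $M$ is bounded and we land in one of the finitely many exceptional groups of Table \ref{table2}, which I would check individually by direct computation of $|G|$ against $4^n$. Concretely, when $M$ is abelian with $d\geq r-2$ we have $m=p^d\geq 2^{r-2}$, and when $M$ is nonabelian with $d\geq r$ we have $m\geq 60^r$, both of which make $m$ large relative to $r$; one then applies Lemma \ref{indlem} (after checking $m>r$) to conclude $|G|\leq m\cdot r!<4^{mr}=4^n$. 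The small degrees $r\leq 4$, where $A_\Delta$ is not simple, and the explicit exceptional groups $3.A_6$, $(3.A_6)\rtimes C_2$, $C_2^4{\cdot}A_7$, $C_2^4{\cdot}A_8$, $\GL_2(4)$, $\GaL_2(4)$ from the tables, would be handled by a short finite check comparing $|G|$ with $4^n$, noting that the only way to reach $|G|\geq 4^n$ is $G\geq A_\Omega$ itself. Assembling these cases completes the dichotomy $|G|<4^n$ or $G\geq A_\Omega$.
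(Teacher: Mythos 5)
Your treatment of the main case matches the paper's: quotient by the antiplinth $M$, apply Theorem \ref{QPD}(1) to the quasiprimitive group $G^\Delta$, and when $G^\Delta\geq A_\Delta$ invoke the classification of Section \ref{S3} together with Lemma \ref{indlem} and a finite check of the exceptional groups. The genuine gap is in your reduction step: you ``choose $M$ to be a minimal normal antiplinth'', but such a subgroup need not exist. An antiplinth is maximal among intransitive normal subgroups of $G$, and it may properly contain a smaller nontrivial normal subgroup of $G$ (for instance when the antiplinth is cyclic of order $4$, or more generally is not a minimal normal subgroup); in that situation no antiplinth is minimal normal, and the classification machinery, which is stated for a \emph{minimal normal} subgroup $M$, does not apply directly. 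You announce an induction on $n$ at the outset but never actually use it, so this case is simply not covered by your argument.

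The paper closes exactly this gap with a quotient step. It first proves the bound for semiprimitive groups possessing a minimal normal antiplinth (essentially your argument; note that in this non-quasiprimitive situation the conclusion obtained is always $|G|<4^n$, never $G\geq A_\Omega$). If the antiplinth $M$ is not minimal normal, it takes $N<M$ nontrivial and maximal subject to being normal in $G$, so that $\overline{G}:=G/N$ acts faithfully and semiprimitively on $\overline{\Omega}:=\Omega/N$ with $\overline{M}:=M/N$ a minimal normal antiplinth of $\overline{G}$; the first part then gives $|\overline{G}|<4^{n/|N|}$, whence $|G|=|N|\cdot|\overline{G}|<|N|\cdot 4^{n/|N|}<4^n$. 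If you replace your ``choose $M$ minimal normal'' by this quotient step (or genuinely carry out the induction on $n$ you advertise, taking care to handle the alternative $G/N\geq A_{\Omega/N}$), the remainder of your argument goes through as in the paper.
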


\begin{proof}
If $G$ is quasiprimitive, then the result follows from Theorem \ref{QPD}, part $(1)$. Hence we may assume that $G$ is not quasiprimitive, and so has a nontrivial antiplinth $M$. Suppose $M$ is minimal normal in $G$. Then we may adopt the notation of Section \ref{spn}. Now, $\Delta$ is the set of $M$-orbits, and as $M$ is an antiplinth, we have by Lemma \ref{SMAQP} that $G^\Delta$ is quasiprimitive. If $G^\Delta$ does not contain $A_\Delta$, then by part $(1)$ of Theorem \ref{QPD}, we have $|G^\Delta|<4^{|\Delta|}=4^r$. Hence $|G|=|M||G/M|=|M||G^\Delta|<m4^r$, which is less than or equal to $4^{mr}=4^n$, as required. Hence we may assume that $G^\Delta\geq{}A_\Delta$, and so by Corollary \ref{classcor} that at least one of the following holds:\begin{enumerate}
    \item $r\leq{}4$,
    \item $d\geq{}r-2$,
    \item $T\gtrsim{}A_{r-1}$,
    \item $G$ is isomorphic to one of $C_2^4.A_7$, $C_2^4.A_8$, $\nonsplit{C_3}{A_6}$, $(\nonsplit{C_3}{A_6})\rtimes{}C_2$, $C_3\times{}A_5$,\text{ or } \\ ${(C_3\times{}A_5)\rtimes{}C_2}$.
\end{enumerate}

If $r\leq{}8$, then $|G^\Delta|\leq{}r!<4^r$, as required. Hence we may assume that $r\geq{}9$, and in particular that we are in case $(2.)$ or $(3.)$ of Corollary \ref{classcor}. If we are in case $(2.)$, then $d\geq{}r-2$ and so $m\geq{}2^{r-2}$. If we are in case $(3.)$, then $T\gtrsim{}A_{r-1}$ and so $m\geq{}(r-1)!/2$. In either case, $m>r$, and so by Lemma \ref{indlem}, we have $mr!<4^{mr}$. As $|G|=|M||G/M|\leq{}mr!$ and $mr=n$, we have that $|G|<4^n$, as required. 

Hence we may assume that $M$ in not minimal normal in $G$. This implies that there exists a nontrivial normal subgroup $N\triangleleft{}G$ of $G$ which is properly contained in $M$. Let $N$ be a maximal such subgroup of $M$. Define $\overline{\Omega}:=\Omega/N$ to be the set of orbits of $N$, and $\overline{G}:=G/N$. By Lemma \ref{SPPS}, $G$ acts semiprimitively on $\overline{\Omega}$ with kernel $N$. Hence, $G^{\overline{\Omega}}\cong{}G/N=\overline{G}$. As $N$ is maximal in $M$ with respect to being normal in $G$, $\overline{M}\cong{}M/N$ is a minimal normal subgroup of $\overline{G}\cong{}G/N$. As $M$ is an antiplinth of $G$, $\overline{M}$ is an antiplinth of $\overline{G}$. Hence $\overline{G}$ is a semiprimitive group with a minimal normal antiplinth. From the first part of the proof, we have $|\overline{G}|<4^{|\overline{\Omega}|}=4^{n/|N|}$. Now, $|G|=|N||\overline{G}|<|N|4^{n/|N|}$, which is less than $4^n$, establishing the theorem.
\end{proof}

\subsection{Base Size}

Recall from Definition \ref{defb} that the \textit{base size} $b(G)$ of a permutation group $G$ is the smallest size of a base for $G$. In order to prove Theorem \ref{bthm}, we first need a lemma on the base sizes of certain quotient actions. This lemma may be viewed as a generalisation of \cite{BQG}*{Lemma 5.1}, which establishes the claim when $G$ is a quasiprimitive group (and hence $G_{(\Delta)}$ is trivial).

\begin{lemma}\label{bpb}
Let $G\leq{}S_\Omega$ be a permutation group, and let $\Delta$ be a nontrivial $G$-invariant partition of $\Omega$ with the property that $G_{(\Delta)}$ is semiregular. Then every base $\mathcal{B}$ for $G^\Delta$ on $\Delta$ defines a base $B$ for $G$ on $\Omega$, constructed by taking a set of representatives of the blocks in $\mathcal{B}$. In particular, $$b(G)\leq{}b(G^{\Delta}).$$
\end{lemma}
\begin{proof}
Let $G$, $\Omega$, $\Delta$, $\mathcal{B}$, and $B$ be as above, and let $N=G_{(\Delta)}$ be the kernel of the action of $G$ on $\Delta$. Note that if $\delta\in\Delta$ and $b\in\delta$, then $G_b\leq{}G_{\{\delta\}}$. Thus $$\bigcap_{b\in{B}}G_{b}\leq{}\bigcap_{\delta\in{\mathcal{B}}}G_{\{\delta\}}\leq{}N.$$ However $N$ is semiregular, and hence $N\cap{}G_b$ is trivial for all $b\in{}B$. Thus $$\bigcap_{b\in{B}}G_{b}=1,$$ and so $B$ is a base for $G$. As $B$ has exactly one representative for each $\delta\in\mathcal{B}$, we have $$b(G)\leq{}|B|=|\mathcal{B}|=b(G^\Delta).$$
\end{proof}

\begin{theorem}\label{bthm}
There is a constant $n_1$ such that for every semiprimitive group $G\leq{}S_\Omega$ of degree $n\geq{}n_1$ either $b(G)<4\sqrt{n}\log_2n$, or $G\geq{}A_\Omega$.
\end{theorem}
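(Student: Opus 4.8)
The plan is to split the argument according to the size of the base of the quotient action on the block system, using two complementary tools: the quotient–base inequality of Lemma~\ref{bpb} and the elementary greedy bound $b(G)\leq\log_2|G|$, valid for any permutation group since each added base point at least halves the pointwise stabiliser. I first observe that the ``$G\geq A_\Omega$'' alternative is only needed in the quasiprimitive base case: for $n\geq 5$ neither $A_\Omega$ nor $S_\Omega$ has a nontrivial intransitive normal subgroup, so once $G$ has a nontrivial antiplinth we automatically have $A_\Omega\nleqslant G$ and must prove the bound outright. Accordingly, if $G$ is quasiprimitive I would invoke Theorem~\ref{QPD}(2) directly and be done. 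So assume $G$ is not quasiprimitive and fix a nontrivial antiplinth $M$; by Lemma~\ref{SMAQP} the quotient $G^\Delta=G/M$ is quasiprimitive on the set $\Delta$ of $M$-orbits, of size $r=|\Delta|$, with $M=G_{(\Delta)}$ semiregular, so that $n=|M|r\geq 2r$ and in particular $r<n$.

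Mirroring the proof of Theorem~\ref{othm}, I would first treat the case that $M$ is minimal normal, and then reduce the general case to it. For the reduction, if $M$ is not minimal normal I would pick $N\triangleleft G$ maximal subject to $1\neq N<M$, pass to $\bar G=G/N$ acting on $\bar\Omega=\Omega/N$ (semiprimitive by Lemma~\ref{SPPS}, with minimal normal antiplinth $M/N$ and degree $\bar n=n/|N|$), and apply Lemma~\ref{bpb} to the partition $\bar\Omega$, whose kernel $N$ is semiregular, to obtain $b(G)\leq b(\bar G)$. Since $\bar G$ still has a nontrivial antiplinth, the minimal-normal case gives $b(\bar G)\leq 4\sqrt{\bar n}\log_2\bar n\leq 4\sqrt n\log_2 n$, as the bounding function is increasing.

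For the minimal-normal case I would branch on whether $G^\Delta\geq A_\Delta$. If not, then $G^\Delta$ is quasiprimitive of degree $r$ with $A_\Delta\nleqslant G^\Delta$, so Theorem~\ref{QPD}(2) gives $b(G^\Delta)\leq 4\sqrt r\log r$ (valid once $r\geq n_0$; for bounded $r$ the greedy bound makes $b(G^\Delta)$ a constant), and Lemma~\ref{bpb} yields $b(G)\leq b(G^\Delta)\leq 4\sqrt r\log r<4\sqrt n\log_2 n$ since $r<n$. The interesting branch is $G^\Delta\geq A_\Delta$, where Lemma~\ref{bpb} is useless because $b(S_r)=r-1$ is far too large. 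Here I would invoke the classification Theorem~\ref{classcor} for $r\geq 5$ (bounded $r$ being immediate via the greedy bound and $|G^\Delta|\leq r!$). Enlarging $n_1$ past the degrees of the finitely many exceptional groups of outcome~(3) removes them, leaving outcome~(1), where $d\geq r-2$ forces $m=|M|=|T|^d\geq 2^{r-2}$ and hence $r\leq 2+\log_2 n$, or outcome~(2), where $T\gtrsim A_{r-1}$ forces $m\geq (r-1)!/2$ and hence $r!\leq 2n$. In either case $|G|=m\,|G^\Delta|\leq m\cdot r!$ with $\log_2 m\leq\log_2 n$, so the greedy bound gives
\[
b(G)\leq\log_2|G|\leq\log_2 m+\log_2 r!=O\big((\log_2 n)(\log_2\log_2 n)\big),
\]
which lies below $4\sqrt n\log_2 n$ once $n_1$ is large enough.

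The main obstacle is exactly the branch $G^\Delta\geq A_\Delta$: here the clean reduction $b(G)\leq b(G^\Delta)$ collapses, and the entire purpose of Theorem~\ref{classcor} is to guarantee that $M$ is so large—exponential or factorial in $r$—that $G$ itself is minuscule relative to $n$, letting the trivial logarithmic base bound win decisively. The remaining work is bookkeeping: choosing $n_1$ large enough to simultaneously exceed $n_0$ and the exceptional degrees, to absorb the $O((\log_2 n)(\log_2\log_2 n))$ term and any difference in logarithm base into the strict inequality, and to legitimise the small-$r$ and non-minimal-normal reductions.
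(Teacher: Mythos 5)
Your proposal is correct and follows the paper's skeleton almost exactly: quasiprimitive base case via Theorem~\ref{QPD}(2), reduction to a minimal normal antiplinth via a maximal $N<M$, the inequality $b(G)\leq b(G^\Delta)$ from Lemma~\ref{bpb}, and the classification of Theorem~\ref{classcor} when $G^\Delta\geq A_\Delta$. The one genuine divergence is in that last branch. You assert that Lemma~\ref{bpb} is ``useless'' there because $b(G^\Delta)$ can be as large as $r-1$; in fact the paper uses precisely Lemma~\ref{bpb} at that point, writing $b(G)\leq b(G^\Delta)\leq r-1$ and then observing that the classification forces $n=mr\geq r2^{r-2}$ or $n\geq r!/2$, so that $r-1$ is already far below $4\sqrt{n}\log_2 n$. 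Your substitute — the greedy bound $b(G)\leq\log_2|G|\leq\log_2 m+\log_2 r!=O\bigl((\log_2 n)(\log_2\log_2 n)\bigr)$ — exploits exactly the same consequence of the classification (that $m$ is exponential or factorial in $r$) and is equally valid; it is neither sharper nor weaker in any way that matters here, though the paper's version has the small advantage of also covering the exceptional groups of outcome~(3) uniformly (their degrees $15$, $18$, $112$, $128$ already satisfy $r-1<4\sqrt{n}\log_2 n$), whereas your plan to discard them by enlarging $n_1$ needs a word of care in the non-minimal-normal reduction, where $\overline{G}$ may have degree below $n_1$ and could in principle be one of those exceptional groups; your greedy bound still disposes of them, but the chain $b(\overline G)\leq 4\sqrt{\bar n}\log_2\bar n$ should then be justified directly rather than by citing a minimal-normal case that excluded them. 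Your opening observation that the $G\geq A_\Omega$ alternative can only occur in the quasiprimitive case is correct and a nice clarification not made explicit in the paper.
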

\begin{proof}
Let $n_0$ be as in Lemma \ref{BQG}, and define $n_1$ to be the least integer such that both $n_0\leq{}4\sqrt{n_1}\log_2(n_1)$ and $n_0\leq{}n_1$ hold. If $G$ is quasiprimitive, then since $n_0\leq{}n_1$, the result follows by Lemma \ref{BQG}. Hence we may suppose that $G$ is not quasiprimitive, and so has a nontrivial antiplinth $M$. 

Suppose $M$ is a minimal normal subgroup of $G$. We adopt the notation of Section \ref{spn}. Now $\Delta$ is the set of $M$-orbits. By Lemma \ref{bpb}, $b(G)\leq{}b(G^\Delta)$. We proceed by bounding $b(G^\Delta)$. As $M$ is an antiplinth, we have by Lemma \ref{SMAQP} that $G^\Delta$ is quasiprimitive. If $G^\Delta$ does not contain $A_\Delta$, then by Lemma \ref{BQG}, $b(G^\Delta)<4\sqrt{r}\log_2r$, and hence $b(G^\Delta)<4\sqrt{r}\log_2r<4\sqrt{n}\log_2n$, as required. Thus we may assume $G^\Delta\geq{}A_\Delta$. By Corollary \ref{classcor}, one of the following holds:\begin{enumerate}
    \item $r\leq{}4$,
    \item $d\geq{}r-2$,
    \item $T\gtrsim{}A_{r-1}$,
    \item $G$ is isomorphic to one of $C_2^4.A_7$, $C_2^4.A_8$, $\nonsplit{C_3}{A_6}$, $(\nonsplit{C_3}{A_6})\rtimes{}C_2$, $C_3\times{}A_5$,\text{ or } \\ {$(C_3\times{}A_5)\rtimes{}C_2$}. \end{enumerate} 

Note that, regardless of the isomorphism type of $G^\Delta$, we have that $b(G^\Delta)\leq{}r-1$. Suppose first that $(1.)$ holds. Then $b(G^\Delta)\leq{}3\leq{}4\sqrt{2}\log_2{2}<4\sqrt{n}\log_2n,$ establishing the theorem in this case. Now, suppose that $(2.)$ holds. Here $m\geq{}2^{r-2}$, and so $n=mr\geq{}r2^{r-2}$. Hence, $$b(G^\Delta)\leq{}r-1<4\sqrt{r2^{r-2}}\log_2{(r2^{r-2})}\leq{}4\sqrt{n}\log_2n.$$ If $(3.)$ holds, then $m\geq{}|A_{r-1}|=(r-1)!/2$, and so $n=mr\geq{}r!/2$. Hence, $$b(G^\Delta)\leq{}r-1<4\sqrt{\frac{r!}{2}}\log_2{\left(\frac{r!}{2}\right)}\leq{}4\sqrt{n}\log_2n.$$ Finally, suppose $(4.)$ holds, and $G$ is isomorphic to one of 
$C_3\times{}A_5$
 $(C_3\times{}A_5)\rtimes{}C_2$,
  $\nonsplit{C_3}{A_6}$,
 $(\nonsplit{C_3}{A_6})\rtimes{C_2}$,
  $C_2^4.A_7$,,
    or 
     $C_2^4.A_8$. Now, $G$ is detailed in Example \ref{ex1}, \ref{ex2}, or \ref{ex3}, and in particular, $r\in{}\{5,6,7,8\}$, and $n\in\{15, 18, 112, 128\}$. Hence $$b(G^\Delta)\leq{}r-1\leq{}7<4\sqrt{15}\log_215\leq{}4\sqrt{n}\log_2n.$$

Hence we may assume that $M$ is not minimal normal in $G$. This implies that there exists a nontrivial normal subgroup $N\triangleleft{}G$ of $G$ which is properly contained in $M$. Let $N$ be a maximal such subgroup of $M$. Define $\overline{\Omega}:=\Omega/N$ to be the set of orbits of $N$, and $\overline{G}:=G/N$. By Lemma \ref{SPPS}, $G$ acts semiprimitively on $\overline{\Omega}$ with kernel $N$. Hence, $G^{\overline{\Omega}}\cong{}G/N=\overline{G}$. As $N$ is maximal in $M$ with respect to being normal in $G$, $\overline{M}\cong{}M/N$ is a minimal normal subgroup of $\overline{G}\cong{}G/N$. As $M$ is an antiplinth of $G$, $\overline{M}$ is an antiplinth of $\overline{G}$. Hence $\overline{G}$ is a semiprimitive group with a minimal normal antiplinth. By the first part of the proof, we have that $b(\overline{G})<4\sqrt{\overline{\Omega}}\log_2|\overline{\Omega}|$, and hence that $$b(G)\leq{}b(\overline{G})<4\sqrt{|\overline{\Omega}|}\log_2|\overline{\Omega}|<4\sqrt{n}\log_2n.$$ The theorem follows.
\end{proof}

\begin{corollary}
There is a constant $n_1$ such that if $G\leq{}S_\Omega$ is a semiprimitive permutation group of degree $n\geq{}n_1$, then either $|G|< 2^{4\sqrt{n}(\log_2n)^2}$ or $G\geq{}A_\Omega$.
\end{corollary}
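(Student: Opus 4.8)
The plan is to combine the base-size bound from Theorem \ref{bthm} with the elementary inequality relating the order of a permutation group to its base size. First I would recall that for any finite permutation group $G\leq{}S_\Omega$ of degree $n$ admitting a base $B$ of size $b$, the order satisfies $|G|\leq{}n^b$: the map sending $g\in{}G$ to the tuple of images $(\beta g)_{\beta\in{}B}$ is injective, since any element fixing every point of $B$ lies in $G_{(B)}=1$, and each of the $b$ coordinates takes at most $n$ values. Applying this with $b=b(G)$ gives $|G|\leq{}n^{b(G)}$.

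Next I would take $n_1$ to be exactly the constant supplied by Theorem \ref{bthm} and invoke that theorem: for $n\geq{}n_1$, either $G\geq{}A_\Omega$, in which case the conclusion of the corollary is immediate, or else $b(G)<4\sqrt{n}\log_2 n$. In the latter case, substituting this bound into $|G|\leq{}n^{b(G)}$ and using that $n^x$ is increasing in $x$ yields
\[
|G|\leq{}n^{b(G)}<n^{4\sqrt{n}\log_2 n}.
\]

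Finally I would rewrite the right-hand side in base $2$ via $n=2^{\log_2 n}$, so that
\[
n^{4\sqrt{n}\log_2 n}=\bigl(2^{\log_2 n}\bigr)^{4\sqrt{n}\log_2 n}=2^{4\sqrt{n}(\log_2 n)^2},
\]
which establishes $|G|<2^{4\sqrt{n}(\log_2 n)^2}$ and completes the proof. There is essentially no obstacle here: the argument is a routine passage from a base-size bound to an order bound, and the only points requiring care are choosing $n_1$ to coincide with the constant from Theorem \ref{bthm} and carrying out the exponent arithmetic correctly.
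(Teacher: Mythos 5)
Your proposal is correct and follows exactly the paper's own argument: the injection $g\mapsto(\beta g)_{\beta\in B}$ for a base $B$ gives $|G|\leq n^{b(G)}$, and substituting the bound from Theorem \ref{bthm} yields $|G|<n^{4\sqrt{n}\log_2 n}=2^{4\sqrt{n}(\log_2 n)^2}$. If anything, you are slightly more careful than the paper in explicitly dispatching the $G\geq A_\Omega$ alternative and in pinning down the constant $n_1$.
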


\begin{proof}
If $\mathfrak{B}$ is a base for $G$ of size $b(G)$, then there exists an injection $G\to{}\Omega^{b(G)}$ given by choosing an ordering $\{b_i\}$ for $\mathfrak{B}$, and setting $g\mapsto{}(b_1g, ..., b_{m(G)}g)$. Hence $$|G|\leq{}|\Omega|^{b(G)}\leq{}n^{4\sqrt{n}(\log_2n)}=2^{4\sqrt{n}(\log_2n)^2}.$$
\end{proof}

\subsection{Minimal Degree}
Recall from Definition \ref{defm} the \textit{minimal degree} $m(G)$ of a permutation group $G$ is the least number of points moved by a nontrivial element of $G$.
In order to prove Theorem \ref{mthm}, we first need a lemma relating the minimal degree of a group to the minimal degree of its quotient actions. This lemma can be considered a generalisation of \cite{BQG}*{Lemma $7.1$}, which establishes the claim for quasiprimitive groups.

\begin{lemma}\label{medlem}\label{medlem2}
Let $G\leq{}S_\Omega$ be a nontrivial semiprimitive permutation group, and $\Delta$ be block system for $G$, with blocks of size $s$. Then $m(G)\geq{}s.m(G^\Delta).$
\end{lemma}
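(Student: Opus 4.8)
The plan is to prove the sharper pointwise statement that $m(g) \ge s\, m(G^\Delta)$ for \emph{every} nontrivial $g \in G$; minimizing over nontrivial $g$ then gives $m(G) \ge s\, m(G^\Delta)$ directly from the definition of minimal degree. I may assume $\Delta$ is a nontrivial block system, since for the partition into singletons $G^\Delta = G$ and the inequality is an equality, while for the one-block partition $G^\Delta$ is trivial and the statement is vacuous. Because $G$ is transitive (semiprimitive groups are transitive by definition) and $\Delta$ has more than one block, $G^\Delta$ is a nontrivial transitive group, so $m(G^\Delta)$ is well defined and satisfies the trivial bound $m(G^\Delta) \le |\Delta|$. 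The argument then splits according to whether $g$ lies in the kernel $N := G_{(\Delta)}$ of the action of $G$ on $\Delta$.

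First I would handle the generic case $g \notin N$, which is a purely combinatorial fact about blocks and uses no semiprimitivity. Here $\bar g := g^\Delta$ is a nontrivial element of $G^\Delta$, so it moves at least $m(G^\Delta)$ blocks. For each block $\delta$ with $\delta^{\bar g} \ne \delta$, every one of its $s$ points is sent into a different block and hence moved by $g$; since distinct moved blocks are pairwise disjoint, $g$ moves at least $s\, m(G^\Delta)$ points, giving $m(g) \ge s\, m(G^\Delta)$.

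The case $g \in N$ is the crux, and it is exactly here that semiprimitivity is indispensable: now $\bar g = 1$ moves no blocks, so the counting above yields nothing. The key point is that $N$, being the kernel of the action on $\Delta$, fixes every block setwise; since there is more than one block, $N$ is an intransitive normal subgroup of $G$. Semiprimitivity then forces $N$ to be semiregular, so the nontrivial element $g \in N$ fixes no point of $\Omega$ whatsoever and therefore moves all $n = s|\Delta|$ points. Combined with $m(G^\Delta) \le |\Delta|$ this gives $m(g) = n = s|\Delta| \ge s\, m(G^\Delta)$, completing this case.

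The only genuine obstacle is the kernel case: without semiprimitivity an element fixing every block setwise could fix many points and wreck the bound, so the whole lemma hinges on invoking the semiregularity of $N$ there. Everything else is elementary block bookkeeping, and the boundary conventions noted above require only a cursory check.
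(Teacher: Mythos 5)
Your proof is correct and follows essentially the same route as the paper's: split on whether $g$ lies in the kernel $N=G_{(\Delta)}$, use semiprimitivity to get semiregularity of $N$ (hence $g$ moves all $s|\Delta|\geq s\,m(G^\Delta)$ points) in the kernel case, and count $s$ moved points per moved block otherwise. Your treatment is slightly more careful than the paper's about the degenerate block systems and about why $N$ is intransitive, but the substance is identical.
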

\begin{proof}
Let $M:=G_{(\Delta)}$ be the kernel of the action of $G$ on $\Delta$. Note that since $G$ is semiprimitive, $M$ is semiregular. Let $1\ne{}g\in{}G$. If $g\in{}M$, then since $M$ is semiregular, $g$ moves every point in $\Omega$. If $g\in{}G\backslash{}M$, then $g^\Delta$ is a nontrivial element of $G^\Delta$, and hence moves at least $m(G^\Delta)$ of the blocks in $\Delta$. As each block has $s$ elements, the lemma follows.
\end{proof}

Note that if we adopt the convention that $m(G)=0$ if $G$ is trivial, then we can remove the word `nontrivial' from the statement of Lemma \ref{medlem}.

\begin{theorem}\label{mthm}
Let $G$ be a semiprimitive permutation group of degree $n$. Then either $m(G)\geq(\sqrt{n}-1)/2$, or $G\geq{}A_\Omega$.
\end{theorem}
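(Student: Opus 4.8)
The plan is to follow the template already used for Theorems \ref{othm} and \ref{bthm}: dispose of the quasiprimitive case, treat the case of a minimal normal antiplinth as the base case, and then reduce the general case to that base case. If $G$ is quasiprimitive it is innately transitive, and Theorem \ref{QPD}(3) gives the conclusion at once. So I assume $G$ is not quasiprimitive; then $G$ has a nontrivial antiplinth $M$. Since any group containing $A_\Omega$ is primitive and hence quasiprimitive (and so has no nontrivial intransitive normal subgroup), the alternative $G\geq A_\Omega$ cannot occur, and my task reduces to proving the bound $m(G)\geq(\sqrt{n}-1)/2$ outright. The two engines throughout are Lemma \ref{medlem}, which gives $m(G)\geq s\cdot m(G^\Delta)$ for a block system $\Delta$ with blocks of size $s$, and the numerical Lemma \ref{numlem2}.

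For the base case, suppose $M$ is minimal normal and adopt the notation of Section \ref{spn}, so $\Delta$ is the set of $M$-orbits, $m=|M|$, $r=|\Delta|$, and $n=mr$. As $M$ is intransitive and normal in a semiprimitive group it is semiregular, so its orbits have size $m$, and Lemma \ref{medlem} gives $m(G)\geq m\cdot m(G^\Delta)$, where $G^\Delta$ is quasiprimitive of degree $r$ by Lemma \ref{SMAQP}. If $G^\Delta\not\geq A_\Delta$, then Theorem \ref{QPD}(3) supplies $m(G^\Delta)\geq(\sqrt{r}-1)/2$, whence $m(G)\geq m(\sqrt{r}-1)/2$; feeding $a=m$, $b=r$ into Lemma \ref{numlem2} turns $m(\sqrt{r}-1)\geq\sqrt{mr}-1$ into $m(G)\geq(\sqrt{n}-1)/2$. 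The only excluded case $b=2$ cannot occur, since $r=2$ forces $A_\Delta=1\leq G^\Delta$, contradicting $G^\Delta\not\geq A_\Delta$.

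The main obstacle is the complementary subcase $G^\Delta\geq A_\Delta$, where the quasiprimitive bound degenerates: $m(S_r)=2$ and $m(A_r)=3$ are tiny, so I can only conclude $m(G)\geq 2m$ and must instead use the classification to force $m=|M|$ to be large. The clean criterion to aim for is $16m\geq r$, which immediately yields $4m\geq\sqrt{mr}$ and hence $2m\geq(\sqrt{n}-1)/2$. For $r\leq 4$ this is trivial. For $r\geq 5$, Corollary \ref{classcor} leaves three possibilities: if $d\geq r-2$ then $m\geq 2^{r-2}$ and $16m\geq 2^{r+2}\geq r$; if $T\gtrsim A_{r-1}$ then $m\geq(r-1)!/2$ and $16m\geq 8(r-1)!\geq r$; and the remaining cases are exactly the seven exceptional groups of Examples \ref{ex1}--\ref{ex3}, for which the tabulated values of $m(G)$ and $n$ are checked directly against $(\sqrt{n}-1)/2$. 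I expect this alternating/symmetric subcase to be where the real work lies, precisely because the generic minimal-degree bound fails and one is forced back onto Corollary \ref{classcor}.

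Finally, if $M$ is not minimal normal, I reduce exactly as in Theorems \ref{othm} and \ref{bthm}: let $N$ be maximal among the nontrivial normal subgroups of $G$ properly contained in $M$, and pass to $\overline{G}=G/N$ on $\overline{\Omega}=\Omega/N$. By Lemma \ref{SPPS}, $\overline{G}$ is semiprimitive with minimal normal antiplinth $\overline{M}=M/N$; since $\overline{M}$ is nontrivial, semiregular and intransitive, $|\overline{\Omega}|\geq 2|\overline{M}|\geq 4$. Lemma \ref{medlem} gives $m(G)\geq|N|\cdot m(\overline{G})$, the base case gives $m(\overline{G})\geq(\sqrt{|\overline{\Omega}|}-1)/2$, and Lemma \ref{numlem2} with $a=|N|\geq 2$ and $b=|\overline{\Omega}|\geq 4$ (so its exceptional $b=2$ does not arise) combines these into $m(G)\geq(\sqrt{n}-1)/2$, completing the argument.
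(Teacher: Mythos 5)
Your proposal is correct and follows essentially the same route as the paper's proof: dispose of the quasiprimitive case via Theorem \ref{QPD}(3), handle a minimal normal antiplinth by combining Lemma \ref{medlem} with Lemma \ref{numlem2} when $G^\Delta\ngeq A_\Delta$ and with Corollary \ref{classcor} when $G^\Delta\geq A_\Delta$, then reduce the general case to the minimal normal one. Your uniform criterion $16m\geq r$ is a slightly tidier packaging of the paper's case-by-case estimates (the paper uses $m>r$ for the generic cases and checks the $(C_3,5)$ and $(C_3,6)$ exceptions directly), and your explicit verification that the exceptional case $b=2$ of Lemma \ref{numlem2} never arises is a small point the paper leaves implicit.
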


\begin{proof}
If $G$ is quasiprimitive, then the result follows from Theorem \ref{BQG}, part $(3)$. Hence we may assume that $G$ is not quasiprimitive, and so $G$ has a nontrivial antiplinth $M$.

Suppose $M$ is minimal normal in $G$. We adopt the notation of Section \ref{spn}. As $M$ is an antiplinth, we have by Lemma \ref{SMAQP} that $G^\Delta$ is quasiprimitive. If $G^\Delta$ does not contain $A_\Delta$, then by Lemma \ref{BQG}, $m(G^\Delta)\geq{}(\sqrt{r}-1)/2$. By Lemma \ref{medlem2}, $m(G)\geq{}|M|m(G^\Delta)$. Hence, $m(G)\geq{}m(\sqrt{r}-1)/2,$ which by Lemma \ref{numlem2} is greater than $(\sqrt{mr}-1)/2=(\sqrt{n}-1)/2$, as required. Hence we may assume that $G^\Delta\geq{}A_\Delta$. By Corollary \ref{classcor}, one of the following holds: \begin{enumerate}
    \item $r\leq{}4$,
    \item $d\geq{}r-2$,
    \item $T\gtrsim{}A_{r-1}$ 
    \item $G$ is isomorphic to one of $C_2^4.A_7,  C_2^4.A_8, \nonsplit{C_3}{A_6}, (\nonsplit{C_3}{A_6})\rtimes{}C_2, C_3\times{}A_5,\text{ or } \\ {(C_3\times{}A_5)\rtimes{}C_2}.$ 
\end{enumerate}

Suppose first that $r\leq{}4$. By Lemma \ref{medlem2}, $m(G)\geq{}|M|m(G^\Delta)\geq{}2m$, and since $m\geq{}2$ we have $2m\geq{}(\sqrt{4m}-1)/2\geq{}(\sqrt{rm}-1)/2=(\sqrt{n}-1)/2$.

Now suppose either $d\geq{}r-2$ or $T\gtrsim{A_{r-1}}$, or that $(M,r)$ is $(C_2^4, 7)$ or $(C_2^4, 8)$. Now, $m>r$, and so $m>\sqrt{rm}$, which implies $2m>2(\sqrt{rm}-1)\geq{}(\sqrt{n}-1)/2.$ If $(M,r)$ is $(C_3,5)$, then $2m=6>(\sqrt{15}-1)/2=(\sqrt{n}-1)/2$.

Finally, suppose that $(M,r)$ is $(C_3,6)$. By surveying all such groups in Magma \cite{MAGMA}, we find that either $G^\Delta=A_\Delta$ and $m(G)=9$, or $G^\Delta=S_\Delta$ and $m(G)=6$. In either case, $m(G)\geq{}6>(\sqrt{18}-1)/2\sim{}1.62$, and so the bound is satisfied.

Hence we may assume that $M$ is not minimal normal in $G$. Let $N$ be a normal subgroup of $G$ contained in $M$ such that $M/N$ is a minimal normal subgroup of $G/N$. Let $\overline{\Omega}$ be the set of $N$-orbits in $\Omega$. Now, by Lemma \ref{SPPS}, $G$ acts semiprimitively on $\overline{\Omega}$ with kernel $N$. Hence $G^{\overline{\Omega}}$ is a semiprimitive group with a minimal normal subgroup $M^{\overline{\Omega}}$. As $M$ is an antiplinth of $G$, $M^{\overline\Omega}$ is an antiplinth of $G^{\overline{\Omega}}$. Hence by the first part of the proof, $m(G^{\overline{\Omega}})\geq{}(\sqrt{n/|N|}-1)/2$. By Lemma \ref{medlem}, $$m(G)\geq{}|N|m(G^{\overline\Omega})\geq{}|N|(\sqrt{n/|N|}-1)/2,$$ which by Lemma \ref{numlem2} is greater than $(\sqrt{n}-1)/2$, as required.
\end{proof}

Classes of permutation groups $\mathbb{H}$ for which the minimal degree $m(H)$ of $H\in\mathbb{H}$ is not bounded in $\text{deg}(H)$ by a constant have been shown by \cite{KPS} to be \textit{indistinguishable}. The significance of distinguishability is that there exists an efficient quantum algorithm for solving the hidden subgroup problem in $S_n$ when the hidden subgroup is drawn from a distinguishable class of groups. The interested reader is directed toward \cite{KPS} for more information.

\begin{corollary}\label{indcor}
Semiprimitive groups are indistinguishable subgroups of $S_\Omega$. 
\end{corollary}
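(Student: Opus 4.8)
The plan is to derive the corollary directly from Theorem~\ref{mthm} together with the indistinguishability criterion recalled in the paragraph immediately preceding the corollary. That criterion, due to \cite{KPS}, asserts that a class $\mathbb{H}$ of permutation groups is indistinguishable whenever the minimal degree $m(H)$ of its members fails to be bounded by a constant in terms of $\deg(H)$. Thus it suffices to exhibit a lower bound on $m(G)$, uniform across the class and unbounded in the degree $n$; no quantum-algorithmic input is needed beyond this black-box reduction, which is the whole point of having proved the minimal-degree bound first.

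First I would invoke Theorem~\ref{mthm}, which furnishes the dichotomy that every semiprimitive group $G\leq S_\Omega$ of degree $n$ satisfies either $m(G)\geq(\sqrt{n}-1)/2$, or $G\geq A_\Omega$. The first alternative immediately supplies the required estimate: the function $n\mapsto(\sqrt{n}-1)/2$ is strictly increasing and tends to infinity, so it is not bounded above by any constant. Consequently, for the semiprimitive groups with $A_\Omega\nleqslant G$ --- which is precisely the class considered throughout this paper, cf.\ the hypothesis of Theorem~\ref{bounds} --- the minimal degree grows without bound as a function of the degree, and the criterion of \cite{KPS} applies verbatim to yield indistinguishability.

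The single point requiring care, and hence the main (indeed the only) obstacle, is the second alternative $G\geq A_\Omega$: the groups $A_\Omega$ and $S_\Omega$ in their natural action have minimal degrees $3$ and $2$ respectively, which are constant in $n$, so these do not satisfy the growth condition. I would dispose of them by observing that they constitute exactly the semiprimitive groups containing $A_\Omega$, of which there are only the two of each degree; they are trivially distinguishable (indeed identifiable) and form no indistinguishable subfamily, so their exclusion neither weakens the conclusion for the remaining groups nor affects the uniform bound. The essential bookkeeping is therefore simply to interpret the statement for the class of semiprimitive groups not containing $A_\Omega$, where the lower bound $(\sqrt{n}-1)/2$ genuinely applies to every member, making the invocation of \cite{KPS} legitimate.
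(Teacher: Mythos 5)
Your proposal matches the paper's proof, which simply cites Theorem~\ref{mthm} together with Theorem~C of \cite{KPS}; the reduction through the minimal-degree criterion is exactly the intended argument. Your extra remark about excluding $A_\Omega$ and $S_\Omega$ is sensible bookkeeping that the paper leaves implicit (these groups are excluded in the \cite{KPS} framework just as in Theorem~\ref{bounds}), but it does not change the route.
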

\begin{proof}
This follows directly from Theorem \ref{mthm} and \cite{KPS}*{Theorem C}.
\end{proof}

\subsection{Fixed Point Ratio}

Recall from Definition \ref{deffix} that the fixity $\text{fix}(G)$ of $G$ is the maximum number of points fixed by a nontrivial element of $G$, and the fixed point ratio $\text{fpr}(G)$ of $G$ is the ratio of the fixity and degree of $G$.

\begin{theorem}\label{fthm}
Let $G\leq{}S_\Omega$ be a semiprimitive permutation group of degree $n$ with an antiplinth $M$ for which the socle of $G/M$ is not a product of alternating groups. Then $\text{fpr}(G)\leq{}4/7$.
\end{theorem}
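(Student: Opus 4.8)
The plan is to bound $\text{fpr}(G)$ by the fixed point ratio of a suitable \emph{primitive} quotient and then quote the primitive case, Theorem~\ref{QPD}(4). In contrast with the proofs of Theorems~\ref{othm}, \ref{bthm} and \ref{mthm}, no appeal to the classification Corollary~\ref{classcor} and no reduction to a minimal normal antiplinth is needed; the only extra input beyond the given antiplinth $M$ is the primitive fixed point ratio bound. The crucial observation is that Lemma~\ref{medlem} translates directly into a statement about fixed point ratios. Indeed, for a semiprimitive $H\leq S_\Lambda$ of degree $N$ with a block system $\Delta$ of block size $s$, Definition~\ref{defm} gives $m(H)=N(1-\text{fpr}(H))$ and $m(H^\Delta)=(N/s)(1-\text{fpr}(H^\Delta))$, so the inequality $m(H)\geq s\cdot m(H^\Delta)$ of Lemma~\ref{medlem} rearranges (the degree $N$ and block size $s$ cancelling) to
\[
\text{fpr}(H)\leq\text{fpr}(H^\Delta).
\]

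First I would apply this to $G$ acting on $\Omega$ with the block system $\Delta$ of $M$-orbits. Since $M$ is the antiplinth, it is the kernel of the action on $\Delta$ by Lemma~\ref{SPPS}, so $G^\Delta\cong G/M$, and the displayed inequality gives $\text{fpr}(G)\leq\text{fpr}(G^\Delta)$. Write $Q:=G^\Delta$; by Lemma~\ref{SMAQP} the group $Q$ is quasiprimitive, hence a fortiori semiprimitive, so the displayed inequality is available for $Q$ as well.

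Next I would reduce $Q$ to a primitive action. Let $\Delta'$ be the block system of maximal blocks of $Q$ on $\Delta$ (take $\Delta'=\Delta$ if $Q$ is already primitive), so that $Q^{\Delta'}$ is primitive. The kernel of the action of $Q$ on $\Delta'$ is a normal subgroup of $Q$ that fixes each block of $\Delta'$ setwise, hence is intransitive on $\Delta$; as every nontrivial normal subgroup of the quasiprimitive group $Q$ is transitive, this kernel is trivial and $Q$ acts faithfully on $\Delta'$. Consequently $Q^{\Delta'}\cong Q\cong G/M$ is primitive with socle equal to the socle of $G/M$, which by hypothesis is not a product of alternating groups. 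Applying the displayed inequality to $Q$ with the block system $\Delta'$ gives $\text{fpr}(Q)\leq\text{fpr}(Q^{\Delta'})$, while Theorem~\ref{QPD}(4) gives $\text{fpr}(Q^{\Delta'})\leq 4/7$. Chaining the inequalities,
\[
\text{fpr}(G)\leq\text{fpr}(G^\Delta)=\text{fpr}(Q)\leq\text{fpr}(Q^{\Delta'})\leq 4/7,
\]
as required.

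The argument is short and contains no hard computation; the two points needing care are the translation of Lemma~\ref{medlem} into the fixed point ratio inequality (verifying that the block size cancels) and the verification that the maximal block system $\Delta'$ yields a \emph{faithful} primitive action whose socle is still that of $G/M$. The latter is where quasiprimitivity of $Q=G^\Delta$, supplied by Lemma~\ref{SMAQP}, does the essential work: it forces the kernel on $\Delta'$ to be trivial, which is exactly what lets us transfer the hypothesis on the socle of $G/M$ to the primitive group $Q^{\Delta'}$ to which Theorem~\ref{QPD}(4) applies.
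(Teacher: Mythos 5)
Your proof is correct and follows essentially the same route as the paper's: reduce through the $M$-orbits to the quasiprimitive quotient $G/M$, then through a maximal block system to a faithful primitive quotient with the same socle, and finish with the primitive bound of Theorem~\ref{QPD}(4). The only difference is cosmetic: you obtain $\text{fpr}(G)\leq\text{fpr}(G^\Delta)$ by rearranging Lemma~\ref{medlem} via the identity $m(G)=n(1-\text{fpr}(G))$, whereas the paper carries out the equivalent fixed-point count across blocks directly.
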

\begin{proof}
If $G$ is primitive, the result follows from Theorem \ref{gmf} with $M=1$. Hence we may assume that $G$ is not primitive.

Suppose $G$ is quasiprimitive. Then, all nontrivial normal subgroups of $G$ are transitive, and hence $M=1$. This implies $G\cong{}G/M$, and so the socle of $G$ is not a product of alternating groups. As $G$ is  not primitive, it has a nontrivial system of imprimitivity, and we may choose a maximal such system $\Delta$. Let $r=|\Delta|$. As $G$ is quasiprimitive, it acts faithfully on $\Delta$. Hence $G\cong{}G^\Delta$, and in particular $G$ and $G^\Delta$ have isomorphic socles. In particular, the socle of $G^\Delta$ is not a product of alternating groups. As $\Delta$ is a maximal system of imprimitivity, $G^\Delta$ is primitive, and so by Theorem \ref{gmf} we have that $\text{fpr}(G^\Delta)\leq{}4/7$. Let $1\ne{}g\in{}G$. Then since $\text{fpr}(G^\Delta)\leq{}4/7$, $g^\Delta$ fixes at most $4r/7$ points in $\Delta$. As each block in $\Delta$ has $n/r$ points in it, we get that $g$ fixes at most $(4r/7)(n/r)=4n/7$ points in $\Omega$, as required. Hence we may suppose that $G$ is not quasiprimitive.

As $G$ is not quasiprimitive, it has nontrivial intransitive normal subgroups, and in particular $M$ is nontrivial. Let $|M|=m$. Since $G$ is semiprimitive and $M$ is intransitive, $M$ is semiregular. Let $\Sigma$ be the set of $M$-orbits in $\Omega$, and let $s=|\Sigma|$. Since $M$ is semiregular, $s=n/m$. As $M$ is a maximal intransitive normal subgroup of the semiprimitive group $G$, we have by Lemma \ref{SPPS} that $G^\Sigma\cong{}G/M$, and by Lemma \ref{SMAQP} that $G^\Sigma$ is quasiprimitive. From the first part of the proof, we conclude that $\text{fpr}(G^\Sigma)\leq{}4/7$. Let $1\ne{}g\in{}G$. If $g$ is in $M$, then since $M$ is semiregular, $g$ moves every point in $\Omega$, and so has no fixed points. If $g$ is not in $M$, then since $G^\Sigma\cong{}G/M$, we have that $g^\Sigma$ is a nontrivial element of $G^\Sigma$. Since $\text{fpr}(G^\Sigma)\leq{}4/7$, we conclude that $g^\Sigma$ fixes at most $4s/7$ elements of $\Sigma$. Each element of $\Sigma$ is a set of $m=n/s$ elements from $\Omega$, and hence $g$ fixes at most $(4s/7)(n/s)=4n/7$ points, as required.\end{proof}

\subsection{Chief Length}

Recall from Definition \ref{defc} that the \textit{chief length} $l(G)$ of a group $G$ is the length of a chief series for $G$, and that all chief series for a given finite group have the same length.

\begin{theorem}\label{lthm}
Let $G\leq{}S_\Omega$ be a semiprimitive group of degree $n$. Then the chief length $l(G)$ of $G$ satisfies $l(G)\leq{}2\log_2n.$
\end{theorem}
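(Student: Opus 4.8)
The plan is to exploit the additivity of chief length under normal subgroups together with the bound for primitive groups already recorded in part $(5)$ of Theorem \ref{QPD}. The key bookkeeping fact is that if $N\trianglelefteq G$, then any chief series of $G$ may be refined so as to pass through $N$; hence $l(G)=l(G/N)+c$, where $c$ is the number of chief factors of $G$ lying below $N$. Since every chief factor has order at least $2$, we have $c\leq\log_2|N|$, and in particular $c=1$ when $N$ is a \emph{minimal} normal subgroup of $G$. The argument then proceeds by induction on the degree $n$.

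First I would dispose of the quasiprimitive case, which supplies both the base of the induction and the input for the reduction step. If $G$ is primitive, then part $(5)$ of Theorem \ref{QPD} gives $l(G)\leq 2\log_2 n$ directly; note that this applies to all primitive groups, including $A_\Omega$ and $S_\Omega$. If $G$ is quasiprimitive but imprimitive, choose a maximal nontrivial $G$-invariant partition $\Delta$, so that $G^\Delta$ is primitive of degree $r\leq n$. The kernel $G_{(\Delta)}$ is an intransitive normal subgroup, hence trivial by quasiprimitivity, so $G\cong G^\Delta$ and $l(G)=l(G^\Delta)\leq 2\log_2 r\leq 2\log_2 n$.

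For the inductive step, suppose $G$ is not quasiprimitive. Then $G$ has a nontrivial intransitive normal subgroup $L$, which is semiregular by semiprimitivity. Choosing a minimal normal subgroup $N$ of $G$ with $N\leq L$, the $N$-orbits refine the several orbits of $L$, so $N$ is itself intransitive, with $|N|\geq 2$. Setting $\overline\Omega:=\Omega/N$ and $\overline G:=G/N$, Lemma \ref{SPPS} shows that $\overline G$ acts semiprimitively on $\overline\Omega$ with kernel $N$, so that $G^{\overline\Omega}\cong\overline G$ has degree $n/|N|<n$. By the inductive hypothesis $l(\overline G)\leq 2\log_2(n/|N|)$, and since $N$ is minimal normal in $G$ the additivity formula gives $l(G)=l(\overline G)+1$. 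Hence $l(G)\leq 2\log_2(n/|N|)+1=2\log_2 n-2\log_2|N|+1\leq 2\log_2 n$, where the last inequality uses $|N|\geq 2$, so that $2\log_2|N|\geq 2\geq 1$.

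The point requiring the most care is the verification that a non-quasiprimitive semiprimitive group always contains a \emph{semiregular minimal} normal subgroup $N$, and that the single chief factor it contributes is exactly offset by the factor of $|N|\geq 2$ removed from the degree; the remainder is routine once the Jordan--H\"older invariance of Definition \ref{defc} is in hand. It is worth noting that, unlike the proofs of the order, base-size, and minimal-degree bounds, this argument does not require the classification in Corollary \ref{classcor}: because part $(5)$ of Theorem \ref{QPD} already holds for $A_\Omega$ and $S_\Omega$, the cases with $G^\Delta\geq A_\Delta$ present no obstruction, and the additivity of chief length absorbs the contribution of each minimal normal layer automatically.
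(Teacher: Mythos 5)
Your proof is correct. It rests on the same two ingredients as the paper's argument: the additivity of chief length across a normal subgroup, and the chief-length bound for the (quasi)primitive quotient on the orbit set. The packaging differs. The paper quotients once by an antiplinth $M$, observes that a chief series passing through $M$ has at most $\log_2|M|$ terms below $M$, and adds this to $l(G^\Delta)\leq 2\log_2 r$ to obtain $2\log_2 r+\log_2 m\leq 2\log_2(rm)=2\log_2 n$; you instead induct on the degree, peeling off one semiregular minimal normal subgroup at a time, each contributing exactly one chief factor while dividing the degree by at least $2$. The two bookkeeping schemes are numerically equivalent (there are at most $\log_2 m$ such layers inside any antiplinth), so neither yields a sharper constant. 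One point where your version is actually more careful: part $(5)$ of Theorem \ref{QPD} is stated only for primitive groups, and the paper applies it directly to the quasiprimitive group $G^\Delta$; you supply the missing reduction by passing to a maximal block system, on which a quasiprimitive imprimitive group acts faithfully and primitively. Your closing observation that the classification in Theorem \ref{classcor} is not needed for this bound is also accurate and consistent with the paper, whose proof of this part likewise makes no use of it.
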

\begin{proof}
Let $M$ be an antiplinth of $G$, and let $\Delta$ be the set of $M$-orbits. Let $m:=|M|$ and $r:=|\Delta|$. Without loss of generality, we may consider a chief series for $G$ passing through $M$. As each factor has order at least $2$, there are at most $\log_2m$ terms in the chief series up to and including $M$. By Lemma \ref{SMAQP}, $G^\Delta\cong{}G/M$ is quasiprimitive, and so by Theorem \ref{BQG} we have that $l(G^\Delta)\leq{}2\log_2r$. Hence there are at most $2\log_2r$ terms in the chief series from $M$ to $G$. Now, $$l(G)\leq{}l(G^\Delta)+\log_2m\leq{}2\log_2r+\log_2m<2\log_2(rm)=2\log_2n,$$ as required.\end{proof}

\section*{Acknowledgments}
The first  author gratefully acknowledges the support of the Australian Research Council Discovery Grant DE160100081.  The second author gratefully acknowledges the support of the Australian Research Council Discovery Grant DP160102323. The third author thanks the Australian Government for the support of a Research Training Program grant.

\begin{bibdiv}
\begin{biblist}

\bib{PrimB}{article}{
      author={Babai, L\'aszl\'o},
       title={On the order of uniprimitive permutation groups},
        date={1981},
     journal={Ann. of Math. (2)},
      volume={113},
      number={3},
       pages={553\ndash 568},
}

\bib{JB}{article}{
      author={Bamberg, John},
       title={Bounds and quotient actions of innately transitive groups},
        date={2005},
     journal={J. Aust. Math. Soc.},
      volume={79},
      number={1},
       pages={95\ndash 112},
}

\bib{BM}{article}{
      author={Bereczky, \'Aron},
      author={Mar\'oti, Attila},
       title={On groups with every normal subgroup transitive or semiregular},
        date={2008},
     journal={J. Algebra},
      volume={319},
      number={4},
       pages={1733\ndash 1751},
}

\bib{MAGMA}{article}{
      author={Bosma, Wieb},
      author={Cannon, John},
      author={Playoust, Catherine},
       title={The {M}agma algebra system. {I}. {T}he user language},
        date={1997},
        ISSN={0747-7171},
     journal={Journal of Symbolic Computation},
      volume={24},
      number={3-4},
       pages={235\ndash 265},
         url={http://dx.doi.org/10.1006/jsco.1996.0125},
        note={Computational algebra and number theory (London, 1993)},
}

\bib{CSPGGR}{article}{
      author={Giudici, Michael},
      author={Morgan, Luke},
       title={A class of semiprimitive groups that are graph-restrictive},
        date={2014},
     journal={Bull. Lond. Math. Soc.},
      volume={46},
      number={6},
       pages={1226\ndash 1236},
}

\bib{LSGTW}{article}{
      author={Giudici, Michael},
      author={Morgan, Luke},
       title={On locally semiprimitive graphs and a theorem of {W}eiss},
        date={2015},
     journal={J. Algebra},
      volume={427},
       pages={104\ndash 117},
}

\bib{GM}{article}{
      author={Giudici, Michael},
      author={Morgan, Luke},
       title={{A theory of semiprimitive groups}},
        date={2018},
     journal={J. Algebra},
      number={503},
       pages={146\ndash 185},
}

\bib{Me}{article}{
      author={Glasby, S.~P.},
      author={Praeger, Cheryl~E.},
      author={Rosa, Kyle},
      author={Verret, Gabriel},
       title={Bounding the composition length of primitive permutation groups
  and completely reducible linear groups},
     journal={To appear in the Journal of the London Mathematical Society,
  https://doi.org/10.1112/jlms.12138},
}

\bib{gmf}{article}{
      author={Guralnick, Robert},
      author={Magaard, Kay},
       title={On the minimal degree of a primitive permutation group},
        date={1998},
        ISSN={0021-8693},
     journal={Journal of Algebra},
      volume={207},
      number={1},
       pages={127\ndash 145},
         url={http://dx.doi.org/10.1006/jabr.1998.7451},
}

\bib{KPS}{article}{
      author={Kempe, Julia},
      author={Pyber, L\'aszl\'o},
      author={Shalev, Aner},
       title={Permutation groups, minimal degrees and quantum computing},
        date={2007},
     journal={Groups Geom. Dyn.},
      volume={1},
      number={4},
       pages={553\ndash 584},
}

\bib{bettermed}{article}{
      author={Liebeck, Martin~W.},
      author={Saxl, Jan},
       title={Minimal degrees of primitive permutation groups, with an
  application to monodromy groups of covers of {R}iemann surfaces},
        date={1991},
        ISSN={0024-6115},
     journal={Proceedings of the London Mathematical Society. Third Series},
      volume={63},
      number={2},
       pages={266\ndash 314},
         url={http://dx.doi.org/10.1112/plms/s3-63.2.266},
}

\bib{NM}{article}{
      author={Menezes, Nina~E.},
       title={Random generation and chief length of finite groups},
        date={2013},
     journal={PhD Thesis, University of St Andrews},
}

\bib{OnanScott}{article}{
      author={Praeger, Cheryl~E.},
       title={An {O}'{N}an-{S}cott theorem for finite quasiprimitive
  permutation groups and an application to {$2$}-arc transitive graphs},
        date={1993},
     journal={J. London Math. Soc. (2)},
      volume={47},
      number={2},
       pages={227\ndash 239},
}

\bib{PrimO}{article}{
      author={Praeger, Cheryl~E.},
      author={Saxl, Jan},
       title={On the orders of primitive permutation groups},
        date={1980},
     journal={Bull. London Math. Soc.},
      volume={12},
      number={4},
       pages={303\ndash 307},
}

\bib{PraegerSchneider}{book}{
      author={Praeger, Cheryl~E.},
      author={Schneider, Csaba},
       title={Permutation groups and cartesian decompositions},
      series={London Mathematical Society Lecture Note Series},
   publisher={Cambridge University Press},
        date={2018},
      volume={449},
}

\bib{BQG}{article}{
      author={Praeger, Cheryl~E.},
      author={Shalev, Aner},
       title={Bounds on finite quasiprimitive permutation groups},
        date={2001},
        ISSN={1446-7887},
     journal={Journal of the Australian Mathematical Society},
      volume={71},
      number={2},
       pages={243\ndash 258},
         url={http://dx.doi.org/10.1017/S1446788700002895},
        note={Special issue on group theory},
}

\bib{Rotman}{book}{
      author={Rotman, Joseph~J.},
       title={A first course in abstract algebra},
   publisher={Prentice Hall, Inc., Upper Saddle River, NJ},
        date={1996},
}

\bib{OGRPG}{article}{
      author={Spiga, Pablo},
      author={Verret, Gabriel},
       title={On intransitive graph-restrictive permutation groups},
        date={2014},
     journal={J. Algebraic Combin.},
      volume={40},
      number={1},
       pages={179\ndash 185},
}

\bib{mrd2}{article}{
      author={Wagner, Ascher},
       title={The faithful linear representation of least degree of {$S_{n}$}
  and {$A_{n}$} over a field of characteristic {$2.$}},
        date={1976},
        ISSN={0025-5874},
     journal={Mathematische Zeitschrift},
      volume={151},
      number={2},
       pages={127\ndash 137},
}

\bib{mrd1}{article}{
      author={Wagner, Ascher},
       title={The faithful linear representations of least degree of {$S_{n}$}
  and {$A_{n}$} over a field of odd characteristic},
        date={1977},
        ISSN={0025-5874},
     journal={Mathematische Zeitschrift},
      volume={154},
      number={2},
       pages={103\ndash 114},
}

\bib{Wie}{book}{
      author={Wielandt, Helmut},
       title={Finite permutation groups},
   publisher={Academic Press, New York-London},
        date={1964},
}

\bib{PMN}{book}{
      author={Wielandt, Helmut},
       title={Mathematische {W}erke/{M}athematical works. {V}ol. 1},
   publisher={Walter de Gruyter \& Co., Berlin},
        date={1994},
        note={Group theory, With essays on some of Wielandt's works by G.
  Betsch, B. Hartley, I. M. Isaacs, O. H. Kegel and P. M. Neumann, Edited and
  with a preface by Bertram Huppert and Hans Schneider},
}

\bib{TFSG}{book}{
      author={Wilson, Robert~A.},
       title={The finite simple groups},
      series={Graduate Texts in Mathematics},
   publisher={Springer-Verlag London, Ltd., London},
        date={2009},
      volume={251},
        ISBN={978-1-84800-987-5},
         url={http://dx.doi.org/10.1007/978-1-84800-988-2},
}

\end{biblist}
\end{bibdiv}

\end{document}